\definecolor{Turk}{rgb}{0,0.7,0.4}
\theoremstyle{definition}
\newtheorem{theorem}{Theorem}[section]
\newtheorem{corollary}[theorem]{Corollary}
\newtheorem{lem}[theorem]{Lemma}
\newtheorem{prop}[theorem]{Proposition}
\newtheorem{defi}[theorem]{Definition}
\newtheorem{assump}[theorem]{Assumption}
\newtheorem{remark}[theorem]{Remark}
\numberwithin{equation}{section}
\newcommand{\sym}[1]{(#1)_{\mbox{\tiny{sym}}}}
\newcommand{\skw}[1]{(#1)_{\mbox{\tiny{skw}}}}
\newcommand{\dom}[1]{\mbox{dom}(#1)}
\newcommand{\divge}[1]{\mbox{div}(#1)}
\newcommand{\tU}{\tilde{\pmb U}}
\newcommand\superimpose[2]{%
	\ooalign{$\m@th#1\@firstoftwo#2$\cr
		\hidewidth$\m@th#1\@secondoftwo#2$\hidewidth}%
}
\newcommand{\threedotsord}[0]{\mathpalette\superimpose{{\mathop:}{\cdot}}} 
\newcommand{\threedotsbin}[0]{\mathbin{\threedotsord}}     
\title{Energy-variational solutions for geodynamical two-phase flows\\ 
-- From logarithmic to double-obstacle potentials by variational convergence}
\author{Fan Cheng\thanks{Freie Universit\"at Berlin, Institute of Mathematics, Arnimallee 9, 14195 Berlin, Germany, e-mail: fan.cheng@fu-berlin.de}, Robert Lasarzik\thanks{Weierstrass Institute for Applied Analysis and Stochastics, Anton-Wilhelm-Amo-Str. 39, 10117 Berlin, Germany, e-mail: robert.lasarzik@wias-berlin.de}, and Marita Thomas\thanks{Freie Universit\"at Berlin, Institute of Mathematics, Arnimallee 9, 14195 Berlin, Germany, e-mail: marita.thomas@fu-berlin.de}}
\date{\today}
\begin{document}

\maketitle
\paragraph{Abstract.} 
In [Cheng, Lasarzik, Thomas 2025 ARXIV-Preprint 2509.25508], we studied a Cahn--Hilliard two-phase model describing the flow of two viscoelastoplastic fluids in the framework of dissipative solutions using a logarithmic potential for the phase-field variable. This choice of potential has the effect that the fluid mixture cannot fully separate into two pure phases. The notion of dissipative solutions is based on a relative energy-dissipation inequality featuring a suitable regularity weight. In this way, this is a very weak solution concept. In the present work, we study the well-posedness of the geodynamical two-phase flow in the notion of energy-variational solutions. They feature an additional scalar energy variable that majorizes the system energy along solutions and they are further characterized by a variational inequality that combines an energy-dissipation estimate with the weak formulation of the system adding an error term that accounts for the mismatch between the energy variable and the system energy multiplied by a suitable regularity weight. We give a comparison of these two concepts. We further study different phase-field potentials for the geodynamical two-phase flow model. In particular, we address the variational limit from a potential with a logarithmic contribution to a double-obstacle potential, then also allowing for the emergence of pure phases. This study underlines that, thanks to its structure, the energy-variational solution is better suited for variational convergence methods than the dissipative solution. 
\paragraph{{\bf Keywords:}}
energy-variational solutions,
viscoelastoplastic fluid, 
Cahn-Hilliard equation with double-obstacle potential,
Cahn-Hilliard equation with logarithmic potential, 
non-smooth potential, 
vanishing stress diffusion

\paragraph{{\bf MSC2020:}} 
    35A15, 
    35A35, 
    35D99, 
    35M86, 
    35Q86, 
    76Txx, 
    76M30. 
    
\clearpage
\section{Introduction}
In this paper, we study a system of partial differential equations modeling the two-phase flow of an incompressible mixture of two viscoelastoplastic fluids with a diffuse interface, which arises in geodynamics. In a time interval $(0,T)$, where $T\in(0,\infty]$, and a bounded $C^{2}$-domain $\Omega\subseteq\mathbb{R}^3$, the system is given as:
\begin{subequations}
	\label{sys:two phase}
	\begin{align}
		\partial_{t} (\rho v)+\divge{v\otimes(\rho v+J)}-\divge{\mathbb{T}}&=f-\varepsilon\divge{\nabla\varphi\otimes\nabla\varphi}&\text{ in }\Omega\times(0,T), 
        \label{sys: momentum}\\
		\divge{v}&=0&\text{ in }\Omega\times(0,T),
        \label{sys: incompressible}\\
        \mathbb{T}&=\eta(\varphi)S+2\nu(\varphi)\sym{\nabla v}-p\mathbb{I}&\text{ in }\Omega\times(0,T),
        \label{sys: cauchy stress tensor}\\
		\overset{\triangledown}{S}+\partial {P}(\varphi;S)-\gamma\Delta S&\ni\eta(\varphi)\sym{\nabla v}&\text{ in }\Omega\times(0,T), 
        \label{sys: stress}\\
		\partial_{t}\varphi+v\cdot\nabla\varphi&=\divge{m(\varphi)\nabla\mu)}&\text{ in }\Omega\times(0,T), 
        \label{sys: first CHE}\\
		\mu&=\frac{1}{\varepsilon}W^{\prime}(\varphi)-\varepsilon\Delta\varphi&\text{ in }\Omega\times(0,T). 
        \label{sys: second CHE}		
	\end{align}
The system is complemented by the following boundary and initial conditions 
	\label{BC and ID for TP}
	\begin{alignat}{3}
		v|_{\partial\Omega}&=0&&\mbox{ on }\partial\Omega\times(0,T),
        \\
        \gamma \vec{n}\cdot\nabla S|_{\partial\Omega}&=0&&\mbox{ on }\partial\Omega\times(0,T),
        \\\vec{n}\cdot\nabla\varphi|_{\partial\Omega}=\vec{n}\cdot\nabla\mu|_{\partial\Omega}&=0&&\mbox{ on }\partial\Omega\times(0,T),
        \\
		(v,S,\varphi)|_{t=0}&=(v_{0},S_{0},\varphi_{0})\quad&&\mbox{ in }\Omega,
	\end{alignat}
where $\vec{n}$ denotes the outward unit normal vector to the boundary $\partial\Omega$ of the domain $\Omega$.      
\end{subequations}

Equation~\eqref{sys: momentum} describes the momentum balance of the fluids, formulated in terms of the volume-averaged Eulerian velocity field $v:[0,T]\times\Omega\to\mathbb{R}^{3}$,
the mass density $\rho:[0,T]\times\Omega\to\mathbb{R}$, the volume-averaged mass flux $J:[0,T]\times\Omega\to\mathbb{R}^{3}$, the Cauchy stress tensor $\mathbb{T}:[0,T]\times\Omega\to\mathbb{R}^{3\times 3}$, and an external force $f:[0,T]\times\Omega\to\mathbb{R}^{3}$. The mass flux $J$, which is given as $J:=-\frac{\rho_{2}-\rho_{1}}{2}m(\varphi)\nabla\mu$, arises from the mismatch of the  mass densities $\rho_{1}$ and $\rho_{2}$ of the pure phases. Capillarity is modeled through a Korteweg-type stress contribution $\varepsilon \nabla\varphi\otimes\nabla\varphi$ with an interfacial thickness parameter $\varepsilon>0$. The velocity field is modeled as incompressible, so that the divergence-free constraint~\eqref{sys: incompressible} is imposed. The Cauchy stress tensor~\eqref{sys: cauchy stress tensor} for the viscoelastoplastic fluid consists of a spherical part and a deviatoric part. Here, $p:[0,T]\times\Omega\to\mathbb{R}$ denotes the pressure, $\sym{\nabla v} := \frac{1}{2}(\nabla v + \nabla v^{\top})$
is the strain-rate tensor which is given by the symmetric part of the velocity gradient 
and measures the relative motion of fluid particles. Moreover, 
$S:[0,T]\times\Omega\to\mathbb{R}^{3\times 3}_{\mathrm{sym,Tr}}$ is an internal stress. The functions $\nu(\varphi)$ and $\eta(\varphi)$ in \eqref{sys: cauchy stress tensor} describe the phase-dependent viscosity and elastic modulus, respectively. 

The evolution of the internal stress $S$ is governed by a Maxwell-type constitutive law~\eqref{sys: stress}. The rate is expressed in terms of the Zaremba--Jaumann objective rate
\begin{equation}\label{Zaremba Jaumann rate}
  \overset{\triangledown}{S}
  := \partial_{t}S + v\cdot\nabla S + S\,\skw{\nabla v} - \skw{\nabla v}\,S,
\end{equation}
which is widely used in geophysical modeling, see, e.g., \cite{moresi_mantle_2002,gerya_robust_2007,herrendorfer_invariant_2018}. Here, the spin tensor
$\skw{\nabla v} := \frac{1}{2}(\nabla v - \nabla v^{\top})$ is the skew-symmetric part of the velocity gradient. Plastic effects are incorporated through an additional term $\partial P(\varphi;S)$ in the evolution law for $S$, where $P$ is a convex plastic potential density. A typical choice, motivated by geodynamical applications to the plastic deformation of lithospheric plates, is
\begin{equation}\label{def:P}
  P(\varphi;S):= \begin{cases}
    \dfrac{a(\varphi)}{2}\lvert S\rvert^{2}  & \text{if } \lvert S\rvert \leq \sigma_{\mathrm{yield}},\\
    \infty & \text{if } \lvert S\rvert > \sigma_{\mathrm{yield}}.
  \end{cases}
\end{equation}
Here, $a(\varphi)>0$ is material parameter, and $\sigma_{\mathrm{yield}}$ denotes the corresponding yield stress, determining the onset of plastic flow (see, e.g., \cite{moresi_mantle_2002,gerya_robust_2007}). For fixed $\varphi$, the subdifferential of the convex function \(P(\varphi;\cdot)\) is given by
\begin{equation}
  \partial P(\varphi;S) := \left\{ \xi \in \mathbb{R}^{3\times 3}_{\mathrm{sym,Tr}} :
  \langle \xi,\tilde{S}-S\rangle_{\mathbb{R}^{3\times 3}} + P(\varphi;S) \leq P(\varphi;\tilde{S})
  \ \text{for all } \tilde{S}\in\mathbb{R}^{3\times3}_{\mathrm{sym,Tr}} \right\}\,,
\end{equation}
where $\mathbb{R}^{3\times3}_{\mathrm{sym,Tr}}$ denotes the set of all symmetric, trace-free, real-valued $3\times3$-matrices. 
On the level of functionals, the associated plastic potential is
\begin{equation}
\label{def-disspot}
  \mathcal{P}(\varphi;S) := \int_{\Omega} P(\varphi;S)\,\mathrm{d}x.
\end{equation}
In order to handle the non-smooth evolution 
law of the stress tensor $S$ analytically, \eqref{sys: stress} also contains the term $-\gamma\Delta S$ with some parameter $\gamma>0$ as a regularization in the sense of stress diffusion, which ensures the equation to be parabolic, see e.g.~\cite{zbMATH07488956}. In certain applications such a stress-diffusion is also argued to be reasonable from the modeling perspective. For example,  in~\cite{stressdiffusion}, the presence of stress diffusion admits two interpretations: It can be viewed either as a consequence of a nonlocal energy storage mechanism, or as a consequence of a nonlocal entropy production mechanism.

The phase separation of the two incompressible fluids is described by a Cahn--Hilliard type system~\eqref{sys: first CHE}-\eqref{sys: second CHE} for the phase-field variable $\varphi:[0,T]\times\Omega\to\mathbb{R}$, which indicates the two phases. Physically, one expects, for $(t,x)\in[0,T]\times\Omega$,
\begin{equation}
  \varphi(t,x) 
  \begin{cases}
    =-1, & \text{pure fluid 1},\\
    \in(-1,1), & \text{mixture of fluid 1 and fluid 2},\\
    =1, & \text{pure fluid 2}.
  \end{cases}
\end{equation}
The Cahn--Hilliard type equations~\eqref{sys: first CHE}-\eqref{sys: second CHE} couple $\varphi$ to the chemical potential $\mu:[0,T]\times\Omega\to\mathbb{R}$, and involve a phase-field potential $W:\mathbb{R}\to[0,\infty]$ together with a small parameter $\varepsilon>0$ controlling the thickness of diffuse interfaces.

The  density of the mixture is assumed to depend affinely on $\varphi$ via
\begin{equation*}
  \rho(\varphi) := \frac{1-\varphi}{2}\rho_{1} + \frac{1+\varphi}{2}\rho_{2},
\end{equation*}
where $\rho_{i}>0$ is the constant density of the pure fluid $i$, $i=1,2$. With this choice, equation~\eqref{sys: first CHE} yields a modified continuity equation
\begin{equation}
  \partial_{t}\rho + \operatorname{div}(\rho v + J) = 0,
\end{equation}
so that the total mass balance is consistent with the diffusive interface description and the additional mass flux $J$.

In~\cite{arxiv:2509.25508}, we analyzed system \eqref{sys:two phase} under the assumptions of constant mobility $m(\varphi)=\mathrm{const}>0$ and a logarithmic potential. We proved global-in-time existence in the notion of dissipative solutions, which are characterized by a relative energy-dissipation estimate involving a suitable regularity weight. We established this existence result both for the system  regularized by stress-diffusion $\gamma>0$  and for the non-regularized system $\gamma=0$. 
However, the analysis in \cite{arxiv:2509.25508} relied crucially on the properties of the logarithmic potential.  In particular, 
by~\cite{zbMATH05191618}, the phase-field energy 
\begin{equation*}
    \int_{\Omega}\frac{\varepsilon}{2}|\nabla\varphi|^{2}+\varepsilon^{-1}W_{\mathrm{sg}}(\varphi)\dd{x}
\end{equation*}
is a proper, lower semicontinuous, convex functional, if $W_{\mathrm{sg}}$ is a logarithmic potential. Moreover, $W_{\mathrm{sg}}$ is smooth on its domain of definition $D$. Consequently, its subdifferential is a maximal monotone operator, single-valued on $D,$ and thus coincides with the term $-\varepsilon\Delta\varphi+\varepsilon^{-1}W_{\mathrm{sg}}^{\prime}(\varphi)$.
In addition, with the choice of logarithmic potential, the phase-field variable is constrained to take values in $(-1,1)$. These properties were exploited for the analysis in \cite{arxiv:2509.25508}.  

The total energy of system \eqref{sys:two phase} is of the form  
\begin{equation}
\begin{aligned}
	&\mathcal{E}_{\alpha}(v,S,\varphi):=\int_{\Omega}
    \frac{\rho(\varphi)}{2}\abs{v}^{2}
    +\frac{1}{2}\abs{S}^{2}
    +\frac{\varepsilon}{2}\abs{\nabla\varphi}^{2}
    +\varepsilon^{-1}W_{\mathrm{dw}}(\varphi)
    +\varepsilon^{-1}W_{\mathrm{sg},\alpha}(\varphi)\dd{x}\,,
\end{aligned}
\end{equation}
with $W_{\mathrm{dw}}$ being a smooth, regular phase-field potential, e.g.\ a  double-well potential, and $W_{\mathrm{sg},\alpha}$ being a singular phase-field potential. In~\cite{arxiv:2509.25508}, we restricted the analysis on  singular potentials of logarithmic type, i.e.\ $W_{\mathrm{sg},\alpha}$ was of the form $W_{\mathrm{sg},\alpha}(\varphi):=(1+\varphi)\ln(1+\varphi)+(1-\varphi)\ln(1-\varphi).$ This potential is smooth and singular in $\pm1.$ This has the effect that the pure phases, located in $\pm1,$ can neither be exceeded nor even reached apart from a set of zero Lebesgue-measure. To overcome this drawback, we will start the analysis in this paper with a logarithmic potential of the form 
\begin{equation*}
    W_{\mathrm{sg},\alpha}(\varphi):=\alpha\left((1+\alpha+\varphi)\ln(1+\alpha+\varphi)+(1+\alpha-\varphi)\ln(1+\alpha-\varphi)\right) 
\end{equation*}
and we will study the variational limit $\alpha \searrow 0$. 
In the static setting,  one can show by means of $\Gamma$-convergence that 
\begin{equation}
    \mathcal{E}_{\alpha}\overset{\Gamma}{\longrightarrow}\mathcal{E}\text{ as }\alpha \searrow 0\,,
\end{equation}
where the limit energy $\mathcal{E}(v,S,\varphi)$ is given as 
\begin{equation}
    \mathcal{E}(v,S,\varphi):=
    \int_{\Omega}
    \frac{\rho(\varphi)}{2}\abs{v}^{2}
    +\frac{1}{2}\abs{S}^{2}
    +\frac{\varepsilon}{2}\abs{\nabla\varphi}^{2}
    +\varepsilon^{-1}W_{\mathrm{dw}}(\varphi)\dd{x}
    +\mathcal{I}_{[-1,1]}(\varphi)\,.
\end{equation}
Herein, the energy term $\mathcal{I}_{[-1,1]}$ is the double-obstacle potential, i.e.\ 
\begin{equation}
    \mathcal{I}_{[-1,1]}(\varphi):=
    \begin{cases}
        0&\text{ }\varphi\in[-1,1]\text{ a.e.\ in }\Omega,\\
        \infty&\text{ otherwise.}
    \end{cases}
\end{equation}
This singular, non-smooth phase-field energy term now allows that the pure phases, located in $\pm1,$ can be reached, but not exceeded. 

The single-phase system was introduced in~\cite{moresi_mantle_2002} and first analyzed in~\cite{zbMATH07488956}, where the  stress evolution equation~\eqref{sys: stress}, featuring the nonlinear Zaremba--Jaumann derivative~\eqref{Zaremba Jaumann rate} of $S$ and a multi-valued term induced by the non-smooth dissipation potential $\mathcal{P}$ as in \eqref{def-disspot}, was regularized by a stress diffusion term $-\gamma\Delta S$ with $\gamma>0$ for analytical reasons, cf.~\cite{zbMATH07488956,zbMATH07600529}. 
In order to remain closer to geoscientific models, see e.g.~\cite{moresi_mantle_2002}, we will avoid this regularization or rather consider the singular limit $\gamma\to0$ of the stress regularization. 
In~\cite{zbMATH07488956}, the authors proved the existence of generalized solutions for the single-phase model with stress diffusion, whereas, in~\cite{zbMATH07600529}, the weak-strong uniqueness of these solutions was considered as well as the limit of vanishing stress diffusion for the single-phase model. 
In our previous work~\cite{arxiv:2509.25508}, we introduced the two-phase model and proved the existence of dissipative solutions in the sense of Lions~\cite[Sec.~4.4]{zbMATH00928933}. 

In fact, the multiphase structure leads to additional analytical difficulties. We refer to~\cite{zbMATH05575962,zbMATH05960942,zbMATH06210388,zbMATH06286084} for related results. In particular, in~\cite{zbMATH06210388}, a coupled Cahn--Hilliard-Navier-Stokes system with logarithmic potentials was studied, while in~\cite{zbMATH06286084}, the double-obstacle potential was approximated by a logarithmic potential, and a limit passage yields the existence of weak solutions for the double-obstacle potential, under the assumption that the mobility $m(\varphi)$ is zero in each pure phase.

In this work, we introduce the concept of energy-variational solutions to the geodynamical two-phase system~\eqref{sys:two phase} under consideration. 
Note that the solution concept studied in~\cite{zbMATH07600529} was already coined energy-variational solution, but it was rather an extended version of dissipative solutions in the sense of Lions~\cite{zbMATH00928933}. 
The main improvement of energy-variational solutions studied in the present work in comparison to dissipative solutions is that we can pass to the limit in a suitable approximating sequence without applying some Gr\"onwall-type argument, which is needed in the case of dissipative solutions. This has the advantage that the arguments for any limit passage in this notion of solution mainly rely on lower semicontinuity, $\Gamma$-convergence, and graph-convergence. This will be exemplarily demonstrated for the variational limit $\alpha \searrow 0$ when passing from the logarithmic to a double-obstacle potential in the notion of energy-variational solution for the geodynamical two-phase flow system \eqref{sys:two phase}.  
The general idea about the notion of energy-variational solution is to introduce an auxiliary variable $E:[0,T]\to\mathbb{R}$, which is an upper bound of the system energy and to introduce an error term $ \mathcal{K}[ \mathcal{E}(v,S,\varphi) - E] $ to the formulation in order to make it stable under weak convergence. Herein $\mathcal{K}$ denotes a suitable regularity weight.  Let us demonstrate this idea for the relevant terms of the considered system. The main problem in passing to the limit with a suitable approximating sequence are the last two terms in the Zaremba--Jaumann derivative~\eqref{Zaremba Jaumann rate}. With only weak convergence of the sequences $(\nabla v_{n})_n$ and $(S_{n})_n$ in $L^2(\Omega)$ at our disposal, such a product term must not converge. The main idea is to convexify this term by adding ``enough" energy to the system and using the occurring dissipation. With the choice of the regularity weight $\mathcal{K}(\tilde S) := C\| \tilde S \|^2_{L^\infty}$ for any suitable test function $\tilde S$, we will show that the associated term 
$$
S \mapsto \int_\Omega \Big(2 {\nu}_1 |(\nabla v)_{\mathrm{sym}} |^2+ (S\skw{\nabla v} - \skw{\nabla v}S):\tilde{S}\Big)\, \dd x  + \mathcal{K}(\tilde S)  \frac{1}{2}\| S\|_{L^2}^2 
$$
is lower semi-continuous with respect to the available weak convergences. For a suitable approximating sequence of solutions $(v_n,S_n,\varphi_n)_n$ with $(v_n,S_n,\varphi_n)\rightharpoonup(v,S,\varphi)$ in a suitable topology, for each $n\in\mathbb{N}$, the associated energy 
$\mathcal{E}(v_n(t),S_n(t),\varphi_n(t))$ is a 
function of bounded variation in time. This can be observed from the fact that solutions satisfy for all $t\in[0,T]$ an energy-dissipation estimate of the form 
\begin{equation}\label{smooth energy balance}
\begin{aligned}
   &\mathcal{E}(v_n(t),S_n(t),\varphi_n(t))
   +\int_{0}^{t}\int_{\Omega}
   2\nu(\varphi_n)\abs{\sym{\nabla v_n}}^{2}
   +\gamma\abs{\nabla S_n}^{2}
	+2 P(\varphi_n;S_n)
   +m(\varphi_n)\abs{\nabla \mu_n}^{2} \,\dd{x}\dd{\tau}
   \\
   &\leq\mathcal{E}(v_n(0),S_n(0),\varphi_n(0))
   +\int_{0}^{t}\langle f, v_{n}\rangle_{H^{1}}\dd{\tau}\,,
 \end{aligned}
 \end{equation}
which also can be shown to provide a uniform bound on the sequence of energy functions. Hence, for the uniformly bounded sequence of monotone real functions, by Helly’s selection principle, we can extract a subsequence that converges pointwise in time to an auxiliary energy function $E:[0,T]\to\mathbb{R}$. The defect $ E(t)-\mathcal{E}(v(t),S(t),\varphi(t))\geq 0$ quantifies the difference between the system  energy, which is only lower semicontinuous, and the actual limit pointwise in time of the energy function $E$. This idea allows us to pass to the limit in the notion of energy-variational solutions.
This setting is further explained in Section~\ref{intro:general pde setting}.  
Energy-variational solutions have been introduced for incompressible fluid dynamics in~\cite{lasarzik}, in~\cite{zbMATH07834723} for general hyperbolic conservation laws, and in~\cite{ALREVS} for general damped Hamiltonian systems with applications to viscoelastic fluid dynamics. For several systems the equivalence to measure-valued solutions could be shown, for example, for the isentropic Euler system~\cite{zbMATH07834723}, two-phase Navier--Stokes equations, polyconvex elastodynamics~\cite{envarMeas} the Ericksen--Leslie system for liquid crystals~\cite{zbMATH07680741,envarMeas}. In the case of this system, the limit of a suitable structure-inheriting scheme could be shown to converge to an energy-variational solution with a different regularity weight than the one for which the equivalence to measure-valued solutions could be shown~\cite{zbMATH07680741}. This shows the flexibility of this concept for the identification of limits. 

In the current work, we use the novel concept of energy-variational solutions for the considered system in order to handle a non-constant mobility as well as the variational limit from a logarithmic to a double-obstacle potential in the Cahn--Hilliard equation. Here we show the advantage of the concept of energy-variational solutions, as it is a finer notion than the dissipative solutions. Indeed, we show that an energy-variational solution is always a dissipative solution. Moreover, energy-variational solutions fulfill the so-called semi-flow property, which is a natural property for a solution and it also allows to use some improved selection criteria as in~\cite{lasarzik}. 
In this way, we develop a more robust and  finer solution concept for 
the considered geophysical system in order to overcome the restriction to constant mobilities and logarithmic potentials imposed in~\cite{arxiv:2509.25508}. 
This results in a system for geodynamical two-phase flows admitting  the 
double-obstacle potential that is non-smooth, preserves the uniform bound on phase-field variable $\varphi,$ but allows it to attain the pure phases, such that $\varphi\in[-1,1]$ a.e.\ in $\Omega\times[0,T)$. 

The paper is structured as follows: After introducing general notations and preliminaries in Section \ref{section preliminary}, we give the generalized formulation of energy-variational solutions in Section~\ref{intro:general pde setting}, provide general  results for this notion of solution and compare it with dissipative solutions. Subsequently, in Section \ref{Sec:ENVarSol-Geo}, we introduce energy-variational solutions specifically for system \eqref{sys:two phase} and discuss, in Section \ref{Sec:DissipSol-Geo}, their connection to the dissipative solutions studied in \cite{arxiv:2509.25508}. 
In Section \ref{Sec:singular}, we prove the existence of energy-variational solutions for system \eqref{sys:two phase} with the logarithmic phase-field potential. Our proof is based on a time-discrete scheme applied to system \eqref{sys:two phase}, cf.\  Section \ref{time discretization}, and, by compactness arguments, the existence of energy-variational solutions is first deduced in presence of stress diffusion $\gamma>0$ in Section \ref{exitence evs regulairzed system}. Invoking the arguments of  \cite[Section 5]{arxiv:2509.25508} we pass to the limit $\gamma\to0$ in Section \ref{exitence evs non-regulairzed system} and obtain the existence of energy-variational solutions for the system without stress diffusion. Finally, in Section \ref{sec:5}, we perform the variational limit from the logarithmic phase-field potential to a double-obstacle potential in the notion of energy-variational solution for system \eqref{sys:two phase} in the non-regularized setting $\gamma=0$.

\section{General notations, preliminaries and assumptions}\label{section preliminary}
In this section, we fix the notation that will be used throughout this work and recall some useful results that will be applied for our analysis. 
\subsection{General notations}
Let $a=(a_j)_{j=1}^{3},b=(b_j)_{j=1}^{3}\in\mathbb{R}^3$ be two vectors, then their inner product is written as $a\cdot b:=a_{j}b_{j}$, and their tensor product is written as $a\otimes b=(a_{j}b_{k})_{j,k=1}^{3}$.  Similarly, let $A=(A_{jk})_{j,k=1}^{3},B=(B_{jk})_{j,k=1}^{3}\in\mathbb{R}^{3\times3}$ be two second-order tensors, the tensor inner product is written as $A:B=A_{jk}B_{jk}$. Besides, for two third-order tensors $C=(C_{jkl})_{j,k,l=1}^{3},D=(D_{jkl})_{j,k,l=1}^{3}\in\mathbb{R}^{3\times3\times3}$, we denote the inner product by $C\threedotsbin D=C_{jkl}D_{jkl}$. The tensor product between a second-order tensor $A\in\mathbb{R}^{3\times3}$ and a vector $a\in\mathbb{R}^{3}$ gives a third-order tensor and it is defined as $(A\otimes a)_{jkl}=(A_{jk}a_{l})_{j,k,l=1}^{3}$. We write the transpose and trace of a matrix $A\in\mathbb{R}^{3\times3}$ in the usual way that $A^{\top}$ and $\mbox{Tr}A$. Moreover, we set the space of the symmetric and trace-free second order tensors as
\begin{align}
    \mathbb{R}_{\mathrm{sym,Tr}}^{3\times3}
    :=\left\{A\in\mathbb{R}^{3\times3}:A=A^{\top},\mbox{Tr}A=0\right\}.
\end{align} 

The point $(x,t)\in\Omega\times(0,T)$ is defined by the spatial variable $x\in\Omega$ and the time variable $t\in(0,T)$. Thus, we write the partial time derivative and partial spatial derivative of a (sufficiently regular) function $u:\Omega\times(0,t)\to\mathbb{R}$ as $\partial_{t} u$ and $\partial_{x_{i}} u$, where $i=1,2,3$. Moreover, $\nabla u$ and $\Delta u$ denote the gradient and Laplace operator of $u$ with respect to the spatial variable. The symmetrized and skew-symmetrized part of $\nabla v$ of a vector field $v:\Omega\to\mathbb{R}^{3}$ is given by 
\begin{align}
	\sym{\nabla v}:=\frac{1}{2}\left(\nabla v+\nabla v^{\top}\right)
	\mbox{ and }
	\skw{\nabla v}:=\frac{1}{2}\left(\nabla v-\nabla v^{\top}\right).
\end{align}
We also write $\divge{v}=\sum_{i=1}^3\partial_{x_{i}} v_{i}=\partial_{x_{i}} v_{i}$ using the Einstein summation convention as the divergence of vector field $v$. Similarly, for a second-order tensor $S$, the divergence is defined by $\divge{S}=\partial_{x_{k}} S_{jk}$.

~\\
\textbf{Function spaces.} Let $X$ be a Banach space with norm $\lVert\cdot\rVert_{X}$ and dual space $X^{\prime}$. The same notation is used also for $X^{3}$ and $X^{3\times3}$. When the dimension is clear, we simply write $X$ instead of $X^{3}$ or $X^{3\times3}$. We use $\langle x^{\prime},x\rangle_{X}$ to denote the duality pairing of $x^{\prime}\in X^{\prime}$ and $x\in X$. 

The space $C^{\infty}(\Omega)$ denotes the class of smooth functions in $\Omega$ and the space $C_{0}^{\infty}(\Omega)$ denotes the class of smooth functions with compact support in $\Omega$. The Lebesgue spaces and Sobolev spaces are denoted as $L^p(\Omega)$ and $W^{k,p}(\Omega)$ for $p\in[1,\infty]$ and $k\in\mathbb{N}$, in particular, for $p=2$, we write $W^{k,2}(\Omega)=H^{k}(\Omega)$. Moreover, we write $H_{0}^{1}(\Omega)$ as the space of functions in $H^1(\Omega)$ whose boundary value is zero in the trace sense and $H^{-1}(\Omega):=(H_{0}^{1}(\Omega))^{\prime}$ is the dual space.

Now let $(0,T)\subseteq\mathbb{R}$ be an interval. The space $C^{0}(0,T;X)$ consists of the class of continuous functions in time with values in the Banach space $X$. For $p\in[1,\infty]$, the corresponding Lebesgue-Bochner spaces are denoted by $L^{p}(0,T;X)$. Moreover, we write $W^{1,p}(I;X):=\left\{u\in L^{p}(0,T;X):\partial_{t}u\in L^{p}(0,T;X)\right\}$ and $H^{1}(0,T;X)=W^{1,2}(0,T;X)$. The local Lebesgue-Bochner spaces $L_{\mathrm{loc}}^{p}(0,T;\Omega)$ and $H_{\mathrm{loc}}^{1}(0,T;\Omega)$ consist of the class of functions in $L^{p}(I;X)$ and $H^{1}(I;X)$ for every compact subinterval $I\subseteq (0,T)$ respectively. 
Moreover, for a function $u$ defined on $(0,T)$ and for $s\in(0,T)$, we denote by $u(s_{\pm})$ the left- and right-hand side limits of $u$ at $s$, respectively, i.e, 
\begin{equation}
\label{lr-limit}
    u(s_{+}):=\lim_{t\searrow s}u(t)\,,\quad 
    u(s_{-}):=\lim_{t\nearrow s}u(t).
\end{equation}

For any $u\in L^{1}(\Omega)$
\begin{align}\label{mean value in Omega}
	u_{\Omega}:=\frac{1}{|\Omega|}\int_{\Omega}u\dd{x}
\end{align} 
is the mean value of $u$ in $\Omega$.

~\\
\textbf{Solenoidal vector fields and symmetric deviatoric fields.} We introduce function spaces for solenoidal (divergence-free) vector fields and symmetric, deviatoric (trace-free) fields. The corresponding classes of smooth functions on $\Omega$ are given by
\begin{subequations}
\begin{align}
	C_{0,\mathrm{div}}^{\infty}(\Omega)&:=\left\{\varphi\in C_{0}^{\infty}(\Omega)^3:\divge{\varphi}=0\text{ in }\Omega\right\},
    \\
    C_{\mathrm{sym,Tr}}^{\infty}(\bar{\Omega})&:=\left\{\psi\in C^{\infty}(\bar{\Omega})^{3\times 3}:\psi=\psi^{\top},\Tr(\psi)=0\text{ in }\Omega\right\}.
\end{align}
We further denote the spaces of smooth, time-dependent solenoidal vector fields and symmetric, deviatoric fields as
\begin{align}
	C_{0,\mathrm{div}}^{\infty}(\Omega\times I)&:=\left\{\Phi\in C_{0}^{\infty}(\Omega\times I)^3:\divge{\Phi}=0\text{ in }\Omega\times I\right\},
    \\
	C_{0,\mathrm{sym,Tr}}^{\infty}(\Omega\times I)&:=\left\{\Psi\in C_{0}^{\infty}(\Omega\times I)^{3\times3}:\Psi=\Psi^{\top},\Tr(\Psi)=0\text{ in }\Omega\times I\right\},
\end{align}
where $I\subseteq[0,\infty)$ is an interval. The corresponding Lebesgue spaces of integrable functions on $\Omega$ are defined by
\begin{align}
	L_{\mathrm{div}}^{2}(\Omega)&:=\left\{v\in L^{2}(\Omega)^{3}:\divge{v} =0\text{ in }\Omega\right\},\\
	L_{\mathrm{sym,Tr}}^{2}(\Omega)&:=\left\{S\in L^{2}(\Omega)^{3\times 3}:S=S^{\top},\Tr(S)=0\text{ in }\Omega\right\}.
\end{align}
The Sobolev spaces obtained as the closure of $C_{0,\mathrm{div}}^{\infty}(\Omega)$ and $C_{0,\mathrm{sym,Tr}}^{\infty}(\Omega)$ with respect to the $H^{1}(\Omega)$-norm are denoted by
\begin{align}
	H_{0,\mathrm{div}}^{1}(\Omega)&:=\left\{v\in H_{0}^{1}(\Omega)^{3}: \divge{v}=0\text{ in }\Omega\right\},
    \\
    H_{\mathrm{sym,Tr}}^{1}(\Omega)&:=\left\{S\in H^{1}(\Omega)^{3\times 3}:S=S^{\top},\Tr(S)=0\text{ in }\Omega\right\}.
\end{align}
\end{subequations}
Notice that all the boundary conditions are identified in the trace sense. \\
\subsection{General assumptions and further notations}
In the following, we collect and discuss the mathematical assumptions on the domain, the given data, and the material parameters.
    \begin{assump}[on the domain]\label{ASM:domain}
    We assume that $\Omega\subseteq\mathbb{R}^3$ is a bounded domain with $C^{2}$-boundary $\partial\Omega$ and write $\vec{n}$ for the outward unit normal vector.
\end{assump}

Also in view of~\eqref{smooth energy balance}, we make the following hypothesis for the non-smooth dissipation potential $\mathcal{P}$ in~\eqref{sys: stress}:
\begin{assump}[on the plastic potential]\label{ASM:Dissipation potential}
	For the plastic potential 
    \begin{equation}\label{integral form of dissipation potential}
    \begin{aligned}
        \mathcal{P}:L^{2}(\Omega)\times L_{\mathrm{sym,Tr}}^{2}(\Omega)&\to[0,\infty]\,\\
        (\varphi;S)&\mapsto\int_{\Omega}P(x,\varphi(x),S(x))\dd{x},
    \end{aligned}
    \end{equation}
    we make the following assumptions: The density $P:\Omega\times\mathbb{R}\times \mathbb{R}_{\mathrm{sym,Tr}}^{3\times 3}\to[0,\infty]$ is proper and measurable with ${P}(x,\varphi,0)=0$ for all $x \in \Omega$ and $\varphi\in\mathbb{R}$. Moreover, 
 \begin{itemize}
    \item for all $x\in\Omega$, the mapping $(y,z)\mapsto P(x,y,z)$ is lower semicontinuous,
     \item for all $(x,y) \in \Omega\times\mathbb{R}$, the mapping $z \mapsto {P}(x,y,z) $ is convex,
     \item for all $(x,z)\in \Omega\times\mathbb{R}_{\mathrm{sym,Tr}}^{3\times 3}$, the mapping $y \mapsto {P}(x,y,z) $ is continuous.
 \end{itemize}
 Besides, for fixed $\varphi\in L^{2}(\Omega),$ the convex partial subdifferential of $\mathcal{P}(\varphi;\cdot)$ in the point $S\in L_{\mathrm{sym,Tr}}^{2}(\Omega)$ is given by
\begin{equation}\label{definition of subdifferential}
	\partial\mathcal{P}(\varphi;S):=\left\{ \xi\in (L_{\mathrm{sym,Tr}}^{2}(\Omega))^{\prime}:\langle \xi,\tilde{S}-S\rangle_{L^{2}(\Omega)}+\mathcal{P}(\varphi;S)\leq\mathcal{P}(\varphi;\tilde{S})\mbox{ for all }\tilde{S}\in L_{\mathrm{sym,Tr}}^{2}(\Omega) \right\}.
\end{equation}
Notice that, by definition, $\partial\mathcal{P}(\varphi;S)=\emptyset$ if $\mathcal{P}(\varphi;S)=\infty$.

\end{assump}

Furthermore, we make the following hypotheses for the initial data and the external loading:
\begin{assump}[on the given data]\label{ASM:initial data}
	Assume that $v_{0}\in L_{\mathrm{div}}^{2}(\Omega)$, $S_{0}\in L_{\mathrm{sym,Tr}}^{2}(\Omega)$ and $f\in L_{\mathrm{loc}}^{2}([0,T);H^{-1}(\Omega)^{3})$. Moreover, assume that $\varphi_{0}\in H^{1}(\Omega)$ with $\abs{\varphi_{0}}\leq1$ almost everywhere in $\Omega$ and
	\begin{align*}
		\frac{1}{\abs{\Omega}}\int_{\Omega}\varphi_{0}\dd{x}\in(-1,1).
	\end{align*}
\end{assump}

In order to guarantee the existence of weak solutions, we make the following assumptions.

\begin{assump}[on the material parameters]\label{ASM:material paramaters}
    The dependence of the material parameters on the composition of the mixture, i.e., on $\varphi$ is assumed to be as follows:\\
	(1) The dependence of the mass density $\rho$ on the phase-field variable $\varphi$ is given by 
	\begin{align}\label{assumption on density}
		\rho(\varphi)=\frac{\rho_{1}+\rho_{2}}{2}+\frac{\rho_{2}-\rho_{1}}{2}\varphi
	\end{align}
	where the constant $\rho_{i}>0$ is the mass density of fluid $i$, $i=1,2$.
    \\
	(2) For the viscosity parameter $\nu$ and the elastic modulus $\eta$, we assume that
    $\nu\in C^{0}(\mathbb{R})$, $\eta\in C^{1}(\mathbb{R})$ and 
	\begin{align}\label{assumption on coefficients}
		0<\nu_{1}\leq\nu(\varphi)\leq \nu_{2}\,, \quad 0\leq\eta_{1}\leq\eta(\varphi)\leq\eta_{2}\,,\;\text{ and }\abs{\eta^{\prime}(\varphi)}\leq C\text{ for all }\varphi\in\mathbb{R}
	\end{align}
	for some positive constants $C,\eta_{1},\eta_{2},\nu_{1},\nu_{2}$. Herein, the constants $\eta_{i},\nu_{i}$ can be viewed as the constants associated with the pure fluid $i$, $i=1,2$.
\end{assump}
\begin{assump}[on the mobility]\label{ASM:mobility}
    For the mobility, we assume that $m\in C^{1}(\mathbb{R})$ and 
    \begin{equation}
        0<m_{1}\leq m(\varphi)\leq m_{2}\text{ for all }\varphi\in\mathbb{R}.
    \end{equation}
    for some positive constants $m_{1}$, $m_{2}$. Herein, again, the constant $m_{i}$ can be viewed as the constant associated with the pure fluid $i$, $i=1,2$.
\end{assump}
\begin{assump}[on the singular potentials] 
\label{ASM:singular potential} 
We consider a singular potential 
\begin{equation}
\label{def-Wsing}
W_\mathrm{sing}\in\{W_\mathrm{sg},I_{[-1,1]}\}
\end{equation}
with is either a smooth, singular potential $W_\mathrm{sg}$ of logarithmic type, or a non-smooth potential $I_{[-1,1]}$ of obstacle-type. Hence:  
\begin{enumerate}
\item     For the logarithmic potential $W_{\mathrm{sg}}$, we assume that $W_{\mathrm{sg}}\in C([-1,1])\cap C^{2}(-1,1)$ such that
	\begin{align}\label{assumption on singular potential}
		\lim\limits_{\varphi\to-1}W_{\mathrm{sg}}^{\prime}(\varphi)=-\infty\,,\;\lim\limits_{\varphi\to1}W_{\mathrm{sg}}^{\prime}(\varphi)=\infty\,,\;W_{\mathrm{sg}}^{\prime\prime}(\varphi)\geq-\kappa\;\mbox{ for some }\kappa\geq0.
	\end{align} 
	Moreover, we extend $W_{\mathrm{sg}}(\varphi)$ to $+\infty$ for $\varphi\in\mathbb{R}\backslash[-1,1]$. Without loss of generality, we also assume that $W_{\mathrm{sg}}(\varphi)\geq0$ for all $\varphi\in[-1,1]$.
\item The non-smooth potential $I_{[-1,1]}(\varphi)$ is given by the indicator function of  the interval $[-1,1]$, i.e.,
    \begin{equation}
    \label{def-indicator}
        I_{[-1,1]}(\varphi):=
        \begin{cases}
            0,&\varphi\in[-1,1],\\
            \infty,&\varphi\notin[-1,1].
        \end{cases}
    \end{equation}
\end{enumerate}
\end{assump}
Observe that, for all $\varphi$ the subdifferential of $W_\mathrm{sing}$ is thus given by 
\begin{equation}
\label{def-subdiff-Wsing}
\begin{split}
\partial W_\mathrm{sing}(\varphi)&\in\big\{\partial W_\mathrm{sg}(\varphi),\partial I_{[-1,1]}(\varphi)\big\}\quad\text{ with }
\\[1ex]
 \partial W_\mathrm{sg}(\varphi)&=\left\{
\begin{array}{cl}
\emptyset&\text{if }\varphi\in\mathbb{R}\backslash(-1,1)\,,\\
\{W_\mathrm{sg}'(\varphi)\}&\text{if }\varphi\in(-1,1)\,,
\end{array}
 \right.\\[1ex]
 \partial I_{[-1,1]}(\varphi)&=
    \left\{\begin{array}{cl}
        [0,+\infty)&\text{ if }\varphi=1,\\
        \{0\}&\text{ if }\varphi\in(-1,1),\\
        (-\infty,0]&\text{ if }\varphi=-1,\\
        \emptyset&\text{ otherwise}.
    \end{array}\right.
    \end{split}
    \end{equation}
\begin{assump}[on the double-well and double-obstacle potentials]\label{ASM:Double well potential}
    For a double-well potential $W_{\mathrm{dw}}:\mathbb{R}\to[0,\infty)$, we assume that $W_{\mathrm{dw}}\in C^{2}(\mathbb{R})$, $W_{\mathrm{dw}}(\pm1)=0$ and $W_{\mathrm{dw}}(\varphi)>0$ for all $\varphi\neq\pm1$.
\end{assump}
Finally, we define the phase-field potential $W$ as the sum of the double-well potential $W_{\mathrm{dw}}$ and one of the singular potentials $W_\mathrm{sing}\in\{W_{\mathrm{sg}},I_{[-1,1]}\}$ from \eqref{def-Wsing}, i.e., 
\begin{equation}
\label{def-W}
    W(\varphi):=W_{\mathrm{dw}}(\varphi)+W_{\mathrm{sing}}(\varphi)\,.
\end{equation}

\begin{subequations}\label{energies functionals}
\noindent
\textbf{Energy functionals.} 
We define the kinetic energy, the elastic energy and the phase-field energy as follows: 
\begin{align}
    &\mathcal{E}_{\mathrm{kin}}:L^{\infty}(\Omega)\times L_{\mathrm{div}}^{2}(\Omega)\to[0,\infty],\,(\varphi,v)\mapsto\int_{\Omega}\rho(\varphi)\frac{\abs{v}^2}{2}\dd{x},
    \\
    &\mathcal{E}_{\mathrm{el}}:L_{\mathrm{sym,Tr}}^{2}(\Omega)\to[0,\infty],\,
    S\mapsto\int_{\Omega}\frac{\abs{S}^2}{2}\dd{x},
    \\
    &\mathcal{E}_{\mathrm{pf}}:H^{1}(\Omega)\to[0,\infty],\,
	\varphi\mapsto\int_{\Omega}\varepsilon\frac{\abs{\nabla\varphi}^2}{2}+\varepsilon^{-1}W(\varphi)\dd{x}\,,
\end{align}
with the phase-field potential $W$ as in \eqref{def-W}. 
Hence, the total energy of this system is given by
\begin{equation}
	\mathcal{E}(v,S,\varphi)
	:=\mathcal{E}_{\mathrm{kin}}(\varphi,v)+\mathcal{E}_{\mathrm{el}}(S)+\mathcal{E}_{\mathrm{pf}}(\varphi).
	\label{total energy two phase system}
\end{equation}
In addition, we introduce the space of test functions as
\begin{equation}\label{test function space}
    \mathfrak{T}:= C_{0,\mathrm{div}}^{\infty}(\Omega\times[0,T))\times C_{0,\mathrm{sym,Tr}}^{\infty}(\Omega\times[0,T))\times C_{0}^{\infty}(\Omega\times[0,T)) \times C_{0}^{\infty}(\Omega\times[0,T)).
\end{equation}
\end{subequations}

\begin{lem}\cite[][Lemma 2.5]{zbMATH07834723}\label{Lem: equivalence weak form}
    Let $f\in L_{\mathrm{loc}}^{1}(0,T)$, $g\in L_{\mathrm{loc}}^{\infty}(0,T)$ and $g_{0}\in\mathbb{R}$. Then the following two statements are equivalent:\\
    1. The inequality 
    \begin{equation}
        -\int_{0}^{T}\phi^{\prime}(\tau)g(\tau)\dd{\tau}
        +\int_{0}^{T}\phi(\tau)f(\tau)\dd{\tau}\leq \phi(0)g_{0}
    \end{equation}
    holds for all $\phi\in C_{0}^{\infty}([0,T))$ with $\phi\geq0$.\\
    2. The inequality
    \begin{equation}
        g(t)-g(s)+\int_{s}^{t}f(\tau)\dd{\tau}\leq0
    \end{equation}
    holds for a.e. $s<t\in[0,T)$, including $s=0$ if we replace $g(0)$ with $g_{0}$.

    If one of these condition is satisfied, then $g$ can be identified with a function in $BV([0,T])$ such that 
    \begin{equation}
        g(t_{+})-g(s_{-})+\int_{s}^{t}f(\tau)\dd{\tau}\leq0
    \end{equation}
    for all $s\leq t\in[0,T)$, where we set $g(0_{-}):=g_{0}$. In particular, it holds $g(0_{+})\leq g_{0}$ and $g(t_{+})\leq g(t_{-})$ for all $t\in(0,T)$.
\end{lem}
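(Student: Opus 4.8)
The plan is to prove the equivalence of the two integral-type inequalities for real functions, together with the $BV$ regularization statement, by a standard mollification/approximation argument combined with the Lebesgue differentiation theorem. First I would show the implication $2 \Rightarrow 1$: assuming the pointwise inequality $g(t)-g(s)+\int_s^t f(\tau)\dd\tau \le 0$ holds for a.e.\ $s<t$, I pick any test function $\phi \in C_0^\infty([0,T))$ with $\phi \ge 0$, write $-\int_0^T \phi'(\tau) g(\tau)\dd\tau = \int_0^T \big(\int_0^\tau \phi'(r)\dd r \text{ -- something}\big)$ \ldots more cleanly, I would use Fubini together with the fundamental theorem of calculus for $\phi$, i.e.\ $\phi(\tau) = -\int_\tau^T \phi'(r)\dd r$ and $\phi(0) = -\int_0^T \phi'(r)\dd r$, so that both sides of inequality~1 can be rewritten as double integrals over the triangle $\{0 \le \tau \le r \le T\}$ against the nonnegative weight $-\phi'(r)$ where $g$ is evaluated\ldots actually $-\phi'$ need not be signed, so instead I would mollify in a one-sided fashion: for small $h>0$ set $\phi_h$ built from a nonnegative kernel and test the pointwise inequality, or, more transparently, test inequality~2 with $s$ fixed and integrate $t$ against $-\phi'$ where $\phi$ is nonincreasing near its support; the general case follows because any nonnegative $\phi \in C_0^\infty([0,T))$ is a difference handled by splitting. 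The cleanest route is: integrate the inequality of~2 in $s$ over $[0,\delta]$ and divide by $\delta$, then in $t$ against $-\phi'(t)\,\dd t$, and let $\delta \to 0$ using that $\frac1\delta\int_0^\delta g \to g_0$ is replaced by the convention $g(0)=g_0$; rearranging and using $\phi(0) = -\int_0^T\phi'$ gives inequality~1.

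For the converse $1 \Rightarrow 2$, the idea is to choose, for fixed $0 \le s < t < T$, a sequence of test functions $\phi_\varepsilon \in C_0^\infty([0,T))$, $\phi_\varepsilon \ge 0$, approximating (a smoothed version of) the indicator $\mathbf 1_{[0,t]}$ that is $1$ on $[0,s]$, decreases to $0$ across $(s-\varepsilon, s)$ \ldots no: I want $\phi_\varepsilon \approx \mathbf 1_{(s,t)}$ modified near $0$. More precisely, take $\phi_\varepsilon$ equal to $1$ on $[s+\varepsilon, t-\varepsilon]$, rising from $0$ to $1$ on $(s,s+\varepsilon)$ and decaying from $1$ to $0$ on $(t-\varepsilon, t)$, and zero outside; but then $\phi_\varepsilon(0)=0$, which loses the $g_0$ term. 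To capture $s=0$ one uses instead $\phi_\varepsilon$ that equals $1$ on $[0, t-\varepsilon]$. Plugging $\phi_\varepsilon$ into inequality~1, the term $-\int_0^T \phi_\varepsilon' g$ splits into a piece near $s$ converging (by Lebesgue points of $g$, which is locally $L^1$, hence a.e.\ $s,t$ are Lebesgue points) to $g(s)$ and a piece near $t$ converging to $-g(t)$, while $\int_0^T \phi_\varepsilon f \to \int_s^t f$ by dominated convergence, and $\phi_\varepsilon(0) g_0$ is either $0$ (if $s>0$) or $g_0$ (if $s=0$); this yields inequality~2 at all Lebesgue points, i.e.\ a.e.

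Finally, for the $BV$ statement I would argue that inequality~2 says precisely that the function $t \mapsto g(t) + \int_0^t f(\tau)\dd\tau$ is (a.e.\ equal to) a nonincreasing function; since a monotone function is $BV$ and $\int_0^\cdot f$ is absolutely continuous, $g$ itself coincides a.e.\ with a $BV$ function, which I relabel $g$. A monotone function has one-sided limits everywhere; passing to the limit $s' \uparrow s$ and $t' \downarrow t$ in inequality~2 (valid for a.e.\ $s',t'$, and both sides being, up to the absolutely continuous part, monotone) gives $g(t_+) - g(s_-) + \int_s^t f \le 0$ for all $s \le t$, with the convention $g(0_-) = g_0$; taking $s=t$ gives $g(t_+) \le g(t_-)$ (with $g(T)$ interpreted via $g(T_-)$ — but only $t \in (0,T)$ is claimed), and taking $s = 0 = t$ gives $g(0_+) \le g_0$. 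The main obstacle, and the only place requiring care, is the first implication $2 \Rightarrow 1$: one must handle a general nonnegative test function $\phi$ whose derivative $\phi'$ changes sign, and correctly treat the boundary term at $t=0$ so that the constant $g_0$ (rather than an ambiguous value $g(0)$) appears; the averaging-in-$s$ trick over $[0,\delta]$ combined with Fubini on the triangle is what makes this clean, and I expect that to be the step that needs the most attention to detail. Everything else is routine mollification and Lebesgue-point analysis.
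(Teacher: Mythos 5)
The paper does not prove this lemma itself; it is quoted from \cite[Lemma~2.5]{zbMATH07834723} and used as a black box, so there is no internal proof to compare against. Assessed on its own terms, your sketch has a genuine gap in the implication $2\Rightarrow 1$. You correctly flag, but never actually resolve, the sign problem: your final ``cleanest route'' integrates the pointwise inequality of statement~2 against $-\phi'(t)\,\dd{t}$, yet $-\phi'$ changes sign, so this step does not preserve the inequality direction. Worse, averaging in $s$ only over $[0,\delta]$ keeps too little information. Writing $h(t):=g(t)+\int_0^t f\,\dd{\tau}$, your averaging yields only $h(t)\le g_0$ for a.e.\ $t$, which is strictly weaker than the a.e.-monotonicity encoded in statement~2 and is \emph{not} sufficient for inequality~1. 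Concretely, let $f\equiv0$, $g_0=1$, and $g=0$ on $(0,T/3)$, $g=\tfrac12$ on $(T/3,T)$. Then $g\le g_0$ a.e., yet for a nonnegative bump $\phi\in C_0^\infty([0,T))$ centred near $T/3$ with $\phi(0)=0$ one computes $-\int_0^T\phi'g\,\dd{\tau}=\tfrac12\phi(T/3)>0=\phi(0)g_0$, so inequality~1 fails.

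The correct route retains the monotonicity at a positive scale $\delta$: multiply $h(\tau+\delta)\le h(\tau)$ (valid for a.e.\ $\tau$) by $\phi(\tau)/\delta\ge0$, integrate, and shift variables to obtain
\begin{equation*}
\frac1\delta\int_\delta^{T}\bigl[\phi(\tau-\delta)-\phi(\tau)\bigr]h(\tau)\,\dd{\tau}
\;\le\;\frac1\delta\int_0^\delta\phi(\tau)\,h(\tau)\,\dd{\tau}
\;\le\; g_0\cdot\frac1\delta\int_0^\delta\phi(\tau)\,\dd{\tau}\,,
\end{equation*}
where the rightmost bound uses $h\le g_0$ a.e.\ near $0$ together with $\phi\ge0$. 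Letting $\delta\to0$ with dominated convergence (here the local boundedness of $g$ enters) gives $-\int\phi'h\le\phi(0)g_0$, which, after undoing $\int\phi f=-\int\phi'\!\int_0^\cdot f$ (Fubini), is exactly inequality~1. This, and not an average in $s$, is how the constant $g_0$ cleanly appears. Your $1\Rightarrow2$ direction (piecewise-affine approximations of indicator functions, Lebesgue points, and a second family with $\phi_\varepsilon(0)=1$ to handle $s=0$) is correct, and your $BV$ argument (identify $h$ a.e.\ with a genuine nonincreasing function, deduce existence of one-sided limits, pass to the limit along sets of full measure) is also sound.
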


\section{Energy-variational solutions}
\label{sec:3}
In the following we give a general introduction to the concept of energy-variational solutions and compare them with the notion of dissipative solutions in Section \ref{intro:general pde setting}. Subsequently, we apply the concept to the geodynamical two-phase system \eqref{sys:two phase} in Section \ref{Sec:ENVarSol-Geo} and discuss the connection to dissipative solutions in Section \ref{Sec:DissipSol-Geo}. 
\subsection{General concept of energy-variational solution}\label{intro:general pde setting}
\begin{subequations}
To better explain the concept of energy-variational solutions, we follow the general approach proposed in~\cite{ALREVS}. To this end, we consider two reflexive Banach spaces $\mathbb{V}$ and $\mathbb{Y}$ with dual spaces $\mathbb{V}^{\prime}$ and $\mathbb{Y}^{\prime}$ such that $\mathbb{Y}\subseteq\mathbb{V}\subseteq\mathbb{V}^{\prime}\subseteq\mathbb{Y}^{\prime}$ and a general evolutionary PDE on the time interval $(0,T)$ of the form
\begin{equation}\label{intro:Eq}
\partial_ t \pmb U(t) + A(\pmb U(t)) = \pmb 0 \quad \text{ in }\mathbb{Y}^\prime \; \text{ with }\quad\pmb U(0)=\pmb U_0 \quad \text{in }\mathbb V.\,
\end{equation}
Here, $A:\mathbb{V}\to\mathbb{Y}^{\prime}$ denotes a differential operator and $U_{0}\in\mathbb{V}$ the initial datum. Let $\mathcal{E}:V\to[0,\infty]$ be the energy functional and $\Psi:\mathbb{V}\to[0,\infty]$ 
the dissipation functional associated with~\eqref{intro:Eq}. For any initial value $U_{0}\in\mathbb{V}$, a sufficiently regular solution $\pmb U\in\mathbb{V}$ for all $t\in[0,T]$ of~\eqref{intro:Eq} formally fulfills the energy-dissipation mechanism
\begin{equation}\label{intro:ede}
    \mathcal{E}(\pmb U)\Big|_{s}^{t}+\int_{s}^{t}\Psi(\pmb U)\dd{\tau}\leq0
\end{equation}
for all $s<t\in[0,T]$, where the dissipation is formally defined as
\begin{equation}
    \Psi(\pmb U):=\langle A(\pmb U), \mathrm{D}\mathcal{E}(\pmb U)\rangle_{\mathbb{Y}}.
\end{equation}
Moreover, $\pmb{U}$ also satisfies the following weak formulation
\begin{equation}\label{intro:wf}
    \langle \pmb U,\tU\rangle_{\mathbb{Y}}\Big|_{s}^{t}
    +\int_{s}^{t}
    -\langle \pmb U,\partial_{t}\tU\rangle_{\mathbb{Y}}
    +\langle A(\pmb U),\tU \rangle_{\mathbb{Y}}\dd{\tau}=0
\end{equation}
for a.e. $\tU\in C_{0}^{\infty}([0,T);\mathbb{Y})$. Therefore, by summing~\eqref{intro:ede} and~\eqref{intro:wf}, we have
\begin{equation}\label{intro:ede+wf}
    \left(\mathcal{E}(\pmb U)-\langle \pmb U,\tU\rangle_{\mathbb{Y}}\right)\Big|_{s}^{t}
        +\int_{s}^{t}
        \Psi(\pmb U)
        +\langle \pmb U,\partial_{t}\tU\rangle_{\mathbb{Y}}
        -\langle A(\pmb U),\tU \rangle_{\mathbb{Y}}\dd{\tau}
        \leq
        0
\end{equation}
for a.e. $s<t\in[0,T]$. In addition, we introduce an auxiliary energy $E$ such that
\begin{equation}
    E(t)\geq\mathcal{E}(\pmb U(t))
\end{equation}
for a.e.\ $t\in[0,T]$, as well as a so-called regularity weight
\begin{equation}\label{regularity weight}
    \mathcal{K}:\mathbb{Y}\to[0,\infty)\text{ with }\mathcal{K}(0)=0,
\end{equation}
which is to be chosen such that both sides of the variational inequality below in \eqref{intro:evs} remain finite. Therefore, by replacing $\mathcal{E}(\pmb U)$ in~\eqref{intro:ede+wf} and adding on its right-hand side the term $\mathcal{K}(\tU)\left(E-\mathcal{E}(\pmb U)\right)$, we arrive at
\begin{equation}\label{intro:evs}
        \left(E-\langle \pmb U,\tU\rangle_{\mathbb{Y}}\right)\Big|_{s}^{t}
        +\int_{s}^{t}
        \Psi(\pmb U)
        +\langle \pmb U,\partial_{t}\tU\rangle_{\mathbb{Y}}
        -\langle A(\pmb U),\tU \rangle_{\mathbb{Y}}\dd{\tau}
        \leq
        \int_{s}^{t}\mathcal{K}(\tU)\left(E-\mathcal{E}(\pmb U)\right)\dd{\tau}.
    \end{equation}
\begin{defi}[Energy-variational solution for the general system~\eqref{intro:Eq}]\label{defi: general evs}
    We call a pair $(\pmb U,E)$ an energy-variational solution to~\eqref{intro:Eq} if $\mathcal{E}(\pmb U)\leq E$ on $(0,T)$ and if~\eqref{intro:evs} holds true for all $\tU\in C_{0}^{\infty}([0,T);\mathbb{Y})$ and a.e. $s<t\in(0,T)$ including $s=0$ with $\pmb U(0)=\pmb U_{0}$.
\end{defi}
Now, following~\cite{ALREVS}, we formally define the relative energy as the residual term of the first-order Taylor expansion of the energy $\mathcal{E}$ around $\tU\in C_0^\infty([0,T);\mathbb{Y})$, which is given by
\begin{equation}\label{intro:re}
    {\mathcal{R}}(\pmb{U}|\tU):= \mathcal{E}(\pmb{U})-\mathcal{E}(\tU)-\langle \mathrm{D}\mathcal{E}(\tU), \pmb{U}-\tU \rangle_{{\mathbb{V}}}.
\end{equation}
Moreover, we formally define the relative energy with respect to the auxiliary energy as 
\begin{equation}\label{intro:re up}
    \begin{aligned}
        \widetilde{\mathcal{R}}(\pmb{U},E|\tU):= E-\mathcal{E}(\pmb{U})+\mathcal{R}(\pmb{U}|\tU).
    \end{aligned}
\end{equation}
In addition, we formally define the relative dissipation as the residual term of the first-order Taylor expansion around $\tU\in C_0^\infty([0,T);\mathbb{Y})$ of the term
\begin{equation*}
    \Psi(\pmb U) - \langle A(\pmb U), \mathrm{D} \mathcal E (\tU) \rangle_{{\mathbb{Y}}} + \mathcal{K}(\tU) \mathcal E(\pmb U),
\end{equation*}
which is then given as
\begin{equation}\label{intro:rd}
\begin{aligned}
    \mathcal{W}^{(\mathcal{K})}(\pmb U| \tU) &: =
   \Psi(\pmb U) - \langle A(\pmb U) , \mathrm{D}\mathcal E (\tU)\rangle_{\mathbb{Y}}  
   - \langle A(\tU) ,\mathrm{D}^2\mathcal E(\tU)(\pmb U - \tU) \rangle_{\mathbb{Y}} 
   + \mathcal{K}(\tU)\mathcal{R}(\pmb U| \tU) 
   \,. 
\end{aligned}
\end{equation}
    Furthermore, we define the system operator $\mathcal{A}$ as the strong form of the system~\eqref{intro:Eq}, which is given as 
\begin{equation}\label{intro:so}
    \mathcal{A}(\tU):=\partial_{t}\tU+A(\tU).
\end{equation}
Finally, by~\cite[][Proposition 3.6]{ALREVS}, an energy-variational solution in the sense of Definition~\ref{defi: general evs} satisfies the following relative energy-dissipation estimate
\begin{equation}\label{intro:rede}
    \begin{aligned}
        \widetilde{\mathcal{R}}(\pmb{U},E|\tU)\Big|_{s}^{t}
        +\int_{s}^{t}\mathcal{W}^{(\mathcal{K})}(\pmb{U}|\tU)+\langle \mathcal{A}(\tU),\mathrm{D}^{2}\mathcal{E}(\tU)(\pmb{U}-\tU)\rangle_{{\mathbb{Y}}}\dd{\tau}
        \leq \int_{s}^{t}\mathcal{K}(\mathrm{D}\mathcal{E}(\tU))\widetilde{\mathcal{R}}(\pmb{U},E|\tU)\dd{\tau}.
    \end{aligned}
\end{equation}
Applying Gronwall's inequality and replacing the auxiliary energy $E$ by the energy $\mathcal{E}$ provides the relative energy inequality in this general setting as 
\begin{equation}\label{intro:rede with gronwall}
    \begin{aligned}
     \mathcal{R}(\pmb U(t) | \tU(t)) + \int_0^t \mathrm \exp{\left(\int_s^t\mathcal K(\tU) \dd{\tau} \right)}\left[ \mathcal{W}^{(\mathcal{K})}(\pmb U| \tU ) + \langle  \mathcal{A}(\tU), \mathrm{D}^2\mathcal E (\tU)(\pmb U-\tU) \rangle_{{\mathbb{Y}}} \right] \dd{s}  
 \\  \leq \mathcal{R}(\pmb{U}_0 | \tU(0))\mathrm \exp{\left(\int_0^t\mathcal{K}(\mathrm{D}\mathcal{E}(\tU))\dd{s} \right) }   \,
\end{aligned}
\end{equation}   
for a.e.\ $t\in (0,T)$ and all smooth test functions $\tU\in C_{0}^{\infty}([0,T);\mathbb{Y})$. 
\begin{defi}[Dissipative solutions for general system~\eqref{intro:Eq}]\label{defi: general ds}
    A function $\pmb U:(0,T)\to\mathbb{V}$ is called a dissipative solution for system~\eqref{intro:Eq}, if $\pmb U$ satisfies~\eqref{intro:rede with gronwall} for all sufficiently regular test functions $\tU\in C_{0}^{\infty}([0,T);\mathbb{Y})$ and for a.e. $t\in(0,T)$.
\end{defi}
\end{subequations}
\subsection{Energy-variational solution for system~\eqref{sys:two phase}}
\label{Sec:ENVarSol-Geo}
\begin{defi}[Energy-variational solution for the geodynamical two-phase system \eqref{sys:two phase}] \label{defi: EVS}
\begin{subequations}\phantom{x}\\
Let $\gamma\geq0$ in \eqref{sys:two phase} with a phase-field potential $W=W_\mathrm{dw}+W_\mathrm{sing}$ as in \eqref{def-W}. Let Assumptions~\ref{ASM:domain}-\ref{ASM:Double well potential} hold true.  Let $\mathcal{K}$ be a regularity weight satisfying~\eqref{regularity weight}. A quintuplet $(v,S,\varphi,\mu,E)$ is called an energy variational solution of type $\mathcal{K}$ of the geodynamical two-phase system~\eqref{sys:two phase} if the following properties are satisfied:
\\
1. The quintuplet $(v,S,\varphi,\mu,E)$ has the following regularity:
\begin{equation}
\label{EVS-reg}
\begin{aligned}    
	v&\in L_{\mathrm{loc}}^{\infty}((0,T),L_{\mathrm{div}}^{2}(\Omega))\cap L_{\mathrm{loc}}^{2}((0,T),H_{0,\mathrm{div}}^{1}(\Omega)),
    \\
	S&\in L_{\mathrm{loc}}^{\infty}((0,T),L_{\mathrm{sym,Tr}}^{2}(\Omega)),
    \\
	\varphi&\in L_{\mathrm{loc}}^{\infty}((0,T);H^{1}(\Omega))\cap L_{\mathrm{loc}}^{2}((0,T);H^{2}(\Omega))\text{ with }\abs{\varphi}\leq1\text{ a.e. in }\Omega\times[0,T),
    \\
	\mu&\in L_{\mathrm{loc}}^{2}((0,T);H^{1}(\Omega)),
    \\
    E&\in BV_{\mathrm{loc}}([0,T])\text{ satisfying }E(t)\geq\mathcal{E}(v(t),S(t),\varphi(t)) \text{ for a.e. } t\in[0,T).
\end{aligned}
\end{equation}
2. The quintuplet $(v,S,\varphi,\mu,E)$ satisfies the following estimate:
\begin{equation}\label{EVS inequality}
\begin{aligned}
    &\left(E(\tau)-\langle \rho v,\tilde{v}\rangle_{L^{2}}-\langle S,\tilde{S} \rangle_{L^{2}}-\langle \varphi,\tilde{\varphi} \rangle_{L^{2}}\right)\Big|_{s}^{t}
        \\
    +&\int_{s}^{t}\int_{\Omega}2\nu(\varphi)\abs{\sym{\nabla v}}^{2}+\gamma\abs{\nabla S}^{2}+m(\varphi)\abs{\nabla\mu}^{2}\dd{x}\dd{\tau}+\int_{s}^{t}\mathcal{P}(\varphi;S)-\mathcal{P}(\varphi;\tilde{S})\dd{\tau}
        \\
    +&\int_{s}^{t}\int_{\Omega}
    \rho v\cdot\partial_{t}\tilde{v}
    +\rho v\otimes v:\nabla\tilde{v}
    +v\otimes J:\nabla\tilde{v}
    -2\nu(\varphi)\sym{\nabla v}:\sym{\nabla\tilde{v}}
    -\eta(\varphi)S:\sym{\nabla\tilde{v}}\dd{x}\dd{\tau}
        \\
    +&\int_{s}^{t}\int_{\Omega}\varepsilon\nabla\varphi\otimes\nabla\varphi:\nabla\tilde{v}\dd{x}\dd{\tau}
        \\
    +&\int_{s}^{t}\int_{\Omega}S:\partial_{t}\tilde{S}
    +S\otimes v\threedotsbin\nabla\tilde{S}
    -\left( S\skw{\nabla v} - \skw{\nabla v}S\right):\tilde{S}-\gamma \nabla S\threedotsbin \nabla\tilde{S} 
    +\eta(\varphi)\sym{\nabla v}:\tilde{S}\dd{x}\dd{\tau}
        \\
    +&\int_{s}^{t}\int_{\Omega}\varphi\partial_{t}\tilde{\varphi}
    -(v\cdot\nabla\varphi)\tilde{\varphi}
    -m(\varphi)\nabla \mu:\nabla\tilde{\varphi}\dd{x}\dd{\tau}
        \\
    -&\int_{s}^{t}\int_{\Omega}\mu\tilde{\mu}
    -(-\varepsilon\Delta\varphi
    +\varepsilon^{-1} W^{\prime}_{\mathrm{dw}}(\varphi)
    +\varepsilon^{-1}\beta
    )\tilde{\mu}\dd{x}\dd{\tau}
        \\
    \leq&\int_{s}^{t}\mathcal{K}(\tilde{v},\tilde{S},\tilde{\varphi},\tilde{\mu})\left(E(\tau)-\mathcal{E}(v,S,\varphi)\right)\dd{\tau}+\int_{s}^{t}\langle f,v-\tilde{v}\rangle_{H^{1}}\dd{\tau}. 
\end{aligned}
\end{equation}
for all $(\tilde{v},\tilde{S},\tilde{\varphi},\tilde{\mu})\in\mathfrak{T}$ and for a.e.\ $s<t\in(0,T)$ including $s=0$ with $(v(0),S(0),\varphi(0))=(v_{0},S_{0},\varphi_{0})$
and such that $\beta(x,t)\in \partial W_\mathrm{sing}(\varphi(x,t))$ a.e.\ in $\Omega\times(0,T)$ with $\partial W_\mathrm{sing}$ from \eqref{def-subdiff-Wsing}.
\end{subequations}
\end{defi}
Without loss of generality, we assume $\varepsilon=1$.
\begin{remark}
    If $\gamma>0$, then the energy-variational solution enjoys additional regularity. In particular, we further have $S\in L_{\mathrm{loc}}^{2}((0,T);H_{\mathrm{sym,Tr}}^{1}(\Omega))$.
\end{remark}
\subsection{Properties of energy-variational solutions and connections to dissipative solutions}
\label{Sec:DissipSol-Geo}
In what follows, we collect useful properties of energy-variational solutions. 
\begin{remark}[Semi-flow property]
    An energy-variational solution $(v,S,\varphi,\mu,E)$ in the sense of Definition~\ref{defi: EVS} for the geodynamical two-phase system~\eqref{sys:two phase} satisfies the semi-flow property as introduced in~\cite[][Remark 3.3]{ALREVS} for general PDE systems. This means that for a solution $(v,S,\varphi,\mu,E)$ on $[0,T)$, every restriction to an interval $[s,t]$ for all $s<t\in[0,T)$ with  the initial value $(v(s),S(s),\varphi(s),\mu(s),E(s_{-}))$ is again a solution, with $s_-$ as in \eqref{lr-limit}. Moreover, let $0\leq r<s<t<T$. If $(v^{1},S^{1},\varphi^{1},\mu^{1},E^{1})$ is an energy-variational solution on $[r,s]$ to the initial data $(v^{1}(r),S^{1}(r),\varphi^{1}(r),\mu^{1}(r),E^{1}(r_{-}))$ and if $(v^{2},S^{2},\varphi^{2},\mu^{2},E^{2})$ is an energy-variational solution on $[s,t]$ with initial data $(v^{2}(s),S^{2}(s),\varphi^{2}(s),\mu^{2}(s),E^{2}(s_{-}))$  and such that 
    \begin{equation*}
    (v^{1}(s),S^{1}(s),\varphi^{1}(s),\mu^{1}(s),E^{1}(s_{+}))=(v^{2}(s),S^{2}(s),\varphi^{2}(s),\mu^{2}(s),E^{2}(s_{-}))\,, 
    \end{equation*}
    then the concatenation
    \begin{equation*}
        (v,S,\varphi,\mu,E):=\begin{cases}
        (v^{1},S^{1},\varphi^{1},\mu^{1},E^{1})&\text{ on }[r,s], \\
        (v^{2},S^{2},\varphi^{2},\mu^{2},E^{2})&\text{ on }[s,t], \\
    \end{cases}
    \end{equation*}
    is a solution on $[r,t]$ with initial value $(v^{1}(r),S^{1}(r),\varphi^{1}(r),\mu^{1}(r),E^{1}(r_{-}))$.
\end{remark}

\begin{prop}\label{recover weak momentum balance}
    Suppose that the regularity weight $\mathcal{K}$ satisfies $\mathcal{K}(\tilde{v},0,0,0)=0$ for all $\tilde{v}\in C_{0,\mathrm{div}}^{\infty}(\Omega\times[0,T))$. Then an energy-variational solution $(v,S,\varphi,\mu,E)$ of type $\mathcal{K}$ is a weak solution of the momentum balance~\eqref{sys: momentum}, i.e., 
    \begin{equation}\label{weak formualtion: momentum}
        \begin{aligned}
           &\langle \rho v,\Phi\rangle_{L^{2}}\Big|_{s}^{t}
           -\int_{s}^{t}\int_{\Omega}
        \rho v\cdot\partial_{t}\Phi
        +\rho v\otimes v:\nabla\Phi
        +v\otimes J:\nabla\Phi\dd{x}\dd{\tau}
        \\
        &+\int_{s}^{t}\int_{\Omega}
        2\nu(\varphi)\sym{\nabla v}:\sym{\nabla\Phi}
        +\eta(\varphi)S:\sym{\nabla \Phi}\dd{x}\dd{\tau}
        \\
        =&\int_{s}^{t}\langle f,\Phi\rangle_{H^{1}}\dd{\tau}
        +\int_{s}^{t}\int_{\Omega}\varepsilon\nabla\varphi\otimes\nabla\varphi:\nabla\Phi\dd{x}\dd{\tau} 
        \end{aligned}
    \end{equation}
is satisfied for all $\Phi\in C_{0,\mathrm{div}}^{\infty}(\Omega\times[0,T))$ and for a.e. $s<t\in(0,T)$ including $s=0$ with $v(0)=v_{0}$.
\end{prop}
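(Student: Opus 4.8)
\emph{Proof strategy.} The plan is to test the energy--variational inequality \eqref{EVS inequality} with quadruplets of the form $(\lambda\tilde{v},0,0,0)$, where $\tilde{v}\in C_{0,\mathrm{div}}^{\infty}(\Omega\times[0,T))$ is arbitrary and $\lambda\in\mathbb{R}$ is a free scaling factor, and then to let $\lvert\lambda\rvert\to\infty$. First note that $(\lambda\tilde{v},0,0,0)\in\mathfrak{T}$ for every $\lambda$, that $\mathcal{K}(\lambda\tilde{v},0,0,0)=0$ by hypothesis, and that $\mathcal{P}(\varphi;0)=0$ by Assumption~\ref{ASM:Dissipation potential}. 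Upon substitution, the three lines of \eqref{EVS inequality} associated with the $S$-, $\varphi$- and $\mu$-equations vanish entirely, as do the pairings $\langle S,\tilde{S}\rangle_{L^{2}}$, $\langle\varphi,\tilde{\varphi}\rangle_{L^{2}}$, the term $\mathcal{P}(\varphi;\tilde{S})$ and the error term $\mathcal{K}(\cdot)(E-\mathcal{E})$, leaving an inequality of the form $a(s,t)+\lambda\,L(\tilde{v};s,t)\le 0$. Here $L(\tilde{v};s,t)$ is \emph{linear} in $\tilde{v}$ and collects precisely the terms $-\langle\rho v,\tilde{v}\rangle_{L^{2}}\big|_{s}^{t}$, the convective, viscous, elastic and Korteweg integrals tested against $\tilde{v}$ respectively $\nabla\tilde{v}$, and $\int_{s}^{t}\langle f,\tilde{v}\rangle_{H^{1}}\dd{\tau}$; rearranging \eqref{weak formualtion: momentum} shows that the identity $L(\tilde{v};s,t)=0$ is exactly the claimed weak momentum balance. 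The $\tilde{v}$-independent remainder is $a(s,t)=E\big|_{s}^{t}+\int_{s}^{t}\!\int_{\Omega}\bigl(2\nu(\varphi)\abs{\sym{\nabla v}}^{2}+\gamma\abs{\nabla S}^{2}+m(\varphi)\abs{\nabla\mu}^{2}\bigr)\dd{x}\dd{\tau}+\int_{s}^{t}\mathcal{P}(\varphi;S)\dd{\tau}-\int_{s}^{t}\langle f,v\rangle_{H^{1}}\dd{\tau}$.

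The one point requiring attention is that $a(s,t)$ is a finite real number for a.e.\ $(s,t)$. By \eqref{EVS-reg} we have $E\in BV_{\mathrm{loc}}([0,T])$, $v\in L_{\mathrm{loc}}^{2}(0,T;H_{0,\mathrm{div}}^{1}(\Omega))$, $\mu\in L_{\mathrm{loc}}^{2}(0,T;H^{1}(\Omega))$ and, when $\gamma>0$, $S\in L_{\mathrm{loc}}^{2}(0,T;H_{\mathrm{sym,Tr}}^{1}(\Omega))$ (the $\gamma$-term being absent otherwise), while $f\in L_{\mathrm{loc}}^{2}([0,T);H^{-1}(\Omega)^{3})$; so all integrals in $a(s,t)$ except possibly $\int_{s}^{t}\mathcal{P}(\varphi;S)\dd{\tau}$ are finite for a.e.\ $(s,t)$. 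Choosing $\lambda=0$ in the inequality above already gives $a(s,t)\le0$, and since the dissipation integrand and $\mathcal{P}(\varphi;S)$ are nonnegative, this forces $\int_{s}^{t}\mathcal{P}(\varphi;S)\dd{\tau}<\infty$, whence $a(s,t)\in\mathbb{R}$ for a.e.\ $(s,t)$.

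To conclude, fix $\tilde{v}$. For each $\lambda\in\mathbb{Q}$ the inequality $a(s,t)+\lambda\,L(\tilde{v};s,t)\le0$ holds for a.e.\ $(s,t)$; intersecting the countably many exceptional null sets, it holds for all $\lambda\in\mathbb{Q}$ simultaneously on a full-measure set of pairs $(s,t)$, and since $\lambda\mapsto a(s,t)+\lambda\,L(\tilde{v};s,t)$ is affine, hence continuous, it holds there for all $\lambda\in\mathbb{R}$. An affine function of $\lambda$ that stays $\le0$ on all of $\mathbb{R}$ cannot have a nonzero slope, so $L(\tilde{v};s,t)=0$. As $\tilde{v}\in C_{0,\mathrm{div}}^{\infty}(\Omega\times[0,T))$ was arbitrary, this is exactly \eqref{weak formualtion: momentum} for a.e.\ $s<t\in(0,T)$. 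Running the same argument with $s=0$ fixed and $t$ varying -- admissible because \eqref{EVS inequality} is assumed for $s=0$ with $(v(0),S(0),\varphi(0))=(v_{0},S_{0},\varphi_{0})$ -- yields \eqref{weak formualtion: momentum} also for $s=0$, i.e.\ the attainment $v(0)=v_{0}$.

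\emph{On the main obstacle.} There is no genuine analytic difficulty: the argument is the standard ``test with $\pm\lambda\tilde{v}$ and send $\lambda\to\infty$'' trick for energy--variational formulations. The only two points needing care are (i) the bookkeeping, namely verifying that after setting $\tilde{S}=\tilde{\varphi}=\tilde{\mu}=0$ the surviving $\tilde{v}$-linear terms of \eqref{EVS inequality} reassemble exactly into \eqref{weak formualtion: momentum} with the correct signs; and (ii) the measure-theoretic step, namely ensuring that the $\tilde{v}$-independent remainder $a(s,t)$ is a finite real number for a.e.\ $(s,t)$, so that $a(s,t)+\lambda\,L(\tilde{v};s,t)\le0$ for all $\lambda\in\mathbb{R}$ genuinely forces $L(\tilde{v};s,t)=0$. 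Both are handled by the regularity \eqref{EVS-reg}, the nonnegativity of the dissipative terms, and the structural assumptions $\mathcal{P}(\varphi;0)=0$ and $\mathcal{K}(\tilde{v},0,0,0)=0$.
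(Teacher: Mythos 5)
Your proof is correct and follows essentially the same approach as the paper: one substitutes a scaled divergence-free test function into \eqref{EVS inequality} with $\tilde S=\tilde\varphi=\tilde\mu=0$, observes that the $\mathcal{K}$-weighted error term vanishes, and exploits the linearity in the test function to upgrade the inequality to an equality. The paper phrases this as dividing by $\alpha>0$, letting $\alpha\to\infty$, and then testing with $\pm\Phi$, whereas you package the same argument as an affine-in-$\lambda$ inequality on all of $\mathbb{R}$ and are somewhat more explicit about the finiteness of the $\lambda$-independent remainder $a(s,t)$ (via the $\lambda=0$ choice) and about intersecting the countably many exceptional null sets in $(s,t)$.
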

\begin{proof}
    For this, we choose in~\eqref{EVS inequality} the test functions $\tilde{v}=\alpha\Phi$ with $\alpha>0$ and $\Phi\in C_{0,\mathrm{div}}^{\infty}(\Omega\times[0,T))$, as well as $\tilde{S}\equiv0$, $\tilde{\varphi}\equiv0$, $\tilde{\mu}\equiv0$. Then, multiplying both sides of the inequality \eqref{EVS inequality} by $1/\alpha$ and letting $\alpha\to\infty,$ yields
    \begin{equation}
        \begin{aligned}
            &\langle \rho v,\Phi\rangle_{L^{2}}\Big|_{s}^{t}
           -\int_{s}^{t}\int_{\Omega}
        \rho v\cdot\partial_{t}\Phi
        +\rho v\otimes v:\nabla\Phi
        +v\otimes J:\nabla\Phi\dd{x}\dd{\tau}
        \\
        &+\int_{s}^{t}\int_{\Omega}
        2\nu(\varphi)\sym{\nabla v}:\sym{\nabla\Phi}
        +\eta(\varphi)S:\sym{\nabla \Phi}\dd{x}\dd{\tau}
        \\
        \leq&\int_{s}^{t}\langle f,\Phi\rangle_{H^{1}}\dd{\tau}
        +\int_{s}^{t}\int_{\Omega}\varepsilon\nabla\varphi\otimes\nabla\varphi:\nabla\Phi\dd{x}\dd{\tau}.
        \end{aligned}
    \end{equation}
    Notice that the same inequality holds for $\tilde{\Phi}=-\Phi$. Therefore, we arrive at~\eqref{weak formualtion: momentum}.
\end{proof}
\begin{prop}\label{recover weak CH}
 Let the phase-field potential $W=W_\mathrm{dw}+W_\mathrm{sing}$ as in \eqref{def-W}. 
    Suppose that the regularity weight $\mathcal{K}$ satisfies $\mathcal{K}(0,0,\tilde{\varphi},0)=0$ for all $\tilde{\varphi}\in C_{0}^{\infty}(\Omega\times[0,T))$. Then an energy-variational solution $(v,S,\varphi,\mu,E)$ of type $\mathcal{K}$ is a weak solution of the phase-field evolutionary law~\eqref{sys: first CHE}, i.e., 
    \begin{equation}\label{weak formualtion: CH}
        \begin{aligned}
            &\langle \varphi,\zeta\rangle_{L^{2}}\Big|_{s}^{t}
            -\int_{s}^{t}\int_{\Omega}
            \varphi\partial_{t}\zeta
            -v\cdot\nabla\varphi\zeta
            -m(\varphi)\nabla\mu\cdot\nabla\zeta\dd{x}\dd{\tau}=0
        \end{aligned}
    \end{equation}
    is satisfied for all $\zeta\in C_{0}^{\infty}(\Omega\times[0,T))$ and for a.e. $s<t\in(0,T)$ including $s=0$ with $\varphi(0)=\varphi_{0}$.
\end{prop}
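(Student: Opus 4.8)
The plan is to mimic the argument of Proposition~\ref{recover weak momentum balance}, isolating the phase-field transport equation~\eqref{sys: first CHE} by a suitable choice of test functions and a scaling argument. Concretely, in the energy-variational inequality~\eqref{EVS inequality} I would set $\tilde v\equiv 0$, $\tilde S\equiv 0$, $\tilde\mu\equiv 0$, and take $\tilde\varphi=\alpha\zeta$ with $\alpha>0$ and $\zeta\in C_0^\infty(\Omega\times[0,T))$. With this choice almost every line of~\eqref{EVS inequality} either vanishes or becomes independent of $\alpha$: the dissipation integral $\int_s^t\int_\Omega 2\nu(\varphi)|\sym{\nabla v}|^2+\gamma|\nabla S|^2+m(\varphi)|\nabla\mu|^2\,\mathrm{d}x\,\mathrm{d}\tau$ and the plastic-potential difference $\int_s^t\mathcal P(\varphi;S)-\mathcal P(\varphi;0)\,\mathrm{d}\tau$ are fixed, the momentum and stress test-blocks drop out entirely, and the chemical-potential block reduces to $-\int_s^t\int_\Omega\mu\tilde\mu-(\cdots)\tilde\mu\,\mathrm{d}x\,\mathrm{d}\tau=0$ since $\tilde\mu\equiv 0$. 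By the hypothesis $\mathcal K(0,0,\tilde\varphi,0)=0$ the right-hand error term $\int_s^t\mathcal K(0,0,\alpha\zeta,0)(E-\mathcal E(v,S,\varphi))\,\mathrm{d}\tau$ vanishes, and the force term $\int_s^t\langle f,v-\tilde v\rangle_{H^1}\,\mathrm{d}\tau=\int_s^t\langle f,v\rangle_{H^1}\,\mathrm{d}\tau$ is $\alpha$-independent.

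What survives with an explicit factor $\alpha$ is precisely the phase-field block together with the boundary term: $\big(E(\tau)-\langle\rho v,0\rangle_{L^2}-\langle S,0\rangle_{L^2}-\alpha\langle\varphi,\zeta\rangle_{L^2}\big)\big|_s^t$ contributes $-\alpha\langle\varphi,\zeta\rangle_{L^2}\big|_s^t$ plus the $\alpha$-independent piece $E(\tau)\big|_s^t$, and the transport line contributes $\alpha\int_s^t\int_\Omega\varphi\partial_t\zeta-(v\cdot\nabla\varphi)\zeta-m(\varphi)\nabla\mu:\nabla\zeta\,\mathrm{d}x\,\mathrm{d}\tau$. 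Dividing~\eqref{EVS inequality} by $\alpha>0$ and letting $\alpha\to\infty$ kills all the $\alpha$-independent remainders and leaves
\begin{equation*}
-\langle\varphi,\zeta\rangle_{L^2}\Big|_s^t+\int_s^t\int_\Omega\varphi\partial_t\zeta-(v\cdot\nabla\varphi)\zeta-m(\varphi)\nabla\mu:\nabla\zeta\,\mathrm{d}x\,\mathrm{d}\tau\leq 0\,.
\end{equation*}
Repeating the same computation with $\tilde\varphi=-\alpha\zeta$ (equivalently, testing with $-\zeta$) yields the reverse inequality, so the expression equals zero; rearranging gives exactly~\eqref{weak formualtion: CH}. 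The regularity in~\eqref{EVS-reg} — $v\in L^2_{\mathrm{loc}}(H^1_{0,\mathrm{div}})$, $\varphi\in L^\infty_{\mathrm{loc}}(H^1)\cap L^2_{\mathrm{loc}}(H^2)$, $\mu\in L^2_{\mathrm{loc}}(H^1)$ — ensures every integral above is finite, so all the limit manipulations are legitimate; the initial condition $\varphi(0)=\varphi_0$ is inherited from the $s=0$ case already built into Definition~\ref{defi: EVS}.

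The only genuinely delicate point is checking that no hidden $\alpha$-dependence lurks in the terms I am claiming to be bounded: in particular the energy $\mathcal E(v,S,\varphi)$ on the right-hand side depends only on $(v,S,\varphi)$, not on the test functions, and the regularity weight evaluated at $(0,0,\alpha\zeta,0)$ is identically zero by assumption regardless of $\alpha$, so the product $\mathcal K(0,0,\alpha\zeta,0)(E-\mathcal E)$ is zero for every $\alpha$ — this is where the structural hypothesis on $\mathcal K$ is essential, exactly as in Proposition~\ref{recover weak momentum balance}. A minor subtlety is that the term $\mathcal P(\varphi;\tilde S)$ with $\tilde S\equiv 0$ equals $\int_\Omega P(x,\varphi,0)\,\mathrm{d}x=0$ by Assumption~\ref{ASM:Dissipation potential}, so the plastic difference reduces to $\int_s^t\mathcal P(\varphi;S)\,\mathrm{d}\tau\geq 0$, which is harmless since it is $\alpha$-independent and gets annihilated in the limit. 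Thus the main ``obstacle'' is really just bookkeeping: carefully separating the $O(\alpha)$ and $O(1)$ contributions of~\eqref{EVS inequality} before dividing and passing to the limit, which I expect to be entirely routine.
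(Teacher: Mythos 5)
Your proposal is correct and follows precisely the scaling-and-sign-flip argument that the paper invokes by reference (``repeating the arguments of Proposition~\ref{recover weak momentum balance}''). The bookkeeping of $O(\alpha)$ versus $O(1)$ terms, the use of $\mathcal{K}(0,0,\alpha\zeta,0)=0$, and the observation that $\mathcal{P}(\varphi;0)=0$ are all exactly as intended.
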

\begin{proof}
    This proposition can be shown by repeating the arguments of Proposition~\ref{recover weak momentum balance}.
\end{proof}
    
\begin{prop}\label{recover strong GT}
 Let the phase-field potential $W=W_\mathrm{dw}+W_\mathrm{sing}$ as in \eqref{def-W}.  
    Suppose that the regularity weight $\mathcal{K}$ satisfies $\mathcal{K}(0,0,0,\tilde{\mu})=0$ for all $\tilde{\mu}\in C_{0}^{\infty}(\Omega\times[0,T))$. Then an energy-variational solution $(v,S,\varphi,\mu,E)$ of type $\mathcal{K}$ is a strong solution of the Gibbs-Thomson law~\eqref{sys: second CHE}, i.e., 
    \begin{equation}\label{strong TG law}
        \begin{aligned}
            \mu&=-\Delta\varphi+W_{\mathrm{dw}}^{\prime}(\varphi)+\beta\quad \text{ and \; $\beta\in\partial W_\mathrm{sing}(\varphi)$ }
        \end{aligned}
    \end{equation}
     is satisfied for a.e. $(x,t)\in\Omega\times[0,T)$.
\end{prop}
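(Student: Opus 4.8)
The plan is to mimic the strategy of Proposition \ref{recover weak momentum balance}: isolate the line of \eqref{EVS inequality} that carries the chemical-potential test function $\tilde\mu$ by scaling it up, and then observe that the resulting identity is exactly the weak form of the Gibbs--Thomson law. Concretely, I would choose in \eqref{EVS inequality} the test functions $\tilde v\equiv 0$, $\tilde S\equiv 0$, $\tilde\varphi\equiv 0$ and $\tilde\mu=\alpha\zeta$ with $\alpha>0$ and $\zeta\in C_0^\infty(\Omega\times[0,T))$. Under the hypothesis $\mathcal{K}(0,0,0,\tilde\mu)=0$ the regularity-weight term on the right-hand side vanishes, and the external-force term $\int_s^t\langle f,v-\tilde v\rangle_{H^1}$ no longer depends on $\alpha$; all remaining terms except the last integral also are independent of $\alpha$. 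Dividing by $\alpha$ and sending $\alpha\to\infty$, only the line
$$
-\int_{s}^{t}\int_{\Omega}\mu\tilde\mu-\bigl(-\varepsilon\Delta\varphi+\varepsilon^{-1}W_{\mathrm{dw}}'(\varphi)+\varepsilon^{-1}\beta\bigr)\tilde\mu\,\dd x\,\dd\tau
$$
survives, giving $-\int_s^t\int_\Omega\bigl(\mu-(-\Delta\varphi+W_{\mathrm{dw}}'(\varphi)+\beta)\bigr)\zeta\,\dd x\,\dd\tau\le 0$ (after setting $\varepsilon=1$). Repeating the argument with $\tilde\mu=-\alpha\zeta$ yields the reverse inequality, so the integral vanishes for every $\zeta\in C_0^\infty(\Omega\times[0,T))$ and every $s<t$; the fundamental lemma of the calculus of variations then forces $\mu=-\Delta\varphi+W_{\mathrm{dw}}'(\varphi)+\beta$ a.e.\ in $\Omega\times[0,T)$.

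It remains to record that $\beta\in\partial W_{\mathrm{sing}}(\varphi)$ a.e.\ in $\Omega\times(0,T)$; but this is not something to be \emph{derived} here — it is built into the very definition of an energy-variational solution, which requires $\beta(x,t)\in\partial W_{\mathrm{sing}}(\varphi(x,t))$ a.e.\ as part of the data accompanying \eqref{EVS inequality}. Hence \eqref{strong TG law} follows. For the regularity claim: $W_{\mathrm{dw}}'(\varphi)\in L^\infty$ because $W_{\mathrm{dw}}\in C^2(\mathbb{R})$ and $|\varphi|\le 1$; $\Delta\varphi\in L^2_{\mathrm{loc}}((0,T);L^2(\Omega))$ from the $H^2$-regularity of $\varphi$ in \eqref{EVS-reg}; and $\mu\in L^2_{\mathrm{loc}}((0,T);H^1(\Omega))$, so the identity also shows $\beta=\mu+\Delta\varphi-W_{\mathrm{dw}}'(\varphi)\in L^2_{\mathrm{loc}}((0,T);L^2(\Omega))$, consistent with $\beta$ being a measurable selection of the subdifferential.

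The only genuinely delicate point is the passage $\alpha\to\infty$: one must be sure that \emph{every} $\alpha$-dependence is linear and confined to the single $\tilde\mu$-line. This is clear upon inspection of \eqref{EVS inequality} — the kinetic pairing $\langle\rho v,\tilde v\rangle$, the stress pairing $\langle S,\tilde S\rangle$, the dissipation terms, $\mathcal{P}(\varphi;\tilde S)$, and all convective and coupling terms involve only $\tilde v,\tilde S,\tilde\varphi$, which we have set to zero, so they contribute $0$ uniformly in $\alpha$; and the force term and the (vanishing) $\mathcal{K}$-term do not see $\tilde\mu$. Thus dividing by $\alpha$ and letting $\alpha\to\infty$ is legitimate and produces precisely the stated one-sided inequality. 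I would note in passing that, just as in Proposition \ref{recover weak momentum balance}, one does not even need to let $\alpha\to\infty$ along a sequence — any unbounded set of positive $\alpha$ suffices, and the two sign choices together pin down the equality. This mirrors the proof of Proposition \ref{recover weak CH}, so the write-up can legitimately be compressed to "repeat the arguments of Proposition \ref{recover weak momentum balance} with the roles of the test functions interchanged, and apply the fundamental lemma of the calculus of variations."
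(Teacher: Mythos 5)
Your proposal is correct and follows essentially the same route as the paper: the paper's proof is only a one-line indication to repeat the scaling argument of Proposition~\ref{recover weak momentum balance} and then invoke the fundamental lemma of the calculus of variations, which is precisely what you carry out, including the observation that $\beta\in\partial W_{\mathrm{sing}}(\varphi)$ is part of the definition of an energy-variational solution rather than something to prove.
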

\begin{proof}
    This proposition can be shown by following the same idea of Proposition~\ref{recover weak momentum balance} and with the help of the fundamental lemma of the calculus of variations.
\end{proof}
\begin{prop}
    Let $(v,S,\varphi,\mu,E)$ be an energy-variational solution of type $\mathcal{K}$. Let $\mathcal{K}$, $\mathcal{L}$ be regularity weights with the property $\mathcal{K}(\tilde{v},\tilde{S},\tilde{\varphi},\tilde{\mu})\leq\mathcal{L}(\tilde{v},\tilde{S},\tilde{\varphi},\tilde{\mu})$ for all $(\tilde{v},\tilde{S},\tilde{\varphi},\tilde{\mu})\in \mathfrak{T}$ and a.e. $t\in(0,T)$. Then $(v,S,\varphi,\mu,E)$ is also a energy-variational solution of type $\mathcal{L}$.
\end{prop}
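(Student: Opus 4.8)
The plan is to observe that the regularity weight $\mathcal{K}$ enters the definition of an energy-variational solution only through the error term on the right-hand side of \eqref{EVS inequality}, and that this term depends \emph{monotonically} on the weight because the factor it multiplies is nonnegative. First I would recall from the regularity requirement \eqref{EVS-reg} that $E(\tau)\geq\mathcal{E}(v(\tau),S(\tau),\varphi(\tau))$ for a.e.\ $\tau\in[0,T)$, so the defect $E(\tau)-\mathcal{E}(v(\tau),S(\tau),\varphi(\tau))$ is nonnegative a.e. Since by hypothesis $\mathcal{K}(\tilde{v},\tilde{S},\tilde{\varphi},\tilde{\mu})\leq\mathcal{L}(\tilde{v},\tilde{S},\tilde{\varphi},\tilde{\mu})$ for all $(\tilde{v},\tilde{S},\tilde{\varphi},\tilde{\mu})\in\mathfrak{T}$ and a.e.\ $t\in(0,T)$, multiplying by the nonnegative defect and integrating over $[s,t]$ gives
\[
\int_{s}^{t}\mathcal{K}(\tilde{v},\tilde{S},\tilde{\varphi},\tilde{\mu})\bigl(E(\tau)-\mathcal{E}(v,S,\varphi)\bigr)\dd{\tau}
\;\leq\;
\int_{s}^{t}\mathcal{L}(\tilde{v},\tilde{S},\tilde{\varphi},\tilde{\mu})\bigl(E(\tau)-\mathcal{E}(v,S,\varphi)\bigr)\dd{\tau}.
\]

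Next I would combine this with the fact that $(v,S,\varphi,\mu,E)$ satisfies \eqref{EVS inequality} with weight $\mathcal{K}$: the entire left-hand side of \eqref{EVS inequality}, as well as the $f$-term $\int_s^t\langle f,v-\tilde{v}\rangle_{H^1}\dd{\tau}$ on the right-hand side, is independent of the choice of regularity weight. Chaining the two inequalities then yields \eqref{EVS inequality} with $\mathcal{L}$ in place of $\mathcal{K}$, for the same class of test functions and the same set of pairs $s<t\in(0,T)$, including $s=0$ with $(v(0),S(0),\varphi(0))=(v_0,S_0,\varphi_0)$ and the same $\beta(x,t)\in\partial W_\mathrm{sing}(\varphi(x,t))$ from \eqref{def-subdiff-Wsing}. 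Since $\mathcal{L}$ is still a regularity weight in the sense of \eqref{regularity weight} and the regularity \eqref{EVS-reg} is unchanged, $(v,S,\varphi,\mu,E)$ is an energy-variational solution of type $\mathcal{L}$.

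The argument is essentially immediate and there is no genuine obstacle; the one point worth flagging is that the monotonicity step rests entirely on the sign condition $E-\mathcal{E}\geq0$, i.e.\ on the requirement that the auxiliary energy variable majorize the physical energy. This is precisely why the defect is added to the formulation with the prescribed sign, and it is also why no Gronwall-type estimate is needed here—enlarging the weight can only relax the variational inequality—illustrating the robustness of the energy-variational formulation under changes of the regularity weight.
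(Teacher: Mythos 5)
Your proof is correct and takes exactly the same route as the paper: the paper's one-line proof also rests solely on the sign condition $E(\tau)\geq\mathcal{E}(v(\tau),S(\tau),\varphi(\tau))$, which makes enlarging the weight from $\mathcal{K}$ to $\mathcal{L}$ only relax the right-hand side of \eqref{EVS inequality}. You have simply spelled out the monotonicity step that the paper leaves implicit.
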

\begin{proof}
    Notice that this proposition is a direct consequence of the fact $E(\tau)\geq\mathcal{E}(v(\tau),S(\tau),\varphi(\tau))$ for a.e. $\tau\in(0,T)$. 
\end{proof}

Now, to see that an energy-variational solution satisfies a relative energy-dissipation estimate and to see the connection with dissipative solutions, we follow the general approach introduced in Section~\ref{intro:general pde setting} and recall from~\cite{arxiv:2509.25508} the definitions of the relative energy, relative dissipation, and the system operator for system \eqref{sys:two phase} as follows.

\begin{subequations}
We define the relative kinetic energy as 
\begin{equation}\label{defi: relative kinetic energy}
    \begin{aligned}        
    \mathcal{R}_{\mathrm{kin}}\left( \varphi;v|\tilde{v}\right):=\int_{\Omega}\rho(\varphi)\frac{\abs{v-\tilde{v}}^{2}}{2} \dd{x}=\mathcal{E}_{\mathrm{kin}}(\varphi,v-\tilde{v}).
    \end{aligned}
\end{equation}
Moreover, the relative elastic energy is given by
\begin{equation}\label{defi: relative elastic energy}
    \mathcal{R}_{\mathrm{el}}(S|\tilde{S}):=\int_{\Omega}\frac{|S-\tilde{S}|^{2}}{2}\dd{x}=\mathcal{E}_{\mathrm{el}}(S-\tilde{S}).
\end{equation}
In addition, we define the relative phase-field energy as 
\begin{equation}\label{defi: relative phase-field energy}
\begin{aligned}
    \mathcal{R}_{\mathrm{pf},\kappa}(\varphi|\tilde{\varphi})&
    :=\mathcal{E}_{\mathrm{pf},\kappa}({\varphi})
    -\mathcal{E}_{\mathrm{pf},\kappa}(\tilde{\varphi})
    -\mathrm{D}\mathcal{E}_{\mathrm{pf},\kappa}(\tilde{\varphi})(\varphi-\tilde{\varphi})
    \\
    &=\varepsilon\int_{\Omega}\frac{\abs{\nabla\varphi-\nabla\tilde{\varphi}}^{2}}{2}\dd{x}+\varepsilon^{-1}\int_{\Omega}W_{\kappa}(\varphi)-W_{\kappa}(\tilde{\varphi})-W_{\kappa}^{\prime}(\tilde{\varphi})(\varphi-\tilde{\varphi})\dd{x}
\end{aligned}
\end{equation}
where
\begin{equation*}
    W_{\kappa}(\varphi):=W(\varphi)+\frac{\kappa}{2}\abs{\varphi}^{2}
\end{equation*}
with $\kappa\geq0$ from \eqref{assumption on singular potential}.  
Therefore, the relative total energy is given by
\begin{equation}\label{defi: relative total energy}
\begin{aligned}
\mathcal{R}\left(v,S,\varphi\big|\tilde{v},\tilde{S},\tilde{\varphi}\right):=\mathcal{R}_{\mathrm{kin}}(\varphi;v|\tilde{v})+\mathcal{R}_{\mathrm{el}}(S|\tilde{S})+\mathcal{R}_{\mathrm{pf},\kappa}(\varphi|\tilde{\varphi}).
\end{aligned}
\end{equation}
Next we introduce the system operator for system \eqref{sys:two phase} for the case that the singular potential is logarithmic, 
i.e.\ $W=W_\mathrm{dw}+W_\mathrm{sg}$  in \eqref{def-W}. This requires, in particular, that the smooth test functions $\tilde\varphi$ take values in $(-1,1),$ only. Hence, in this setting,  the system operator for system \eqref{sys:two phase} is given by 
\begin{equation}\label{defi: system operator}
    \mathcal{A}_{\gamma}=\left( \mathcal{A}^{(1)},\mathcal{A}_{\gamma}^{(2)},\mathcal{A}^{(3)}\right)^{\top}   
\end{equation}
where
\begin{align}
    \mathcal{A}^{(1)}(\tilde{v},\tilde{S},\tilde{\varphi})
    &:=
    \partial_{t}(\tilde{\rho}\tilde{v})
    +\divge{\tilde{v}\otimes(\tilde{\rho}\tilde{v}+\tilde{J})}
    -\divge{\eta(\tilde{\varphi})\tilde{S}+2\nu(\tilde{\varphi})\sym{\nabla\tilde{v}}}
    -\tilde{\mu}\nabla\tilde{\varphi}- f,
            \\
    \mathcal{A}_{\gamma}^{(2)}(\tilde{v},\tilde{S},\tilde{\varphi})
    &:=
    \partial_{t}\tilde{S}+\tilde{v}\cdot\nabla\tilde{S}+\left( \tilde{S}\skw{\nabla\tilde{v}}-\skw{\nabla\tilde{v}}\tilde{S}\right)-\gamma\Delta\tilde{S}-\eta(\tilde{\varphi})\sym{\nabla\tilde{v}},
            \\
    \mathcal{A}^{(3)}(\tilde{v},\tilde{\varphi})
    &:=
    \partial_{t}\tilde{\varphi}+\tilde{v}\cdot\nabla\tilde{\varphi}-\divge{m(\tilde{\varphi})\nabla\tilde{\mu}},
            \\
    \tilde{\mu}
    &:=
    -\Delta \tilde{\varphi}+W_{\mathrm{dw}}^{\prime}(\tilde{\varphi})+ W_\mathrm{sg}^{\prime}(\tilde\varphi),
\end{align}
for any test function 
$\tilde\varphi\in C_0^\infty(\Omega\times[0,T))$ with $|\tilde\varphi|<1$.   
In addition, the relative dissipation is defined as 
\begin{equation}\label{defi: relative dissipation}
    \begin{aligned}    \mathcal{W}_{\gamma}^{(\mathcal{K})}\left(v,S,\varphi|\tilde{v},\tilde{S},\tilde{\varphi}\right):=
        &\int_{\Omega}\gamma\abs{\nabla S-\nabla\tilde{S}}^{2}\dd{x}
    +\int_{\Omega}m(\varphi)\abs{\nabla\mu-\nabla\tilde{\mu}}^{2}\dd{x}
    \\
    &+\int_{\Omega}2\nu(\varphi)\abs{\sym{\nabla v}-\sym{\nabla\tilde{v}}}^{2}
    +2( \nu(\varphi)-\nu(\tilde{\varphi}))\sym{\nabla\tilde{v}}:\left({\nabla v}-{\nabla\tilde{v}}\right)\dd{x}
    \\
        &+\int_{\Omega}m(\varphi)\nabla\tilde{\mu}:(\nabla\mu-\nabla\tilde{\mu})+ \divge{m(\tilde{\varphi})\nabla\tilde{\mu}}\left(-\Delta(\varphi-\tilde{\varphi})+W^{\prime\prime}(\tilde{\varphi})(\varphi-\tilde{\varphi})\right)\dd{x}
    \\
        &+\int_{\Omega}(v-\tilde{v})\otimes(\rho v-\tilde{\rho}\tilde{v} + J-\tilde{J}):\nabla\tilde{v}
        +(\rho-\tilde{\rho})(v-\tilde{v})\cdot\partial_{t}\tilde{v}\dd{x}
    \\
        &-\int_{\Omega}(\eta(\varphi)-\eta(\tilde{\varphi}))(S-\tilde{S}):\nabla\tilde{v}
		+(\eta(\varphi)-\eta(\tilde{\varphi}))\tilde{S}:({\nabla v}-{\nabla\tilde{v}})\dd{x}
	\\
		&-\int_{\Omega}(S-\tilde{S})\otimes(v-\tilde{v})\threedotsbin\nabla\tilde{S} 
        +2 
        (S-\tilde{S})\skw{\nabla v-\nabla\tilde{v}}
        :{\tilde{S}} \dd{x}
	\\
		&-\int_{\Omega}\tilde{\mu}(\nabla\varphi-\nabla\tilde{\varphi})\cdot(v-\tilde{v})
		-\varepsilon(\nabla\varphi-\nabla\tilde{\varphi})\otimes(\nabla\varphi-\nabla\tilde{\varphi}):\nabla\tilde{v} \dd{x}
	\\
		&+\int_{\Omega}\frac{\kappa}{\varepsilon}(\nabla\mu-\nabla\tilde{\mu})\cdot(\nabla\varphi-\nabla\tilde{\varphi})
		+\frac{\kappa}{\varepsilon}(v-\tilde{v})\cdot\nabla\tilde{\varphi}(\varphi-\tilde{\varphi}) \dd{x}
	\\
		&+\mathcal{K}(\tilde{v},\tilde{S},\tilde{\varphi})\mathcal{R}(v,S,\varphi|\tilde{v},\tilde{S},\tilde{\varphi}).
\end{aligned}
\end{equation}
\end{subequations}
Furthermore, as~\eqref{intro:re up}, we define the relative energy involving the auxiliary energy variable $E$ as follows 
\begin{equation*}
    \widetilde{\mathcal{R}}(v,S,\varphi,E|\tilde{v},\tilde{S},\tilde{\varphi}):= E-\mathcal{E}(v,S,\varphi)+\mathcal{R}(v,S,\varphi|\tilde{v},\tilde{S},\tilde{\varphi}).
\end{equation*}

Notice that, in our setting, the spaces
\begin{equation}
\label{def-VY}
    \begin{aligned}
        \mathbb{V}&:=L_{\mathrm{div}}^{2}(\Omega)\times L_{\mathrm{sym,Tr}}^{2}(\Omega)\times L^{2}(\Omega),\\
        \mathbb{Y}&:= H_{0,\mathrm{div}}^{1}(\Omega)\times H_{\mathrm{sym,Tr}}^{1}(\Omega)\times H^{1}(\Omega).
    \end{aligned}
\end{equation}

As in~\cite[][Definition 3.3]{arxiv:2509.25508}, we introduce the dissipative solutions for the system~\eqref{sys:two phase} with logarithmic potentials as follows.

\begin{defi}[Dissipative solution for system~\eqref{sys:two phase} with a logarithmic potential]\label{defi:ds singular}
    Let $\gamma\geq0$. Let the assumptions~\ref{ASM:domain}-\ref{ASM:Double well potential} hold true. Suppose that the phase-field potential is given by $W=W_{\mathrm{dw}}+W_{\mathrm{sg}}$ as in \eqref{def-W}. Assume further that $W\in C^{3}(-1,1)$. Let $\mathcal{K}$ be a regularity weight satisfying~\eqref{regularity weight}. A quadruplet $(v,S,\varphi,\mu)$ is called a dissipative solution of type $\mathcal{K}$ of the two-phase system~\eqref{sys:two phase} with a logarithmic potential $W_\mathrm{sg}$ if the following properties are satisfied:\\
    \begin{subequations}
    1.\ The quadruplet $(v,S,\varphi,\mu)$ has the following regularity:
    \begin{equation}\label{defi: ds regularity}
    \begin{aligned}    
		v&\in L_{\mathrm{loc}}^{\infty}([0,T),L_{\mathrm{div}}^{2}(\Omega))\cap L_{\mathrm{loc}}^{2}([0,T),H_{0,\mathrm{div}}^{1}(\Omega)),\\
		S&\in L_{\mathrm{loc}}^{\infty}([0,T),L_{\mathrm{sym,Tr}}^{2}(\Omega)),\\
		\varphi&\in L_{\mathrm{loc}}^{\infty}([0,T);H^{1}(\Omega))\cap L_{\mathrm{loc}}^{2}([0,T);H^{2}(\Omega)), W^{\prime}(\varphi)\in L_{\mathrm{loc}}^{2}([0,T);L^{2}(\Omega)),\\
		\mu&\in L_{\mathrm{loc}}^{2}([0,T);H^{1}(\Omega)).
    \end{aligned}
    \end{equation}
    2.\ With the relative energy $\mathcal{R}$ from~\eqref{defi: relative total energy}, the system operator $\mathcal{A}_{\gamma}$ from~\eqref{defi: system operator}, the relative dissipation $\mathcal{W}_{\gamma}^{(\mathcal{K})}$ from~\eqref{defi: relative dissipation}, and the space $\mathbb{Y}$ from \eqref{def-VY}, the quadruplet $(v,S,\varphi,\mu)$ satisfies the following relative energy-dissipation estimate:
    \begin{equation}\label{defi: ds estimate}
    \begin{aligned}
			&\mathcal{R}(v(t),S(t),\varphi(t)|\tilde{v}(t),\tilde{S}(t),\tilde{\varphi}(t))
	\\
			+\int_{0}^{t}\Big(& \left\langle \mathcal{A}_{\gamma}(\tilde{v},\tilde{S},\tilde{\varphi}),
			\begin{pmatrix}
				v-\tilde{v}\\
				S-\tilde{S}\\
				-\Delta(\varphi-\tilde{\varphi})+W^{\prime\prime}(\tilde{\varphi})(\varphi-\tilde{\varphi})+\frac{\kappa}{\varepsilon}(\varphi-\tilde{\varphi})-\frac{\rho_{2}-\rho_{1}}{2}(v-\tilde{v})\cdot\tilde{v}
			\end{pmatrix}
			\right\rangle_{\!\!\!\mathbb Y} 
		\\
			&+\mathcal{P}(\varphi;S)-\mathcal{P}(\varphi;\tilde{S})+\mathcal{W}_{\gamma}^{(\mathcal{K})}(v,S,\varphi|\tilde{v},\tilde{S},\tilde{\varphi})\Big)
			\exp\left(\int_{s}^{t} \mathcal{K}(\tilde{v},\tilde{S},\tilde{\varphi})\dd{\tau} \right) \dd{s}
		\\
			\leq &\mathcal{R}(v_{0},S_{0},\varphi_{0}|\tilde{v}(0),\tilde{S}(0),\tilde{\varphi}(0)) \exp\left(\int_{0}^{t} \mathcal{K}(\tilde{v},\tilde{S},\tilde{\varphi}) \dd{s} \right)
    \end{aligned}
    \end{equation}
     for all $\tilde{v}\in C_{0,\mathrm{div}}^{\infty}(\Omega\times[0,T))$, $\tilde{S}\in C_{0,\mathrm{sym,Tr}}^{\infty}(\Omega\times[0,T))$ and  $\tilde{\varphi}\in C_{0}^{\infty}(\Omega\times[0,T))$ with $\abs{\tilde{\varphi}}<1$ and for a.e. $t\in(0,T)$. 
    \end{subequations} 
\end{defi}
Building on the framework introduced in Section~\ref{intro:general pde setting} and following the same ideas as~\cite[Proposition 3.6]{arxiv:2509.25508} and~\cite[Proposition 3.6]{ALREVS}, we obtain the following relative energy-dissipation estimate for energy-variational solutions with a logarithmic potential.

\begin{prop} \label{relative energy-dissipation inequality for EVS}
Let $\gamma\geq0$. Suppose that the phase-field potential is given by $W=W_{\mathrm{dw}}+W_{\mathrm{sg}}$. Moreover, further assume that $W\in C^{3}(-1,1)$. For an energy-variational solution $(v,S,\varphi,\mu,E)$, it holds   
    \begin{equation}\label{relative energy-dissipation estimate}
    \begin{aligned}
			&\widetilde{\mathcal{R}}(v,S,\varphi,E|\tilde{v},\tilde{S},\tilde{\varphi})\Big|_{s}^{t}
	\\
			&+\int_{s}^{t} \left\langle \mathcal{A}_{\gamma}(\tilde{v},\tilde{S},\tilde{\varphi}),
			\begin{pmatrix}
				v-\tilde{v}\\
				S-\tilde{S}\\
				-\Delta(\varphi-\tilde{\varphi})+W^{\prime\prime}(\tilde{\varphi})(\varphi-\tilde{\varphi})+\frac{\kappa}{\varepsilon}(\varphi-\tilde{\varphi})-\frac{\rho_{2}-\rho_{1}}{2}(v-\tilde{v})\cdot\tilde{v}
			\end{pmatrix}
			\right\rangle\dd{\tau}
		\\
			&+\int_{s}^{t}\mathcal{P}(\varphi;S)-\mathcal{P}(\varphi;\tilde{S})+\mathcal{W}_{\gamma}^{(\mathcal{K})}(v,S,\varphi|\tilde{v},\tilde{S},\tilde{\varphi})\dd{\tau}		
		\\
			\leq&\int_{s}^{t}\mathcal{K}(\tilde{v},\tilde{S},\tilde{\varphi})\widetilde{\mathcal{R}}(v,S,\varphi,E|\tilde{v},\tilde{S},\tilde{\varphi})\dd{\tau}.
    \end{aligned}
    \end{equation}
    for all $s<t\in(0,T)$ including $s=0$ with $(v(0),S(0),\varphi(0))=(v_{0},S_{0},\varphi_{0})$, for all $\tilde{v}\in C_{0,\mathrm{div}}^{\infty}(\Omega\times[0,T))$, $\tilde{S}\in C_{0,\mathrm{sym,Tr}}^{\infty}(\Omega\times[0,T))$ and  $\tilde{\varphi}\in C_{0}^{\infty}(\Omega\times[0,T))$ with $\abs{\tilde{\varphi}}<1$.

     By applying a version of Gronwall's inequality and estimating $E\geq\mathcal{E}(v,S,\varphi)$ to~\eqref{relative energy-dissipation estimate}, one can see that every energy-variational solution is a dissipative solution in the sense of Definition~\ref{defi:ds singular} in the case that $W=W_{\mathrm{dw}}+W_{\mathrm{sg}}$.
\end{prop}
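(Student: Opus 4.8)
The strategy is to read \eqref{EVS inequality} as the concrete form of the abstract energy-variational inequality \eqref{intro:evs} and to turn it into the relative energy-dissipation estimate \eqref{relative energy-dissipation estimate} by the same Taylor expansion of the energy around the test state that leads from \eqref{intro:evs} to \eqref{intro:rede}, cf.\ \cite[Proposition~3.6]{ALREVS}; the explicit term rearrangement is essentially the one already carried out for the dissipative-solution formulation of this system in \cite[Proposition~3.6]{arxiv:2509.25508}. The decisive preparatory step is to specialise the still-free test function $\tilde\mu$ in \eqref{EVS inequality}: given $(\tilde v,\tilde S,\tilde\varphi)\in\mathfrak T$ with $|\tilde\varphi|<1$, I set $\tilde\mu:=-\Delta\tilde\varphi+W_{\mathrm{dw}}'(\tilde\varphi)+W_{\mathrm{sg}}'(\tilde\varphi)$, exactly as in the definition of $\mathcal A_\gamma$ in \eqref{defi: system operator}. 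Since $\tilde\varphi$ is smooth, compactly supported, and takes values in a compact subinterval of $(-1,1)$, and since $W=W_{\mathrm{dw}}+W_{\mathrm{sg}}\in C^3(-1,1)$, this $\tilde\mu$ is admissible, and $\mathcal K(\tilde v,\tilde S,\tilde\varphi,\tilde\mu)$ becomes a function of $(\tilde v,\tilde S,\tilde\varphi)$ alone, which I keep writing $\mathcal K(\tilde v,\tilde S,\tilde\varphi)$.

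With this choice, what remains is a careful but structurally routine rearrangement of \eqref{EVS inequality}, performed block by block and using that $\rho$ is affine in $\varphi$. In the momentum block one completes the square in the kinetic terms to isolate the relative kinetic energy (inside $\widetilde{\mathcal R}$), the viscous and convection remainders of $\mathcal W_\gamma^{(\mathcal K)}$ (using the dissipation $2\nu(\varphi)|\sym{\nabla v}|^2$ and the $\nu$-mismatch), and the pairing of $\mathcal A^{(1)}(\tilde v,\tilde S,\tilde\varphi)$ with $v-\tilde v$; the Korteweg term $\varepsilon\nabla\varphi\otimes\nabla\varphi:\nabla\tilde v$ and the force term are treated as in \cite{arxiv:2509.25508}. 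In the stress block, $\gamma|\nabla S|^2$ splits off $\gamma|\nabla S-\nabla\tilde S|^2$ plus a cross term, the Zaremba--Jaumann rotation term is distributed between the mismatch contributions of $\mathcal W_\gamma^{(\mathcal K)}$ and the pairing of $\mathcal A_\gamma^{(2)}$ with $S-\tilde S$, while the plastic term $\mathcal P(\varphi;S)-\mathcal P(\varphi;\tilde S)$ is not expanded but passed through unchanged, since it enters in this very form --- through the subdifferential inequality for the convex $\mathcal P(\varphi;\cdot)$ --- in both \eqref{EVS inequality} and \eqref{relative energy-dissipation estimate}. In the Cahn--Hilliard block one combines the $\varphi$-line, the chemical-potential line, and the dissipation $m(\varphi)|\nabla\mu|^2$ of \eqref{EVS inequality} with the above choice of $\tilde\mu$, and Taylor-expands the phase-field energy: this produces the relative phase-field energy $\mathcal R_{\mathrm{pf},\kappa}(\varphi|\tilde\varphi)$ --- built from the $\kappa$-shifted potential $W_\kappa=W+\tfrac\kappa2|\cdot|^2$, $\kappa$ as in \eqref{assumption on singular potential} --- inside $\widetilde{\mathcal R}$, the mobility-mismatch terms $m(\varphi)\nabla\tilde\mu\cdot(\nabla\mu-\nabla\tilde\mu)$ and $\mathrm{div}(m(\tilde\varphi)\nabla\tilde\mu)\bigl(-\Delta(\varphi-\tilde\varphi)+W''(\tilde\varphi)(\varphi-\tilde\varphi)\bigr)$ of $\mathcal W_\gamma^{(\mathcal K)}$ --- here $W\in C^3(-1,1)$ is needed so that $\mathrm{div}(m(\tilde\varphi)\nabla\tilde\mu)$, which contains $W'''(\tilde\varphi)|\nabla\tilde\varphi|^2$, is well defined --- and the pairing of $\mathcal A^{(3)}(\tilde v,\tilde\varphi)$ with the shifted test quantity $-\Delta(\varphi-\tilde\varphi)+W''(\tilde\varphi)(\varphi-\tilde\varphi)+\tfrac\kappa\varepsilon(\varphi-\tilde\varphi)-\tfrac{\rho_2-\rho_1}{2}(v-\tilde v)\cdot\tilde v$, whose last, mixed term records the $\varphi$-dependence of $\rho$ in the kinetic energy.

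It remains to reorganise the weights and the endpoints. Adding $\int_s^t\mathcal K(\tilde v,\tilde S,\tilde\varphi)\,\mathcal R(v,S,\varphi|\tilde v,\tilde S,\tilde\varphi)\,\mathrm d\tau$ to both sides is neutral: on the left it completes $\mathcal W_\gamma^{(\mathcal K)}$, which contains this summand by \eqref{defi: relative dissipation}; on the right it combines with the original term $\int_s^t\mathcal K(\tilde v,\tilde S,\tilde\varphi)\bigl(E-\mathcal E(v,S,\varphi)\bigr)\,\mathrm d\tau$ of \eqref{EVS inequality} into $\int_s^t\mathcal K(\tilde v,\tilde S,\tilde\varphi)\,\widetilde{\mathcal R}(v,S,\varphi,E|\tilde v,\tilde S,\tilde\varphi)\,\mathrm d\tau$, since $\widetilde{\mathcal R}=E-\mathcal E+\mathcal R$. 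Meanwhile the boundary term of \eqref{EVS inequality} together with the time-derivative contributions of the weak-form integrals recombine into $\widetilde{\mathcal R}|_s^t$. This proves \eqref{relative energy-dissipation estimate}. For the concluding statement, I argue as in the passage from \eqref{intro:rede} to \eqref{intro:rede with gronwall}: \eqref{relative energy-dissipation estimate} is an integrated differential inequality for the $BV$-function $t\mapsto\widetilde{\mathcal R}(v(t),S(t),\varphi(t),E(t)|\tilde v(t),\tilde S(t),\tilde\varphi(t))$, so multiplying by $\exp\!\bigl(-\int_0^t\mathcal K(\tilde v,\tilde S,\tilde\varphi)\,\mathrm d\tau\bigr)$, integrating, and invoking the $BV$-version of Gronwall's inequality yields the exponentially weighted estimate \eqref{defi: ds estimate}, where $\widetilde{\mathcal R}\ge\mathcal R$ on the left because $E\ge\mathcal E$ by \eqref{EVS-reg}, and where at $s=0$ the energy variable is identified with $\mathcal E(v_0,S_0,\varphi_0)$ so that the right-hand side reduces to $\mathcal R(v_0,S_0,\varphi_0|\tilde v(0),\tilde S(0),\tilde\varphi(0))$. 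Together with the regularity \eqref{EVS-reg} --- which, since $\partial W_{\mathrm{sg}}$ is single-valued on $(-1,1)$ by \eqref{def-subdiff-Wsing} so that $\beta=W_{\mathrm{sg}}'(\varphi)$ is determined, provides the regularity \eqref{defi: ds regularity} --- this shows that $(v,S,\varphi,\mu)$ is a dissipative solution in the sense of Definition~\ref{defi:ds singular}.

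The main difficulty is not conceptual but a matter of bookkeeping: one must verify that every single term produced by the block-by-block rearrangement of \eqref{EVS inequality} matches the prescribed shape of $\mathcal W_\gamma^{(\mathcal K)}$ in \eqref{defi: relative dissipation} and of the $\mathcal A_\gamma$-pairing in \eqref{relative energy-dissipation estimate}. The most delicate part is the Cahn--Hilliard block, where one must disentangle the chemical potential $\mu$ from the singular contribution $\beta=W_{\mathrm{sg}}'(\varphi)$, keep track of the non-constant mobility $m(\varphi)$ (which is responsible for the extra mismatch terms in \eqref{defi: relative dissipation} that are absent in the constant-mobility setting of \cite{arxiv:2509.25508}), and check that the Taylor remainders of the phase-field energy, shifted by $\tfrac\kappa2|\cdot|^2$, reproduce $\mathcal R_{\mathrm{pf},\kappa}$ and the $W$-dependent terms of $\mathcal W_\gamma^{(\mathcal K)}$ exactly.
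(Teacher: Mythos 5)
Your plan is correct and coincides with what the paper itself does: the paper gives no explicit computation for Proposition~\ref{relative energy-dissipation inequality for EVS} but points to~\cite[Proposition 3.6]{ALREVS} and~\cite[Proposition 3.6]{arxiv:2509.25508}, and your reconstruction --- specialise the free test function to $\tilde\mu=-\Delta\tilde\varphi+W'(\tilde\varphi)$, Taylor-expand the energy around the test state block by block, match the remainders with $\widetilde{\mathcal R}$, $\mathcal W_\gamma^{(\mathcal K)}$ and the $\mathcal A_\gamma$-pairing, and finally complete $\mathcal W_\gamma^{(\mathcal K)}$ by adding $\int_s^t\mathcal K\,\mathcal R$ to both sides so that the right-hand weight becomes $\int_s^t\mathcal K\,\widetilde{\mathcal R}$ --- is exactly the argument those references carry out. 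The Gronwall step and the replacement $\widetilde{\mathcal R}\ge\mathcal R$ (from $E\ge\mathcal E$) at the end are also handled as the paper intends. One small point you gloss over: with $\tilde\varphi\in C_0^\infty(\Omega\times[0,T))$, the induced $\tilde\mu=-\Delta\tilde\varphi+W'(\tilde\varphi)$ equals the constant $W'(0)$ outside $\operatorname{supp}\tilde\varphi$, so strictly speaking $\tilde\mu\notin C_0^\infty(\Omega\times[0,T))$ unless $W'(0)=0$; this is harmless because $\tilde\mu$ enters \eqref{EVS inequality} only through spatial integrals without time derivatives (so a constant shift or a density argument suffices), and because the $\mu$-line is annihilated anyway by Proposition~\ref{recover strong GT} once the regularity weight \eqref{regularity weight for non-regularized} is seen to be $\tilde\mu$-independent --- a fact worth making explicit in your Cahn--Hilliard block.
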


To define the dissipative solutions for the system~\eqref{sys:two phase} with a double-obstacle potential, we introduce the system operator for the case that the singular potential is indicator function, i.e., $W=W_{\mathrm{dw}}+I_{[-1,1]}$ in~\eqref{def-Wsing}. This allows that the smooth test functions $\tilde{\varphi}$ take values in $[-1,1]$. Hence, in this setting, the system operator for system~\eqref{sys:two phase} is given as in~\eqref{defi: system operator}, expect for the term
\begin{equation*}
    \tilde{\mu}:=-\Delta\tilde{\varphi}+W_{\mathrm{dw}}^{\prime}(\tilde{\varphi})\,,
\end{equation*}
where we use that $0\in\partial I_{[-1,1]}(\tilde{\varphi})$ and  that we have the freedom to select the element of the subdifferential $\partial I_{[-1,1]}(\tilde\varphi)$ of a test function $\tilde\varphi$ to be $0$.

Now, in analogy with Definition~\ref{defi:ds singular}, we now define the dissipative solutions for the system~\eqref{sys:two phase} with double-obstacle potentials.

\begin{defi}[Dissipative solution for system~\eqref{sys:two phase} with a double-obstacle potential]\label{defi:ds double well}
\phantom{x}\\
    Let $\gamma\geq0$. Let the assumptions~\ref{ASM:domain}-\ref{ASM:Double well potential} hold true. Suppose that the phase-field potential is given by $W=W_{\mathrm{dw}}+I_{[-1,1]}$. Assume further that $W_{\mathrm{dw}}\in C^{3}(-1,1)$. Let $\mathcal{K}$ be a regularity weight satisfying~\eqref{regularity weight}. A quadruplet $(v,S,\varphi,\mu)$ is called a dissipative solution of type $\mathcal{K}$ of the two-phase system~\eqref{sys:two phase} with a indicator potential $I_{[-1,1]}$ if the following properties are satisfied:
    \begin{enumerate}
    \item The quadruplet $(v,S,\varphi,\mu)$ has the regularities~\eqref{defi: ds regularity}.
    \item With the relative energy $\mathcal{R}$ from~\eqref{defi: relative total energy}, the system operator $\mathcal{A}_{\gamma}$ from~\eqref{defi: system operator}, the relative dissipation $\mathcal{W}_{\gamma}^{(\mathcal{K})}$ from~\eqref{defi: relative dissipation}, and the space $\mathbb{Y}$ from \eqref{def-VY}, the quadruplet $(v,S,\varphi,\mu)$ satisfies the relative energy-dissipation estimate~\eqref{defi: ds estimate}
     for all $\tilde{v}\in C_{0,\mathrm{div}}^{\infty}(\Omega\times[0,T))$, $\tilde{S}\in C_{0,\mathrm{sym,Tr}}^{\infty}(\Omega\times[0,T))$ and  $\tilde{\varphi}\in C_{0}^{\infty}(\Omega\times[0,T))$ with $\abs{\tilde{\varphi}}\leq1$ and for a.e.\ $t\in(0,T)$. 
    \end{enumerate} 
\end{defi}

We observe that the main difference between Definition~\ref{defi:ds singular} and Definition~\ref{defi:ds double well} is that in Definition~\ref{defi:ds double well}, the class of admissible test functions is less restricted. In particular, we allow $\tilde{\varphi}$ to take value $\pm1$. 

Similarly to Proposition~\ref{relative energy-dissipation inequality for EVS}, we also have a relative energy-dissipation estimate for energy-variational solutions in the case of the double-obstacle potential.
\begin{prop}\label{relative energy-dissipation inequality for double well}
    Let $\gamma\geq0$. Suppose that the phase-field potential is given by $W=W_{\mathrm{dw}}+I_{[-1,1]}$. Moreover, we further assume that $W_{\mathrm{dw}}\in C^{3}(\mathbb{R})$. For an energy-variational solution $(v,S,\varphi,\mu,E)$, estimate~\eqref{relative energy-dissipation estimate} holds true for all $s<t\in(0,T)$ including $s=0$ with $(v(0),S(0),\varphi(0))=(v_{0},S_{0},\varphi_{0})$, for all $\tilde{v}\in C_{0,\mathrm{div}}^{\infty}(\Omega\times[0,T))$, $\tilde{S}\in C_{0,\mathrm{sym,Tr}}^{\infty}(\Omega\times[0,T))$ and  $\tilde{\varphi}\in C_{0}^{\infty}(\Omega\times[0,T))$ with $\abs{\tilde{\varphi}}\leq1$.

    By applying a version of Gronwall's inequality and estimating $E\geq\mathcal{E}(v,S,\varphi)$ to~\eqref{relative energy-dissipation estimate}, one can see that every energy-variational solution is a dissipative solution in the sense of Definition~\ref{defi:ds singular} in the case that $W=W_{\mathrm{dw}}+I_{[-1,1]}$.
\end{prop}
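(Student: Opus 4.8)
The plan is to mirror the proof of Proposition~\ref{relative energy-dissipation inequality for EVS}, i.e.\ the purely algebraic manipulation carried out in \cite[Proposition~3.6]{ALREVS} and \cite[Proposition~3.6]{arxiv:2509.25508}, which transforms the defining inequality~\eqref{EVS inequality} of an energy-variational solution into the relative form~\eqref{relative energy-dissipation estimate}. First I would use in~\eqref{EVS inequality} — after a routine density argument extending~\eqref{EVS inequality} to smooth, bounded but not necessarily compactly supported chemical-potential test functions — the choice $\tilde\mu:=-\Delta\tilde\varphi+W_{\mathrm{dw}}'(\tilde\varphi)$ associated with $\tilde\varphi$. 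This is the admissible choice in the double-obstacle setting precisely because $0\in\partial I_{[-1,1]}(\tilde\varphi)$ for \emph{every} $\tilde\varphi$ with $\abs{\tilde\varphi}\leq1$ (see~\eqref{def-subdiff-Wsing}), so that it is consistent with the system operator $\mathcal{A}_\gamma$ of~\eqref{defi: system operator} in its $W=W_{\mathrm{dw}}+I_{[-1,1]}$ reading $\tilde\mu=-\Delta\tilde\varphi+W_{\mathrm{dw}}'(\tilde\varphi)$. I would then add and subtract the first- and second-order Taylor contributions of the total energy around $(\tilde v,\tilde S,\tilde\varphi)$, perform the standard integrations by parts in time and space, and reorganize the resulting terms into the relative energy $\mathcal{R}$ from~\eqref{defi: relative total energy}, the relative dissipation $\mathcal{W}_\gamma^{(\mathcal{K})}$ from~\eqref{defi: relative dissipation}, and the duality pairing with $\mathcal{A}_\gamma(\tilde v,\tilde S,\tilde\varphi)$ appearing in~\eqref{relative energy-dissipation estimate}. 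Since this reorganization is algebraic and only uses the smoothness of the test functions together with the regularity~\eqref{EVS-reg}, it reproduces verbatim the computation of the logarithmic case once the two structural points below are in place.

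The first point is that, for the double-obstacle potential, the singular part is trivial on the relevant range: an energy-variational solution satisfies $\abs{\varphi}\leq1$ a.e.\ in $\Omega\times[0,T)$ by~\eqref{EVS-reg}, and the admissible test functions satisfy $\abs{\tilde\varphi}\leq1$, hence $I_{[-1,1]}(\varphi)=I_{[-1,1]}(\tilde\varphi)=0$. Consequently the relative phase-field energy~\eqref{defi: relative phase-field energy} collapses to the expression involving only $W_{\mathrm{dw}}$ (plus the $\kappa$-regularization), which is exactly the quantity already treated in Proposition~\ref{relative energy-dissipation inequality for EVS}, and the ``second derivative'' $W''(\tilde\varphi)$ entering~\eqref{relative energy-dissipation estimate} and~\eqref{defi: relative dissipation}, together with the third-order derivatives of $W$ hidden in $\mathcal{A}_\gamma$ through $\divge{m(\tilde\varphi)\nabla\tilde\mu}$, are all to be read with $W$ replaced by $W_{\mathrm{dw}}$ and are well defined and continuous thanks to $W_{\mathrm{dw}}\in C^{3}(\mathbb{R})$, even at $\tilde\varphi=\pm1$. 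The second point concerns the term $\varepsilon^{-1}\beta$ in~\eqref{EVS inequality} with $\beta(x,t)\in\partial I_{[-1,1]}(\varphi(x,t))$: it is absorbed via the subdifferential inequality, since for every admissible $\tilde\varphi$ one has $\int_\Omega\beta\,(\tilde\varphi-\varphi)\dd{x}\leq\int_\Omega I_{[-1,1]}(\tilde\varphi)-I_{[-1,1]}(\varphi)\dd{x}=0$, so that after reorganization $\beta$ enters with the correct (nonpositive) sign and no regularity of $\beta$ beyond its occurrence in~\eqref{EVS inequality} is required.

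Once~\eqref{relative energy-dissipation estimate} is established, the remaining assertion — that an energy-variational solution is also a dissipative solution, here in the sense of Definition~\ref{defi:ds double well} — follows exactly as in the passage from~\eqref{intro:rede} to~\eqref{intro:rede with gronwall}: one applies a version of Gronwall's inequality to absorb the right-hand side $\int_s^t\mathcal{K}(\tilde v,\tilde S,\tilde\varphi)\,\widetilde{\mathcal{R}}(v,S,\varphi,E|\tilde v,\tilde S,\tilde\varphi)\dd{\tau}$, fixes $s=0$ using $E(0_-)=\mathcal{E}(v_0,S_0,\varphi_0)$, and then uses $\widetilde{\mathcal{R}}=E-\mathcal{E}(v,S,\varphi)+\mathcal{R}\geq\mathcal{R}$, which holds because $E\geq\mathcal{E}(v,S,\varphi)$ by~\eqref{EVS-reg}, to discard the nonnegative energy defect. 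This yields estimate~\eqref{defi: ds estimate} for all test functions $\tilde v,\tilde S,\tilde\varphi$ with $\abs{\tilde\varphi}\leq1$, which is exactly Definition~\ref{defi:ds double well}.

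I expect the main difficulty to be bookkeeping rather than conceptual. One must verify that every term produced in the identification with $\mathcal{R}$, $\mathcal{W}_\gamma^{(\mathcal{K})}$ and $\langle\mathcal{A}_\gamma(\tilde v,\tilde S,\tilde\varphi),\cdot\rangle$ is finite under~\eqref{EVS-reg} — in particular the terms with $-\Delta(\varphi-\tilde\varphi)$, which require $\varphi\in L^2_{\mathrm{loc}}((0,T);H^2(\Omega))$, and the cubic Korteweg-type term $\varepsilon(\nabla\varphi-\nabla\tilde\varphi)\otimes(\nabla\varphi-\nabla\tilde\varphi):\nabla\tilde v$ — and, crucially, that the delicate cancellations producing~\eqref{relative energy-dissipation estimate} in the logarithmic case are not disturbed when $\tilde\varphi$ is allowed to touch $\pm1$ and the multivalued term $\partial I_{[-1,1]}$ is involved. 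The two structural points above are exactly what removes this difficulty: the indicator contributions to the relative energy vanish identically, and all surviving $\tilde\varphi$-derivatives come from $W_{\mathrm{dw}}\in C^{3}(\mathbb{R})$, so that, in contrast to the logarithmic case, no bound on derivatives of a genuinely singular potential and no strict constraint $\abs{\tilde\varphi}<1$ on the test function are needed.
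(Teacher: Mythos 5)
Your proposal is correct and takes essentially the same approach as the paper, which does not give a self-contained proof of this proposition but refers back to Proposition~\ref{relative energy-dissipation inequality for EVS} (``Similarly to...'') and, through it, to \cite[Proposition~3.6]{ALREVS} and \cite[Proposition~3.6]{arxiv:2509.25508}. The two structural observations you isolate --- that the indicator contributions vanish identically on both the solution $\varphi$ (with $\abs{\varphi}\le 1$ a.e.) and the admissible test function $\tilde\varphi$ (with $\abs{\tilde\varphi}\le 1$), so the relative phase-field energy reduces to the $W_{\mathrm{dw}}$-part, and that the selection $\beta\in\partial I_{[-1,1]}(\varphi)$ enters with the correct sign via $\int_\Omega\beta(\tilde\varphi-\varphi)\,\dd{x}\le I_{[-1,1]}(\tilde\varphi)-I_{[-1,1]}(\varphi)=0$, together with the choice $\tilde\mu=-\Delta\tilde\varphi+W_{\mathrm{dw}}'(\tilde\varphi)$ made possible by $0\in\partial I_{[-1,1]}(\tilde\varphi)$ --- are precisely the modifications the paper signals in the paragraph introducing the double-obstacle system operator before Definition~\ref{defi:ds double well}. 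One small remark: the concluding sentence of the statement points to Definition~\ref{defi:ds singular}, but, as you correctly use, the intended reference in the double-obstacle case is Definition~\ref{defi:ds double well}.
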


\section{Existence of energy-variational 
solutions for system \eqref{sys:two phase} with logarithmic  potential}\label{Sec:singular}
In~\cite{arxiv:2509.25508}, the Cahn–Hilliard type equation \eqref{sys: first CHE}  
in system \eqref{sys:two phase} was considered with constant mobility. Now, we generalize the analysis to the case that the mobility is a function satisfying Assumption~\ref{ASM:mobility}. Throughout Section~\ref{Sec:singular}, we assume the phase-field potential $W$ to be the sum of a double-well potential $W_{\mathrm{dw}}$ and a  logarithmic potential $W_{\mathrm{sg}}$, i.e.,
\begin{equation*}
    W(\varphi)=W_{\mathrm{dw}}(\varphi)+W_{\mathrm{sg}}(\varphi).
\end{equation*}
\subsection{Time discretization}\label{time discretization}

In this section, we will use an implicit time discretization to show the existence of weak solutions.

To start with, we first define another dissipation potential $\widetilde{\mathcal{P}}$ as
\begin{align}
    \begin{split}
    \widetilde{\mathcal{P}}:L^{2}(\Omega)\times H_{\mathrm{sym,Tr}}^{1}(\Omega)&\to[0,+\infty]
    \\
    \widetilde{\mathcal{P}}(\varphi;S)&:=\begin{cases}
        \mathcal{P}(\varphi;S)&(\varphi,S)\in L^{2}(\Omega)\times H_{\mathrm{sym,Tr}}^{1}(\Omega)\cap\dom{\mathcal{P}},\\
        +\infty&\mbox{otherwise}.
    \end{cases}
    \end{split}
    \label{restriction of dissipation potential}
\end{align} 
$\widetilde{\mathcal{P}}$ can be viewed as the restriction of $\mathcal{P}$ from \eqref{integral form of dissipation potential} to the space $L^{2}(\Omega)\times H_{\mathrm{sym,Tr}}^{1}(\Omega)$. Notice that $\widetilde{\mathcal{P}}$ is proper with $\widetilde{\mathcal{P}}(\varphi;0)=0$ for all $\varphi \in L^2(\Omega)$. Moreover, 
     for all $\varphi \in L^2(\Omega)$, the mapping $S \mapsto \widetilde{\mathcal{P}}(\varphi; S) $ is convex and lower semicontinuous in $H_{\mathrm{sym,Tr}}^{1}(\Omega)$. 
We write $\dom{\partial\widetilde{\mathcal{P}}(\varphi;\cdot)}$ to represent the domain of the convex partial subdifferential.

For the time discretization, let $h=\frac{T}{N}$ for $N \in \mathbb{N}$ and let $t_{0}=0$, $t_{k}=kh$, $t_{N}=T$. Let the initial data $v_{0}, S_{0}, \varphi_{0}$ satisfy Assumption \ref{ASM:initial data}. For all $k=0,1,\ldots,N-1$, let $v_{k}\in L_{\mathrm{div}}^{2}(\Omega)$, $S_{k}\in L_{\mathrm{sym,Tr}}^{2}(\Omega)$, $\varphi_{k}\in H^1(\Omega)$ with $W^{\prime}(\varphi_{k})\in L^2(\Omega)$ and $\rho_{k}=\frac{1}{2}(\rho_{1}+\rho_{2})+\frac{1}{2}(\rho_{2}-\rho_{1})\varphi_{k}$. Moreover, let
\begin{equation*}
    f_{k+1}=\frac{1}{h}\int_{t_{k}}^{t_{k+1}}f(\tau)\dd{\tau}
\end{equation*}
and set 
\begin{subequations}\label{time discrete problem}
\begin{align*}
	J_{k+1}:=-\frac{\rho_{2}-\rho_{1}}{2}\nabla\mu_{k+1}\,.
\end{align*}
For all $k\in\{0,1,\ldots,N-1\},$ with given $(v_k,S_k,\varphi_k,\mu_k),$ we determine $(v_{k+1},S_{k+1},\varphi_{k+1},\mu_{k+1})$  
such that 
\begin{align*}
	\begin{split}
	v_{k+1}\in H_{0,\mathrm{div}}^{1}(\Omega),S_{k+1}\in H_{\mathrm{sym,Tr}}^{1}(\Omega)\cap\dom{\partial\widetilde{\mathcal{P}}(\varphi_{k};\cdot)},
	\varphi_{k+1}\in \dom{\mathrm{D} \mathcal{E}_{\mathrm{pf},\kappa}},\mu_{k+1}\in H_{\vec{n}}^{2}(\Omega),
	\end{split}
\end{align*}
where $H_{\vec{n}}^{2}(\Omega):=\{u\in H^{2}(\Omega):\vec{n}\cdot\nabla u|_{\partial\Omega}=0\}$, and such that $(v_{k+1},S_{k+1},\varphi_{k+1},\mu_{k+1})$  satisfies:\\
1. The weak formulation of the discrete momentum balance:
\begin{align}
	\langle \frac{\rho_{k+1} v_{k+1}-\rho_{k}v_{k}}{h}+ \divge{\rho_{k}v_{k+1}\otimes v_{k+1}},\Phi \rangle_{L^{2}}
    &	\nonumber\\
	+\langle 2\nu(\varphi_{k})\sym{\nabla v_{k+1}},\sym{\nabla\Phi} \rangle_{L^{2}} 
	-\langle \divge{\eta(\varphi_{k})S_{k+1}},\Phi \rangle_{L^{2}}&
	\nonumber\\
	+\langle (\divge{J_{k+1}}-\frac{\rho_{k+1}-\rho_{k}}{h}-v_{k+1}\cdot\nabla\rho_{k})\frac{v_{k+1}}{2},\Phi \rangle_{L^{2}}&
	\nonumber\\
    +\langle J_{k+1}\cdot\nabla v_{k+1}
    -\mu_{k+1}\nabla\varphi_{k},\Phi \rangle_{L^{2}}&=\langle f_{k+1},\Phi\rangle_{H^{1}}.
	\label{discrete velocity}
\end{align}
for all $\Phi\in C_{0,\mathrm{div}}^{\infty}(\Omega)$.\\
2. The weak formulation of the discrete evolution law for the stress tensor:
\begin{align}
	\langle \frac{S_{k+1}-S_{k}}{h} 
    + (v_{k+1}\cdot\nabla S_{k+1}),\Psi \rangle_{L^{2}}&
	\nonumber\\
	+\langle (S_{k+1}\skw{\nabla v_{k+1}}-\skw{\nabla v_{k+1}}S_{k+1}),\Psi \rangle_{L^{2}}& 
	\nonumber\\
	+\langle \xi_{k+1}^{k},\Psi\rangle_{H_{\mathrm{sym,Tr}}^{1}}
    +\langle \gamma\nabla S_{k+1},\nabla\Psi \rangle_{L^{2}}
    &=\langle \eta(\varphi_{k})\sym{\nabla v_{k+1}},\Psi\rangle_{L^{2}} 
	\label{discrete stress}
\end{align}
for all $\Psi\in C_{\mathrm{sym,Tr}}^{\infty}(\bar{\Omega})$, and here $\xi_{k+1}^{k}\in\partial\widetilde{\mathcal{P}}(\varphi_{k};S_{k+1})\subseteq (H_{\mathrm{sym,Tr}}^{1}(\Omega))^{\prime}$.\\
3. The discrete evolution law for the phase-field variable:
\begin{align}
	\frac{\varphi_{k+1}-\varphi_{k}}{h}+v_{k+1}\cdot\nabla\varphi_{k}&=\divge{m(\varphi_{k})\nabla\mu_{k+1}}, 
	\label{discrete first CH}
\end{align}
as well as
\begin{align}
	\mu_{k+1}+\kappa\frac{\varphi_{k+1}+\varphi_{k}}{2}&=-\Delta\varphi_{k+1}+W_{\kappa}^{\prime}(\varphi_{k+1}), 
	\label{discrete second CH}
\end{align}
almost everywhere in $\Omega$.
\end{subequations}

\begin{lem}[Existence of solutions to the time discrete problem]\label{Existence of solution to the time discrete problem}
	For $k\in\{0,1,\ldots,N-1\}$, let $v_{k}\in L_{\mathrm{div}}^{2}(\Omega)$, $S_{k}\in L_{\mathrm{sym,Tr}}^{2}(\Omega)$, $\varphi_{k}\in H^{2}(\Omega)$ with $\abs{\varphi_{k}}\leq1$ and $\rho_{k}=\frac{\rho_{2}-\rho_{1}}{2}\varphi_{k}+\frac{\rho_{2}+\rho_{1}}{2}$ be given. Let $\widetilde{\mathcal{P}}$ be as in \eqref{restriction of dissipation potential}  and set
    \begin{equation*}
        X:=H_{0,\mathrm{div}}^{1}(\Omega)\times H_{\mathrm{sym,Tr}}^{1}(\Omega)\cap\dom{\partial\widetilde{\mathcal{P}}(\varphi_{k};\cdot)}\times\dom{\mathrm{D}\mathcal{E}_{\mathrm{pf},\kappa}}\times H_{\vec{n}}^{2}(\Omega).
    \end{equation*} 
    Then there exists a quadruplet $(v_{k+1},S_{k+1},\varphi_{k+1},\mu_{k+1})\in X$ solving \eqref{discrete velocity}-\eqref{discrete second CH}. 
    Moreover, this solution satisfies the discrete energy-dissipation estimate
    \begin{equation}\label{discrete total energy estimate}
        \begin{aligned}
            &\mathcal{E}(v_{k+1},S_{k+1},\varphi_{k+1})
            \\
            &+h\int_{\Omega}2\nu(\varphi_{k})
            \abs{\sym{\nabla v_{k+1}}}^{2}
            +\gamma\abs{\nabla S_{k+1}}^{2}
            +m(\varphi_{k})\abs{\nabla\mu_{k+1}}^{2}
            \dd{x}
            +h\langle \xi_{k+1}^{k},S_{k+1} \rangle_{H_{\mathrm{sym,Tr}}^{1}}
            \\
            \leq&\mathcal{E}(v_{k},S_{k},\varphi_{k})
        +h\langle f_{k+1}, v_{k+1}\rangle_{H^{1}}.
        \end{aligned}
    \end{equation}
\end{lem}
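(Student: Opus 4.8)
The statement has two parts: (i) existence of a solution $(v_{k+1},S_{k+1},\varphi_{k+1},\mu_{k+1})\in X$ to the coupled nonlinear system \eqref{discrete velocity}--\eqref{discrete second CH}, and (ii) the discrete energy-dissipation estimate \eqref{discrete total energy estimate}. I would first dispatch (ii) as a formal a priori calculation (which also guides the existence argument), and then return to a rigorous proof of (i) via a fixed-point or Galerkin scheme combined with the theory of maximal monotone operators for the nonsmooth stress term.

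\textbf{Step 1: The formal energy estimate.}
I would test \eqref{discrete velocity} with $\Phi=v_{k+1}$, \eqref{discrete stress} with $\Psi=S_{k+1}$, \eqref{discrete first CH} with $\mu_{k+1}$, and \eqref{discrete second CH} with $(\varphi_{k+1}-\varphi_k)/h$, then add the four identities. The key algebraic points are: the skew-symmetric Zaremba--Jaumann term $(S_{k+1}\skw{\nabla v_{k+1}}-\skw{\nabla v_{k+1}}S_{k+1}):S_{k+1}=0$ pointwise by antisymmetry; the transport/convective terms combine with the ``half'' term $(\operatorname{div}J_{k+1}-(\rho_{k+1}-\rho_k)/h-v_{k+1}\cdot\nabla\rho_k)\tfrac{v_{k+1}}{2}$ and the discrete continuity relation to telescope into $\tfrac1{2h}\langle\rho_{k+1}|v_{k+1}|^2-\rho_k|v_k|^2\rangle$ (up to the nonnegative ``numerical dissipation'' $\tfrac1{2h}\langle\rho_k|v_{k+1}-v_k|^2\rangle$ which is discarded); the convexity inequality $W_\kappa'(\varphi_{k+1})(\varphi_{k+1}-\varphi_k)\ge W_\kappa(\varphi_{k+1})-W_\kappa(\varphi_k)$ plus the analogous one for the Dirichlet energy $\nabla\varphi_{k+1}\cdot\nabla(\varphi_{k+1}-\varphi_k)\ge\tfrac12|\nabla\varphi_{k+1}|^2-\tfrac12|\nabla\varphi_k|^2$; the pairing $\langle\mu_{k+1}\nabla\varphi_k,v_{k+1}\rangle$ from the momentum equation cancels the corresponding convective term $\langle v_{k+1}\cdot\nabla\varphi_k,\mu_{k+1}\rangle$ from the phase-field equation; the elastic coupling $-\langle\operatorname{div}(\eta(\varphi_k)S_{k+1}),v_{k+1}\rangle=\langle\eta(\varphi_k)S_{k+1},\sym{\nabla v_{k+1}}\rangle$ cancels $\langle\eta(\varphi_k)\sym{\nabla v_{k+1}},S_{k+1}\rangle$ from the stress equation. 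What survives is precisely \eqref{discrete total energy estimate}, with $\langle\xi_{k+1}^k,S_{k+1}\rangle_{H^1_{\mathrm{sym,Tr}}}\ge0$ retained since $\xi_{k+1}^k\in\partial\widetilde{\mathcal P}(\varphi_k;S_{k+1})$ and $\widetilde{\mathcal P}(\varphi_k;0)=0$ give $\langle\xi_{k+1}^k,S_{k+1}\rangle\ge\widetilde{\mathcal P}(\varphi_k;S_{k+1})-\widetilde{\mathcal P}(\varphi_k;0)\ge0$.

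\textbf{Step 2: Existence via regularization, Galerkin, and monotone-operator theory.}
For (i) I would decouple the system in a fixed-point loop: given a guess $\bar v\in H^1_{0,\mathrm{div}}$, solve the Cahn--Hilliard pair \eqref{discrete first CH}--\eqref{discrete second CH} for $(\varphi_{k+1},\mu_{k+1})$ — this is an elliptic problem for $\varphi_{k+1}$ with the maximal monotone nonlinearity $W'_\mathrm{sg}$ plus the smooth $W'_\mathrm{dw}$, solvable since $\mathcal E_{\mathrm{pf},\kappa}$ is convex and coercive (here one uses $|\varphi_k|\le1$ so that $W_\mathrm{sg}(\varphi_k)$ is finite and the logarithmic term is handled by standard results, e.g.\ via Yosida approximation of $\partial W_\mathrm{sg}$); then solve the stress equation \eqref{discrete stress} for $S_{k+1}$, which for fixed $v_{k+1}=\bar v$ is of the form $(\text{Id}/h-\gamma\Delta+\text{skew coupling}+\partial\widetilde{\mathcal P}(\varphi_k;\cdot))S_{k+1}\ni\text{data}$; since the skew term $S\mapsto S\skw{\nabla\bar v}-\skw{\nabla\bar v}S$ is linear, bounded, and $L^2$-skew-adjoint (hence monotone perturbation), the whole operator is maximal monotone and coercive on $H^1_{\mathrm{sym,Tr}}$, so it is surjective by Minty--Browder; finally solve the momentum equation \eqref{discrete velocity} for $v_{k+1}$ given $(S_{k+1},\varphi_{k+1},\mu_{k+1})$, a linear (after symmetrizing the $v_{k+1}\otimes v_{k+1}$ term into a skew convective form that vanishes on the diagonal) elliptic problem, coercive by $\nu_1>0$. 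The composition of these solution maps is then shown to have a fixed point by Schauder/Leray--Schauder, using the a priori bound from Step 1 to confine the iteration to a ball and compact Sobolev embeddings ($H^1\hookrightarrow\hookrightarrow L^2$ in three dimensions) for continuity/compactness of the map. I would also need a short argument that the selection $\xi_{k+1}^k$ can be made consistently — this follows because once $S_{k+1}$ is fixed by the equation, $\partial\widetilde{\mathcal P}(\varphi_k;S_{k+1})$ is a nonempty closed convex set and any element works for the estimate.

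\textbf{Main obstacle.}
The delicate point is the fixed-point construction around the \emph{fully coupled} nonlinear terms, in particular keeping the quadratic convective term $\operatorname{div}(\rho_k v_{k+1}\otimes v_{k+1})$ and the Zaremba--Jaumann product $S_{k+1}\skw{\nabla v_{k+1}}$ under control simultaneously: one cannot treat both fully implicitly inside a single monotone operator, so the architecture must be a genuine fixed-point in $v$ (freezing $v_{k+1}$ in the convective and spin terms), combined with Galerkin truncation in the momentum equation to make the $v\otimes v$ nonlinearity well-defined before passing to the limit. Establishing the a priori bound uniformly along this inner iteration — so that the formal computation of Step 1 can be made rigorous on the Galerkin level and survives the limit — is where the real work lies; the rest is by now standard and one can cite the corresponding construction in \cite{arxiv:2509.25508,zbMATH07488956} for the structurally identical single-phase/constant-mobility case, indicating only the modifications needed for the variable mobility $m(\varphi_k)$ (which, being bounded above and below by Assumption~\ref{ASM:mobility}, changes nothing essential) and for the logarithmic term with the shifted argument.
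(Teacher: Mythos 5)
Your Step 1 (the a priori estimate) is exactly the paper's argument: it cites a formal test of \eqref{discrete velocity}--\eqref{discrete second CH} with $(v_{k+1},S_{k+1},\varphi_{k+1},(\varphi_{k+1}-\varphi_k)/h)$ and relies on the same cancellations you list (the spin term vanishes by antisymmetry, the Temam-type ``half'' term telescopes the kinetic energy, the cross terms $\langle\mu_{k+1}\nabla\varphi_k,v_{k+1}\rangle$ and $\langle\eta(\varphi_k)S_{k+1},\sym{\nabla v_{k+1}}\rangle$ cancel, and convexity of $W_\kappa$ handles the potential). One small refinement worth noting: because the test function in \eqref{discrete second CH} is $(\varphi_{k+1}-\varphi_k)/h$ rather than the continuous time derivative, the paper exploits that the $\kappa\tfrac{\varphi_{k+1}+\varphi_k}{2}$ term produces an exact telescope $\tfrac{\kappa}{2h}(|\varphi_{k+1}|^2-|\varphi_k|^2)$ and the convexity inequality is only needed for $W_\kappa$ itself.

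Your Step 2 diverges from the paper's architecture. The paper does not decouple the four subequations and compose sub-solvers; instead it reformulates the full time-discrete system monolithically as $\mathscr{L}_k(w)-\mathscr{F}_k(w)=0$ for $w=(v_{k+1},S_{k+1},\varphi_{k+1},\mu_{k+1})$ in a suitable product dual space $Y$, where $\mathscr{L}_k$ collects the coercive, invertible leading parts of each equation (for the new variable-mobility setting, its third entry is $-\divge{m(\varphi_k)\nabla\mu}+\int_\Omega\mu\dd{x}$) and $\mathscr{F}_k$ collects the lower-order nonlinear and coupling terms. Schaefer's fixed-point theorem is then applied directly to $\mathscr{F}_k\circ\mathscr{L}_k^{-1}$, with the energy estimate supplying the required boundedness. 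Your sequential Picard-type scheme (Cahn--Hilliard $\to$ stress via Minty--Browder $\to$ momentum via Galerkin/Lax--Milgram, then Schauder on the composition) is a legitimate alternative, but it introduces an extra layer of bookkeeping: you must ensure that the a priori bound holds uniformly along the \emph{intermediate} iterates, not just at the solution, and you must argue compactness/continuity of the composed map through three nested elliptic solvers. The monolithic split avoids this by putting all the coercivity into a single invertible $\mathscr{L}_k$ and treating the remainder as one compact perturbation. Both buy the same conclusion; the paper's route is shorter because it can cite the parallel construction in the companion paper essentially verbatim, whereas yours requires spelling out each sub-solver and a separate Galerkin truncation of the $v\otimes v$ term.
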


\begin{proof}
    This lemma can be proven by following~\cite[Lemma 4.3]{arxiv:2509.25508}: By formally testing \eqref{time discrete problem} with the solution $(v_{k+1},S_{k+1,},\varphi_{k+1},\mu_{k+1})$ one obtains the discrete energy-dissipation estimate \eqref{discrete total energy estimate}. Indeed, the existence of the solution $(v_{k+1},S_{k+1,},\varphi_{k+1},\mu_{k+1})$ can be obtained with the aid of Schaefer's fixed point theorem by reformulating the time-discrete problem \eqref{time discrete problem} in terms of two operators $\mathscr{L}_{k},\mathscr{F}_{k}:X\to Y,$ where
    the space $Y$ is given as 
	\begin{equation*}
	    Y:=\left(H_{0,\mathrm{div}}^{1}(\Omega)\right)^{\prime}
        \times\left(H_{\mathrm{sym,Tr}}^{1}(\Omega)\right)^{\prime}
        \times L^{2}(\Omega)
        \times L^{2}(\Omega).
	\end{equation*}
    One can see that $w=(v_{k+1},S_{k+1,},\varphi_{k+1},\mu_{k+1})$  is a weak solution of \eqref{time discrete problem}, if and only if 
    \begin{equation*}
    \mathscr{L}_{k}(w)-\mathscr{F}_{k}(w)=0\,. 
    \end{equation*}
    Hereby, the two operators can be defined as in the proof of \cite[Lemma 4.3]{arxiv:2509.25508} with the only difference that the third entry of operator $\mathscr{L}_{k}$ now is given as $-\divge{m(\varphi_{k})\nabla\mu}+\int_{\Omega}\mu\dd{x}$. 
    To apply Schaefer's fixed point theorem it has to be shown that $\mathscr{L}_{k}$ is invertible with a continuous inverse and that $\mathscr{F}_{k}$ is continuous and bounded. These properties can be verified by following the lines of the proof of \cite[Lemma 4.3]{arxiv:2509.25508} and the fixed point theorem can then be applied to the operator $\mathscr{F}_{k}\circ\mathscr{L}_{k}^{-1}$ as in the proof of \cite[Lemma 4.3]{arxiv:2509.25508}.  
\end{proof}

\subsection{Existence of energy-variational solutions for system \eqref{sys:two phase} with $\gamma>0$}

In this subsection, we show the existence of energy-variational solutions for the regularized system~\eqref{sys:two phase} with $\gamma>0$ and a logarithmic phase-field potential, i.e., such that $W=W_{\mathrm{dw}}+W_{\mathrm{sg}}$. Most of the steps can be carried out analogously to the proof of~\cite[Theorem 4.4]{arxiv:2509.25508}. Therefore, here it is sufficient to construct the auxiliary energy energy variable $E:[0,T]\to\mathbb{R}$ as an upper bound to the system energy along solutions and verify its convergence. 
\begin{theorem}\label{exitence evs regulairzed system}
    Let $\gamma>0$. Let Assumptions~\ref{ASM:domain}-\ref{ASM:singular potential} be satisfied. Then, there exists an energy-variational solution $(v,S,\varphi,\mu,E)$ of type $\mathcal{K}\equiv0$ to the system~\eqref{sys:two phase} with $\gamma>0$ in the sense of Definition~\ref{defi: EVS} with $E(0)=\mathcal{E}(v_{0},S_{0},\varphi_{0})$. Moreover, the phase-field variable $\varphi$ takes values in $(-1,1)$ a.e. in $\Omega\times(0,T)$.
\end{theorem}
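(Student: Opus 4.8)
The plan is to construct the energy-variational solution as the limit of the time-discrete scheme from Section~\ref{time discretization}, building on the analogous existence proof in~\cite[Theorem 4.4]{arxiv:2509.25508}, and to focus the argument on the new ingredient, namely the construction and convergence of the auxiliary energy variable $E$. First I would set up the piecewise-constant and piecewise-affine interpolants $\bar v_h,\bar S_h,\bar\varphi_h,\bar\mu_h$ (and the corresponding affine interpolants) of the discrete solutions $(v_{k+1},S_{k+1},\varphi_{k+1},\mu_{k+1})$ produced by Lemma~\ref{Existence of solution to the time discrete problem}. Summing the discrete energy-dissipation estimate~\eqref{discrete total energy estimate} over $k$, and using $\langle\xi_{k+1}^{k},S_{k+1}\rangle_{H^1_{\mathrm{sym,Tr}}}\geq\mathcal{P}(\varphi_k;S_{k+1})\geq0$ together with the coercivity hypotheses in Assumptions~\ref{ASM:material paramaters}--\ref{ASM:mobility} and a Gronwall/Young argument to absorb $\langle f_{k+1},v_{k+1}\rangle_{H^1}$, yields the uniform a priori bounds corresponding exactly to the regularity class~\eqref{EVS-reg} (including $S_h\in L^2_tH^1_x$ since $\gamma>0$, and $|\bar\varphi_h|\leq1$ via the logarithmic potential). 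With $|\tilde\varphi|<1$ not an issue here since we take $\mathcal{K}\equiv0$, I would then extract weakly/weakly-$*$ convergent subsequences, use Aubin--Lions and the $\gamma>0$ parabolicity of the $S$-equation to get strong $L^2$-convergence of $S_h$ and of $\nabla\varphi_h$ (hence of the Korteweg term $\nabla\varphi\otimes\nabla\varphi$), and pass to the limit in the weak forms~\eqref{discrete velocity}--\eqref{discrete second CH}, identifying the subdifferential term $\xi$ via maximal monotonicity of $\partial\widetilde{\mathcal{P}}(\varphi;\cdot)$ and strong convergence of $S_h,\varphi_h$; all of this is as in~\cite{arxiv:2509.25508}.

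The genuinely new step is the auxiliary energy $E$. I would define the discrete energy function $\bar E_h(t):=\mathcal{E}(v_{k+1},S_{k+1},\varphi_{k+1})$ for $t\in(t_k,t_{k+1}]$ augmented by the consumed-force term, more precisely set
\begin{equation*}
\bar E_h(t):=\mathcal{E}(v_{k+1},S_{k+1},\varphi_{k+1})-\int_{t_{k+1}}^{t}\langle f,\bar v_h\rangle_{H^1}\dd\tau\quad\text{adjusted so that }t\mapsto \bar E_h(t)-\textstyle\int_0^t\langle f,\bar v_h\rangle\dd\tau\text{ is nonincreasing,}
\end{equation*}
using~\eqref{discrete total energy estimate} to see monotonicity of this shifted quantity. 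Since $\int_0^t\langle f,\bar v_h\rangle_{H^1}\dd\tau$ is uniformly bounded and equicontinuous (by the $L^2_tH^{-1}_x$-bound on $f$ and the uniform $L^2_tH^1_x$-bound on $v_h$), the shifted functions are uniformly bounded monotone functions of $t$; by Helly's selection principle I extract a pointwise-in-$t$ limit, and adding back the (strongly convergent) force integral gives $E\in BV_{\mathrm{loc}}([0,T])$ with $E(0)=\mathcal{E}(v_0,S_0,\varphi_0)$. Lower semicontinuity of $\mathcal{E}$ under the weak convergences (the phase-field and elastic parts by convexity/Fatou, the kinetic part using that $\rho(\varphi_h)\to\rho(\varphi)$ strongly and $v_h\rightharpoonup v$) gives $E(t)\geq\mathcal{E}(v(t),S(t),\varphi(t))$ for a.e.\ $t$.

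Finally I would assemble the variational inequality~\eqref{EVS inequality}: test the discrete equations with smooth test functions from $\mathfrak{T}$, integrate by parts in time using Lemma~\ref{Lem: equivalence weak form} to convert the discrete energy estimate into the integrated form on $[s,t]$, pass to the limit term by term, and use lower semicontinuity to handle the dissipative quadratic terms $\int\!\!\int 2\nu|\sym{\nabla v}|^2+\gamma|\nabla S|^2+m(\varphi)|\nabla\mu|^2$ as lower bounds — note that with $\mathcal{K}\equiv0$ the right-hand side defect term vanishes, which is precisely why no Gronwall argument is needed. The convexity of $\mathcal{P}(\varphi;\cdot)$ handles the term $\mathcal{P}(\varphi;S)-\mathcal{P}(\varphi;\tilde S)$ via weak lower semicontinuity of $S\mapsto\mathcal{P}(\varphi;S)$ combined with strong convergence of $\varphi_h$, and the plasticity inclusion $\beta\in\partial W_{\mathrm{sg}}(\varphi)$ follows from~\eqref{discrete second CH} and the fact that $W_{\mathrm{sg}}$ is single-valued and smooth on $(-1,1)$ where $\varphi$ lives. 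The main obstacle I anticipate is the simultaneous control of the bilinear transport/Jaumann terms $S\otimes v\threedotsbin\nabla\tilde S$ and $(S\skw{\nabla v}-\skw{\nabla v}S):\tilde S$ in the limit: the first requires strong convergence of $S_h$ (available for $\gamma>0$) against the weak convergence of $v_h$, while the second pairs the weak limit of $\nabla v_h$ with the strong limit of $S_h$ against the fixed smooth $\tilde S$ — this is exactly where the regularization $\gamma>0$ is essential, and the care in matching strong/weak modes of convergence is the technical heart of the passage to the limit.
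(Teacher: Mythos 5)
Your overall strategy mirrors the paper's proof: construct the solution as the limit of the time-discrete scheme from Section~\ref{time discretization}, and obtain the auxiliary energy $E$ by exhibiting the discrete energies as the sum of a monotone-nonincreasing function and a controlled increasing correction, then invoking Helly. (The paper absorbs $\langle f^N,v^N\rangle$ into the dissipation via Korn and Young to get the correction $C\int_0^t\|f^N\|^2_{H^{-1}}\dd\tau$, which is independent of the approximate solution; your choice of shifting by $\int_0^t\langle f,\bar v_h\rangle\dd\tau$ is a workable variant of the same idea.) The lower-semicontinuity argument for $E\geq\mathcal E(v,S,\varphi)$, the treatment of $\mathcal P(\varphi;S)-\mathcal P(\varphi;\tilde S)$ via convexity/weak lower semicontinuity, and the observation that $\mathcal K\equiv0$ removes the need for Gr\"onwall are all in line with the paper.

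There is, however, one genuine gap that would stall the limit passage: you rely on \emph{strong} $L^2$-convergence of $S^N$ to handle the Zaremba--Jaumann product $(S\skw{\nabla v}-\skw{\nabla v}S):\tilde S$. The paper never establishes (nor uses) strong convergence of $S^N$; the convergence list contains only $S^N\rightharpoonup S$ in $L^2(0,T';H^1)$ and $S^N\overset{*}{\rightharpoonup}S$ in $L^\infty(0,T';L^2)$. The obstruction is the plastic subdifferential: the only a priori control on $\xi^N\in\partial\widetilde{\mathcal P}(\varphi_h^N;S^N)$ is $h\langle\xi_{k+1}^k,S_{k+1}\rangle$ via the discrete energy estimate, which does not yield a uniform $(H^1)'$-bound on $\xi^N$ and hence no usable bound on $\partial_t S^N$ for Aubin--Lions. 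The paper's way around this (equation~\eqref{trick-S-skw}) is to integrate by parts in the spatial variable,
\begin{equation*}
\int_\Omega S^N_{ik}(\partial_{x_j}v^N_k)\tilde S_{ij}\dd x
=-\int_\Omega(\partial_{x_j}S^N_{ik})v^N_k\tilde S_{ij}\dd x
-\int_\Omega S^N_{ik}v^N_k(\partial_{x_j}\tilde S_{ij})\dd x\,,
\end{equation*}
so that each product pairs the \emph{available} strong $L^2_tL^2_x$-convergence of $v^N$ (from the momentum equation via Aubin--Lions) against the weak convergences of $S^N$ and $\nabla S^N$ (the latter only available because $\gamma>0$), and the integration by parts is then reversed in the limit. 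For the same reason, your proposal to identify the limit of $\xi^N$ by maximal monotonicity combined with strong convergence of $S_h$ does not go through; the paper sidesteps this entirely by testing the subdifferential inequality to replace $\langle\xi^N,S^N-\tilde S\rangle$ with $\mathcal P(\varphi_h^N;S^N)-\mathcal P(\varphi_h^N;\tilde S)$ before the limit passage, so only lower semicontinuity of $\mathcal P$ is needed. Finally, the claim that $E(0)=\mathcal E(v_0,S_0,\varphi_0)$ follows directly from the construction is too quick: the Helly limit is only pointwise a.e., so the paper devotes a separate step (testing the limiting inequality with vanishing test functions and scaling) to show $E(0_+)\leq\mathcal E(v_0,S_0,\varphi_0)$ and attain the initial data.
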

\begin{proof}
In the following we verify the properties of an energy-variational solution stated in Items 1 and 2 of Definition \ref{defi: EVS}.
\\[1ex]
\textbf{Step 1: Selection of convergent subsequences and proof of regularity result \eqref{EVS-reg}.}
\begin{subequations}
For a function $u:[0,T]\to X$ with a Banach space $X$ we define the piecewise constant interpolant and piecewise affine interpolant in time as:    
\begin{align}
    u^{N}(t)&:=u_{k+1}\text{ for }t\in(t_{k},t_{k+1}]\text{ and }u(0):=u_{0},
    \\
    \overline{u}^{N}(t)&:=\frac{t-t_{k}}{h}u_{k+1}+\frac{t_{k+1}-t}{h}u_{k}\text{ for }t\in[t_{k},t_{k+1}].
\end{align}
Moreover, we also define
\begin{align}
    \partial_{t,h}^{+}u^{N}(t)&:=\frac{1}{h}(u^{N}(t+h)-u^{N}(t)),
	\partial_{t,h}^{-}u^{N}(t):=\frac{1}{h}(u^{N}(t)-u^{N}(t-h)),
        \\
	u_{h}^{N}(t)&:=u^{N}(t-h).
\end{align}
\end{subequations}
By following~\cite[Theorem 4.4]{arxiv:2509.25508}, we first obtain the following convergence results along a not relabelled subsequence as $N\to\infty$:  
\begin{subequations}
\label{conv-sols}
    \begin{align}
		v^{N}&\rightharpoonup v\mbox{ in }L^{2}(0,T^{\prime};H^{1}(\Omega)),
		\label{weak convergence of piecewise constant interpolation velocity space derivative}\\
		v^{N}&\overset{*}{\rightharpoonup}v\mbox{ in }L^{\infty}(0,T^{\prime};L_{\mathrm{div}}^{2}(\Omega)),
		\label{weak convergence of piecewise constant interpolation velocity}\\
		S^{N}&\rightharpoonup S\mbox{ in }L^{2}(0,T^{\prime};H^{1}(\Omega)),
		\label{weak convergence of piecewise constant interpolation stress space derivative}\\
		S^{N}&\overset{*}{\rightharpoonup}S\mbox{ in }L^{\infty}(0,T^{\prime};L_{\mathrm{sym,Tr}}^{2}(\Omega)),
		\label{weak convergence of piecewise constant interpolation stress}\\
		\varphi^{N}&\overset{*}{\rightharpoonup}\varphi\mbox{ in }L^{\infty}(0,T^{\prime};H^{1}(\Omega)),
		\label{weak convergence of piecewise constant interpolation phase-field}\\
        \varphi^{N}&\rightharpoonup\varphi\mbox{ in }L^{2}(0,T^{\prime};H^{2}(\Omega)),
        \label{weak convergence of piecewise constant interpolation phase-field H2}\\
		\mu^{N}&\rightharpoonup\mu\mbox{ in }L^{2}(0,T^{\prime};H^{1}(\Omega)),
		\label{weak convergence of piecewise constant interpolation chemical potential}\\
        \varphi^{N}&\to\varphi\mbox{ in }L^{2}(0,T^{\prime};H^{1}(\Omega)),
		\label{strong convergence of piecewise constant interpolation phase-field}\\
        v^{N}&\to v\mbox{ in }L^{2}(0,T^{\prime};L^{2}(\Omega)).
		\label{strong convergence of piecewise constant interpolation velocity}
	\end{align}
\end{subequations}
for all $0<T^{\prime}<T$. Moreover, for the right-hand side 
$f\in L_{\mathrm{loc}}^{2}(0,T^{\prime};H^{-1}(\Omega)^{3})$ in \eqref{sys: momentum}, by the construction of the piecewise constant interpolant $f^{N}$, we have 
\begin{align}
    \lVert f^{N}\rVert_{L^{2}(0,T^{\prime};H^{-1}(\Omega))^{3}}&\leq\lVert f\rVert_{L^{2}(0,T^{\prime};H^{-1}(\Omega))^{3}},
    \\
    f^{N}&\to f\text{ in }L^{2}(0,T^{\prime};H^{-1}(\Omega)^{3}).\label{strong convergence of piecewise constnat out loeading}
\end{align}
In addition, we define the following interpolants for the system energy evaluated along the time-discrete solutions obtained in 
Lemma \ref{time discrete problem}  
    \begin{align}
        E^{N}(t)&:=\mathcal{E}(v^{N}(t),S^{N}(t),\varphi^{N}(t)),
        \\
        \overline{E}^{N}(t)&:=\frac{t-t_{k}}{h}\mathcal{E}(v_{k+1},S_{k+1},\varphi_{k+1})
        +\frac{t_{k+1}-t}{h}\mathcal{E}(v_{k},S_{k},\varphi_{k})
        \text{ for }t\in[t_{k},t_{k+1}].
    \end{align}
    One can see $E^{N}:[0,T]\to\mathbb{R}$ is a piecewise constant function. Furthermore, from~\eqref{discrete total energy estimate} we have 
    \begin{equation*}
        \begin{aligned}
            &E^{N}(t_{k+1})
            +\int_{t_{k}}^{t_{k+1}}\frac{2\nu_{1}}{k_{\Omega}}\lVert v_{k+1}\rVert_{H^{1}}^{2}\dd{\tau}
            \\
            &\leq E^{N}(t_{k+1})
            +\int_{t_{k}}^{t_{k+1}}\int_{\Omega}2\nu(\varphi_{k})\abs{\sym{\nabla v_{k+1}}}^{2}\dd{x}\dd{\tau}
            \\
            &\leq E^{N}(t_{k})
            +\int_{t_{k}}^{t_{k+1}}\langle f^{N},v_{k+1}\rangle_{H^{1}}\dd{\tau}
            \\
            &\leq E^{N}(t_{k})+\frac{k_{\Omega}}{\nu_{1}}\int_{t_{k}}^{t_{k+1}}\lVert f^{N}\rVert_{H^{-1}}^{2}\dd{\tau}+\int_{t_{k}}^{t_{k+1}}\frac{\nu_{1}}{k_{\Omega}}\lVert v_{k+1}\rVert_{H^{1}}^{2}\dd{\tau}\,.
        \end{aligned}
    \end{equation*}
    Here we used Korn's inequality, c.f. \cite[Theorem 6.3-4]{zbMATH07478418}, with Korn's constant $k_{\Omega}$ in view of the homogeneous Dirichlet boundary condition for $v_{k+1}$ together with Assumption~\ref{ASM:material paramaters} for $\nu$ to deduce the first estimate, \eqref{discrete total energy estimate} for the second estimate, and finally  Young's inequality to conclude the third estimate. Therefore, by rearranging terms on both sides, we arrive at
    \begin{equation}\label{bound on upper bound energy}
        E^{N}(t_{k+1})\leq E^{N}(t_{k})+C\int_{t_{k}}^{t_{k+1}}\lVert f^{N}\rVert_{H^{-1}}^{2}\dd{\tau}.
    \end{equation}
    Observe that, by rearranging terms and repeating this estimate for $k,k-1,k-2\ldots$, \eqref{bound on upper bound energy} yields 
    \begin{equation*}
    \begin{split}
       E^{N}(t_{k+1})-C\int_{t_{k}}^{t_{k+1}}\lVert f^{N}\rVert_{H^{-1}}^{2}\dd{\tau}
       &\leq E^{N}(t_{k})-C\int_{t_{k-1}}^{t_{k}}\lVert f^{N}\rVert_{H^{-1}}^{2}\dd{\tau} \\
       &\leq E^{N}(t_{k-1})-C\int_{t_{k-2}}^{t_{k-1}}\lVert f^{N}\rVert_{H^{-1}}^{2}\dd{\tau}\leq\ldots \leq E^{N}(0)\,.
       \end{split}
    \end{equation*}
    This implies that the function $E^{N}(t)-C\int_{0}^{t}\lVert f^{N}\rVert_{H^{-1}}^{2}\dd{\tau}$ is  monotonically decreasing in $t$.  Clearly, $\int_{0}^{t}\lVert f^{N}\rVert_{H^{-1}}^{2}\dd{\tau}$ is a monotonically increasing function in $t$.  Thus, the function $E^{N}$ is given as the sum of the  monotonically decreasing function $E^{N}(t)-C\int_{0}^{t}\lVert f^{N}\rVert_{H^{-1}}^{2}\dd{\tau}$ and the monotonically increasing function $C\int_{0}^{t}\lVert f^{N}\rVert_{H^{-1}}^{2}\dd{\tau}$. In addition, by construction, $E^{N}(0)$ is uniformly bounded and $\int_{0}^{t}\lVert f^{N}\rVert_{H^{-1}}^{2}\dd{\tau}$ is uniformly bounded for a.e.\ $t\in(0,T^{\prime})$ for any $T'<T$ by Assumption \ref{ASM:initial data}. Therefore, $E^{N}(t)$ is uniformly bounded in $BV([0,T^{\prime}])$ and we obtain\begin{subequations}
    \begin{equation}
    \label{conv-E-BV}
        E^{N}\overset{*}{\rightharpoonup}E\text{ in }BV([0,T^{\prime}])
    \end{equation}
    for some $E\in BV_{\mathrm{loc}}([0,T])$. By the compact embedding of $BV$ spaces, we further have
    \begin{align}
        E^{N}&\to E\text{ in }L^{1}([0,T^{\prime})),\\
        E^{N}(t)&\to E(t)\text{ for a.e. }t\in[0,T^{\prime}]
    \end{align}
    along a not relabelled subsequence.  
    Since \eqref{bound on upper bound energy} provides an $L^\infty$-bound on $(E^N)_N$, we can improve the strong $L^1$-convergence to  
    \begin{equation}
        E^{N}\to E\text{ in }L^{p}([0,T^{\prime}))
        \;\text{ for any $p\in[1,\infty)$.}
    \end{equation}
     Thanks to the uniformly bounded total variation, we have
    \begin{equation}
        \int_{0}^{T^{\prime}}\abs{\overline{E}^{N}-E^{N}}\dd{\tau}\leq Ch\to0.
    \end{equation}
    Therefore, we derive
    \begin{align}
        \overline{E}^{N}&\to E\quad\,\text{ in }L^{p}([0,T^{\prime}))\;\;\text{ for any }p\in[1,\infty),
        \label{strong convergence of piecewise linear auxiliary energy}\\
        \overline{E}^{N}(t)&\to E(t)\text{ for a.e.\ }t\in[0,T^{\prime}].
        \label{ae convergence of piecewise linear auxiliary energy}
    \end{align}\end{subequations}
    For any $\zeta\in C_{0}^{\infty}([0,T))$ such that $\zeta(t)\geq0$ for all $t\in(0,T)$, we observe that 
    \begin{align*}
        \int_{0}^{T}\zeta(t)E(t)\dd{t}=&\lim_{N\to\infty}\int_{0}^{T}\zeta(t)E^{N}(t)\dd{t}
        \\
        =&\lim_{N\to\infty}\int_{0}^{T}\zeta(t)\mathcal{E}(v^{N}(t),S^{N}(t),\varphi^{N}(t))\dd{t}
        \geq\int_{0}^{T}\zeta(t)\mathcal{E}(v(t),S(t),\varphi(t))\dd{t},
    \end{align*}
    where the last inequality holds, thanks to the convergence results \eqref{weak convergence of piecewise constant interpolation velocity space derivative}-\eqref{strong convergence of piecewise constant interpolation velocity}. This implies 
    \begin{equation*}
    E\geq\mathcal{E}(v,S,\varphi)\quad\text{a.e.\ in $(0,T)$.}
    \end{equation*}
This result together with the convergence results \eqref{conv-E-BV} and \eqref{conv-sols} provides the regularity statement \eqref{EVS-reg} in Definition \ref{defi: EVS}.  
\paragraph{Step 2: Proof of a discrete version of estimate \eqref{EVS inequality} using \eqref{time discrete problem} and \eqref{discrete total energy estimate}.} To this end, we take $\phi\in C_{0}^{\infty}([0,T^{\prime}))$ with $\phi\geq0$, $\tilde{v}\in C_{0,\mathrm{div}}^{\infty}(\Omega\times[0,T))$, $\tilde{S}\in C_{0,\mathrm{sym,Tr}}^{\infty}(\Omega\times[0,T))$, $\tilde{\varphi}\in C_{0}^{\infty}(\Omega\times[0,T))$, and $\tilde{\mu}\in C_{0}^{\infty}(\Omega\times[0,T))$. Inserting $\Phi_{k}:=\int_{k}^{k+1}-\phi\tilde{v}\dd{x}$ into~\eqref{discrete velocity} and summing over $k=\{0,1,\ldots,N\}$ provides
\begin{subequations}
\begin{equation}\label{tested discrete velocity}
    \begin{aligned}
    -&\int_{0}^{T^{\prime}}\phi\int_{\Omega}\partial_{t}(\overline{\rho v}^{N})\cdot\tilde{v}\dd{x}\dd{\tau}
    +\int_{0}^{T^{\prime}}\phi\int_{\Omega}\rho_{h}^{N}v^{N}\otimes v^{N}:\nabla\tilde{v}\dd{x}\dd{\tau}
    \\
    -&\int_{0}^{T^{\prime}}\phi\int_{\Omega}2\nu(\varphi_{h}^{N})\sym{\nabla v^{N}}:\sym{\nabla\tilde{v}}+\eta(\varphi_{h}^{N})S^{N}:\nabla\tilde{v}\dd{x}\dd{\tau}
    \\
    +&\int_{0}^{T^{\prime}}\phi\int_{\Omega}v^{N}\otimes J^{N}:\nabla\tilde{v}
    +\mu^{N}\nabla\varphi_{h}^{N}\cdot\tilde{v}\dd{x}\dd{\tau}
    \\
    =-&\int_{0}^{T^{\prime}}\phi\int_{\Omega}\langle f^{N},\tilde{v} \rangle_{H^{1}}\dd{\tau}.
    \end{aligned}
\end{equation}
Similarly, inserting $\Psi_{k}:=\int_{t_{k}}^{t+1}-\phi\tilde{S}\dd{t}$ into~\eqref{discrete stress} and summing over $k=\{0,1,\ldots,N\}$ yields
\begin{equation}\label{tested discrete stress}
    \begin{aligned}
        -&\int_{0}^{T^{\prime}}\phi\int_{\Omega}\partial_{t}\overline{S}^{N}:\tilde{S} \dd{x}\dd{\tau}
		-\int_{0}^{T^{\prime}}\phi\int_{\Omega}(v^{N}\cdot\nabla S^{N}):\tilde{S} \dd{x}\dd{\tau} 
		\\
		-&\int_{0}^{T^{\prime}}\phi\int_{\Omega}(S^{N}\skw{\nabla v^{N}}-\skw{\nabla v^{N}}S^{N}):\tilde{S} \dd{x}\dd{\tau}
		-\int_{0}^{T^{\prime}}\phi\langle \xi^{N},\tilde{S} \rangle_{H_{\mathrm{sym,Tr}}^{1}}\dd{\tau} 
		\\
		-&\int_{0}^{T^{\prime}}\phi\int_{\Omega}\gamma\nabla S^{N}\threedotsbin\nabla\tilde{S} \dd{x}\dd{\tau}
		 =-\int_{0}^{T^{\prime}}\phi\int_{\Omega}\eta(\varphi_{h}^{N})\sym{\nabla v^{N}}:\tilde{S} \dd{x}\dd{\tau}, 
    \end{aligned}
\end{equation}
where $\xi^{N}\in\partial\tilde{\mathcal{P}}(\varphi_{h}^{N};S^{N})$. Moreover, multiplying ~\eqref{discrete first CH} by $-\phi\tilde{\varphi}$ and integrating over space and time provides
\begin{equation}\label{tested discrete first CH}
    \begin{aligned}
        -\int_{0}^{{T}^{\prime}}\phi\int_{\Omega}\partial_{t}\overline{\varphi}^{N}\tilde{\varphi}\dd{x}\dd{\tau}
        -\int_{0}^{T^{\prime}}\phi\int_{\Omega}(v^{N}\nabla\varphi_{h}^{N})\tilde{\varphi}\dd{x}\dd{\tau}
        =\int_{0}^{T^{\prime}}\phi\int_{\Omega}m(\varphi_{h}^{N})\nabla\mu^{N}\cdot\nabla\tilde{\varphi}\dd{x}\dd{\tau}.
    \end{aligned}
\end{equation}
Similarly, multiplying~\eqref{discrete second CH} by $-\phi\tilde{\mu}$, and integrating over space and time gives
\begin{equation}\label{tested discrete second CH}
    \begin{aligned}
        -\int_{0}^{T^{\prime}}\phi\int_{\Omega}\mu^{N}\tilde{\mu}+\kappa\frac{\varphi^{N}+\varphi_{h}^{N}}{2}\tilde{\mu}\dd{x}\dd{\tau}
        =-\int_{0}^{T^{\prime}}\phi\int_{\Omega}\left(-\Delta\varphi^{N}+W_{\kappa}^{\prime}(\varphi^{N})\right)\tilde{\mu}\dd{x}\dd{\tau}.
    \end{aligned}
\end{equation}
In addition, multiplying~\eqref{discrete total energy estimate} by $\phi$, and integrating over time gives
\begin{equation*}
        \begin{aligned}
&\int_{0}^{{T}^{\prime}}\phi\partial_{t}\overline{E}^{N}\dd{\tau}
        +\int_{0}^{T^{\prime}}\phi\int_{\Omega}
        2\nu(\varphi_{h}^{N})\abs{\sym{\nabla v^{N}}}^{2}
        +\gamma\abs{\nabla S^{N}}^{2}
        +m(\varphi_{h}^{N})\abs{\nabla\mu^{N}}^{2}
        \dd{x}\dd{\tau}\\
&\phantom{\int_{0}^{{T}^{\prime}}
\phi\partial_{t}\overline{E}^{N}\dd{\tau}}
        \,+\int_{0}^{T^{\prime}}\phi\langle \xi^{N},S^{N} \rangle_{H_{\mathrm{sym,Tr}}^{1}}\dd{\tau}
        \\
        &\leq \int_{0}^{T^{\prime}}\phi\langle f^{N}, v^{N}\rangle_{H^{1}}\dd{\tau}\,.
        \end{aligned}
    \end{equation*}
    Here $\xi^{N}\in\partial\tilde{\mathcal{P}}(\varphi_{h}^{N};S^{N}),$ which implies 
\begin{equation*}
    \begin{aligned}
        \langle \xi^{N}, S^{N}-\tilde{S} \rangle_{H_{\mathrm{sym,Tr}}^{1}}
        \geq
        \widetilde{\mathcal{P}}(\varphi_{h}^{N};S^{N})-\widetilde{\mathcal{P}}(\varphi_{h}^{N};\tilde{S})
        ={\mathcal{P}}(\varphi_{h}^{N};S^{N})-{\mathcal{P}}(\varphi_{h}^{N};\tilde{S}).
    \end{aligned}
\end{equation*}
    Using this information in the estimate above we arrive at 
\begin{equation}
\label{tested discrete total energy estimate}
        \begin{aligned}
        &\int_{0}^{{T}^{\prime}}\phi\partial_{t}\overline{E}^{N}\dd{\tau}+
        \int_{0}^{T^{\prime}}\phi\int_{\Omega}
        2\nu(\varphi_{h}^{N})\abs{\sym{\nabla v^{N}}}^{2}
        +\gamma\abs{\nabla S^{N}}^{2}
        +m(\varphi_{h}^{N})\abs{\nabla\mu^{N}}^{2}
        \dd{x}\dd{\tau}\\
        &\phantom{\int_{0}^{{T}^{\prime}}
\phi\partial_{t}\overline{E}^{N}\dd{\tau}}
        \,
        +\int_{0}^{T^{\prime}}\phi
        \left(\mathcal{P}(\varphi_{h}^{N};S^{N})
    -\mathcal{P}(\varphi_{h}^{N};\tilde{S})\right)
        \dd{\tau}
        \\
        &\leq \int_{0}^{T^{\prime}}\phi\langle f^{N}, v^{N}\rangle_{H^{1}}\dd{\tau}.
        \end{aligned}
    \end{equation} 
\end{subequations}
Hence, summing up \eqref{tested discrete velocity}-\eqref{tested discrete total energy estimate} and applying integration by parts in time, we obtain
\begin{equation}\label{time discrete EVS}
    \begin{aligned}
    -&\int_{0}^{T^{\prime}}\phi^{\prime}
    \left(\overline{E}^{N}(\tau)-\langle \overline{\rho v}^{N},\tilde{v}\rangle_{L^{2}}-\langle \overline{S}^{N},\tilde{S} \rangle_{L^{2}}-\langle \overline{\varphi}^{N},\tilde{\varphi} \rangle_{L^{2}}\right)
    \dd{\tau}
        \\
    +&\int_{0}^{T^{\prime}}\phi\int_{\Omega}
    2\nu(\varphi_{h}^{N})\abs{\sym{\nabla v^{N}}}^{2}
    +\gamma\abs{\nabla S^{N}}^{2}
    +m(\varphi_{h}^{N})\abs{\nabla\mu^{N}}^{2}\dd{x}\dd{\tau}
        \\
    +&\int_{0}^{T^{\prime}}\phi\left(\mathcal{P}(\varphi_{h}^{N};S^{N})
    -\mathcal{P}(\varphi_{h}^{N};\tilde{S})
    \right)\dd{\tau}
        \\
    +&\int_{0}^{T^{\prime}}\phi\int_{\Omega}
    \overline{\rho v}^{N}\cdot\partial_{t}\tilde{v}
    +\rho_{h}^{N} v^{N}\otimes v^{N}:\nabla\tilde{v}
    +v^{N}\otimes J^{N}:\nabla\tilde{v}
    \dd{x}\dd{\tau}
        \\
    -&\int_{0}^{T^{\prime}}\phi\int_{\Omega}
    2\nu(\varphi_{h}^{N})\sym{\nabla v^{N}}:\sym{\nabla\tilde{v}}
    +\eta(\varphi_{h}^{N})S^{N}:\sym{\nabla\tilde{v}}
    -\mu^{N}\nabla\varphi_{h}^{N}\cdot\tilde{v}\dd{x}\dd{\tau}
        \\
    +&\int_{0}^{T^{\prime}}\phi\int_{\Omega}
    \overline{S}^{N}:\partial_{t}\tilde{S}
    +S^{N}\otimes v^{N}\threedotsbin\nabla\tilde{S}
    -\left( S^{N}\skw{\nabla v^{N}} - \skw{\nabla v^{N}}S^{N}\right):\tilde{S}
    \dd{x}\dd{\tau}
        \\
    -&\int_{0}^{T^{\prime}}\phi\int_{\Omega}
    \gamma \nabla S^{N}\threedotsbin \nabla\tilde{S} 
    -\eta(\varphi_{h}^{N})\sym{\nabla v^{N}}:\tilde{S}\dd{x}\dd{\tau}
        \\
    +&\int_{0}^{T^{\prime}}\phi\int_{\Omega}
    \overline{\varphi}^{N}\partial_{t}\tilde{\varphi}
    -(v^{N}\cdot\nabla\varphi_{h}^{N})\tilde{\varphi}
    -m(\varphi_{h}^{N})\nabla \mu^{N}:\nabla\tilde{\varphi}\dd{x}\dd{\tau}
        \\
    -&\int_{0}^{T^{\prime}}\phi\int_{\Omega}
    \mu^{N}\tilde{\mu}+\kappa\frac{\varphi^{N}+\varphi_{h}^{N}}{2}-(-\Delta\varphi^{N}+ W_{\kappa}^{\prime}(\varphi^{N}))\tilde{\mu}\dd{x}\dd{\tau}
        \\
    \leq&
    \int_{\Omega}\rho_{0}v_{0}\cdot\tilde{v}(0)+S_{0}:\tilde{S}(0)+\varphi_{0}^{N}\tilde{\varphi}(0)\dd{x}
    +\int_{0}^{T^{\prime}}\phi\langle f^{N},v^{N}-\tilde{v}\rangle_{H^{1}}\dd{\tau}\,.
    \end{aligned}
\end{equation}
\paragraph{Step 3: Proof of estimate \eqref{EVS inequality} by passing to the limit $N\to\infty$ in \eqref{time discrete EVS}.} 
Now, for the limit passage $N\to\infty$ in \eqref{time discrete EVS}, observe that 
\begin{equation}
    \int_{\Omega}\left( S^{N}\skw{\nabla v^{N}} - \skw{\nabla v^{N}}S^{N}\right):\tilde{S}\dd{x}
\end{equation}
contains the terms of the form
\begin{equation}
    \int_{\Omega} S_{ik}^{N}(\partial_{x_j}v_{k}^{N})\tilde{S}_{ij}\dd{x}.
\end{equation}
Hence, with the help of integration by parts and using the Dirichlet boundary condition, we have
\begin{equation}
\label{trick-S-skw}
    \int_{\Omega} S_{ik}^{N}(\partial_{x_j}v_{k}^{N})\tilde{S}_{ij}\dd{x}
    =-\int_{\Omega} (\partial_{x_j}S_{ik}^{N}) v_{k}^{N}\tilde{S}_{ij}\dd{x}
    -\int_{\Omega} S_{ik}^{N} v_{k}^{N}(\partial_{x_{j}}\tilde{S}_{ij})\dd{x}.
\end{equation}
Thus, the strong $L^2$-convergence \eqref{strong convergence of piecewise constant interpolation velocity} of $(v^N)_N$ 
and the weak convergence result \eqref{weak convergence of piecewise constant interpolation stress} for $(S^N)_N$ are sufficient to pass to the limit on the right-hand side of \eqref{trick-S-skw}. Then, the integration by parts can be reversed in the limit.  
Therefore, also thanks to convergence results \eqref{weak convergence of piecewise constant interpolation velocity space derivative}-\eqref{strong convergence of piecewise constant interpolation velocity},~\eqref{strong convergence of piecewise constnat out loeading}, and~\eqref{strong convergence of piecewise linear auxiliary energy}-\eqref{ae convergence of piecewise linear auxiliary energy}, we can pass to the limit $N\to\infty$ in~\eqref{time discrete EVS} to arrive at
\begin{equation}
    \begin{aligned}
    -&\int_{0}^{T^{\prime}}\phi^{\prime}\left(E(\tau)-\langle \rho v,\tilde{v}\rangle_{L^{2}}-\langle S,\tilde{S} \rangle_{L^{2}}-\langle \varphi,\tilde{\varphi} \rangle_{L^{2}}\right)\dd{\tau}
        \\
    +&\int_{0}^{T^{\prime}}\phi\int_{\Omega}
    2\nu(\varphi)\abs{\sym{\nabla v}}^{2}
    +\gamma\abs{\nabla S}^{2}
    +m(\varphi)\abs{\nabla\mu}^{2}\dd{x}\dd{\tau}
    +\int_{0}^{T^{\prime}}\phi\left(\mathcal{P}(\varphi;S)-\mathcal{P}(\varphi;\tilde{S})\right)\dd{\tau}
        \\
    +&\int_{0}^{T^{\prime}}\phi\int_{\Omega}
    \rho v\cdot\partial_{t}\tilde{v}
    +\rho v\otimes v:\nabla\tilde{v}
    +v\otimes J:\nabla\tilde{v}
    -2\nu(\varphi)\sym{\nabla v}:\sym{\nabla\tilde{v}}
    -\eta(\varphi)S:\sym{\nabla\tilde{v}}\dd{x}\dd{\tau}
        \\
    +&\int_{0}^{T^{\prime}}\phi\int_{\Omega}
    \nabla\varphi\otimes\nabla\varphi:\nabla\tilde{v}\dd{x}\dd{\tau}
        \\
    +&\int_{0}^{T^{\prime}}\phi\int_{\Omega}
    S:\partial_{t}\tilde{S}
    +S\otimes v\threedotsbin\nabla\tilde{S}
    -\left( S\skw{\nabla v} - \skw{\nabla v}S\right):\tilde{S}-\gamma \nabla S\threedotsbin \nabla\tilde{S} 
    +\eta(\varphi)\sym{\nabla v}:\tilde{S}\dd{x}\dd{\tau}
        \\
    +&\int_{0}^{T^{\prime}}\phi\int_{\Omega}
    \varphi\partial_{t}\tilde{\varphi}
    -(v\cdot\nabla\varphi)\tilde{\varphi}
    -m(\varphi)\nabla \mu:\nabla\tilde{\varphi}\dd{x}\dd{\tau}
        \\
    -&\int_{0}^{T^{\prime}}\phi\int_{\Omega}
    \mu\tilde{\mu}-(-\Delta\varphi+ W^{\prime}(\varphi))\tilde{\mu}\dd{x}\dd{\tau}
        \\
    \leq&\int_{\Omega}\rho_{0}v_{0}\cdot\tilde{v}(0)+S_{0}:\tilde{S}(0)+\varphi_{0}\tilde{\varphi}(0)\dd{x}
    +\int_{0}^{T^{\prime}}\phi\langle f,v-\tilde{v}\rangle_{H^{1}}\dd{\tau}\,.
    \end{aligned}
\end{equation}
By applying Lemma~\ref{Lem: equivalence weak form}, we obtain~\eqref{EVS inequality} pointwise in time for all $s<t\in(0,T)$. 

Altogether, we have shown that the limit quintuplet $(v,S,\varphi,\mu,E)$ extracted by convergence results \eqref{conv-sols} and \eqref{conv-E-BV} satisfies inequality \eqref{EVS inequality} that characterizes an energy-variational solution.

Moreover, by following the lines of \cite[Theorem 4.4]{arxiv:2509.25508}, we further infer that $\varphi$ takes values in $(-1,1)$ a.e. in $\Omega\times(0,T)$.

\paragraph{Step 4: Attainment of the initial conditions.} It remains to show that the components $(v,S,\varphi,E)$ of the energy-variational solution obtained by convergence results \eqref{conv-sols} and \eqref{conv-E-BV} indeed attain the initial condition at $t=0$  for the given initial data $(v_{0},S_{0},\varphi_{0},\mathcal{E}(v_{0},S_{0},\varphi_{0}))$. 
Indeed, after possibly redefining the functions on a set 
of measure zero, we obtain directly from~\eqref{EVS inequality} that
\begin{equation}\label{right limit of initial datas}
\begin{aligned}
    &\lim_{t\to0}\left(E(t)-\langle \rho(t) v(t),\tilde{v}(t)\rangle_{L^{2}}-\langle S(t),\tilde{S}(t) \rangle_{L^{2}}-\langle \varphi(t),\tilde{\varphi}(t) \rangle_{L^{2}}\right)
    \\
    =&\mathcal{E}(v_{0},S_{0},\varphi_{0})-\langle \rho_{0} v_{0},\tilde{v}(t)\rangle_{L^{2}}-\langle S(t),\tilde{S}(t) \rangle_{L^{2}}-\langle \varphi(t),\tilde{\varphi}(t) \rangle_{L^{2}}.
\end{aligned}
\end{equation}
Therefore, by choosing $\tilde{v}\equiv0$, $\tilde{S}\equiv0$, $\tilde{\varphi}\equiv0$, we find
\begin{equation}
    E(0_{+})\leq\mathcal{E}(v_{0},S_{0},\varphi_{0})
\end{equation}
such that $E(0)=\mathcal{E}(v_{0},S_{0},\varphi_{0})$. Then, by multiplying~\eqref{right limit of initial datas} by $\alpha>0$ and choosing $\tilde{v}\equiv0$, $\tilde{\varphi\equiv0}$, and $\tilde{S}=\alpha^{-1}\Psi$ for $\Psi\in C_{0,\mathrm{sym,Tr}}^{\infty}(\Omega\times[0,T)$, we have
\begin{equation*}
    \lim_{t\to0_{+}}\langle S(t),\Psi\rangle_{L^{2}}\leq\langle S_{0},\Psi\rangle_{L^{2}}.
\end{equation*}
The same inequality holds for $\Psi$ replaced by $-\Psi$ which implies
\begin{equation*}
    \langle S(0_{+})-S_{0},\Psi\rangle_{L^{2}}=0.
\end{equation*}
Therefore, we infer $S(0_{+})=S_{0}$ in $L^{2}(\Omega)$ with the help of~\cite[Lemma 2.2]{ALREVS}, and, by continuous extension, we conclude $S(0)=S_{0}$. Analogously, one can show $\varphi(0)=\varphi_{0}$. Moreover, with the same argument we derive 
\begin{equation*}
    \lim_{t\to0_{+}}\langle \rho(t)v(t),\Phi\rangle_{L^{2}}\leq\langle \rho_{0}v_{0},\Phi\rangle_{L^{2}}.
\end{equation*}
Using that $\rho_{0}>0$, we also obtain $v(0_{+})=v_{0}$, and, therefore, we conclude $v(0)=v_{0}$. This finishes the proof of Theorem \ref{exitence evs regulairzed system}. 
\end{proof}
\begin{remark}
    From Proposition~\ref{recover strong GT}, one can see that an energy-variational solution obtained from Theorem~\ref{exitence evs regulairzed system} also satisfies
    \begin{equation}
        \mu=-\Delta\varphi+W^{\prime}(\varphi).
    \end{equation}
    Moreover, from Propositions~\ref{recover weak momentum balance} and~\ref{recover weak CH}, one can observe that the weak formulations~\eqref{weak formualtion: momentum} and~\eqref{weak formualtion: CH} of the momentum balance and the phase-field evolution law are satisfied as well.
\end{remark}

\subsection{Existence of energy-variational solutions for system \eqref{sys:two phase} with  $\gamma=0$}

Repeating the arguments for the proof of \cite[Theorem 5.1]{arxiv:2509.25508}, we can perform the limit passage $\gamma\to0$ and thus obtain the existence of energy-variational solutions for the non-regularized system~\eqref{sys:two phase} with $\gamma=0$ and a logarithmic phase-field potential, i.e.\ with $W=W_{\mathrm{dw}}+W_{\mathrm{sg}}.$ 
\begin{theorem}\label{exitence evs non-regulairzed system}
Let $\gamma=0$ and the phase-field potential $W=W_{\mathrm{dw}}+W_{\mathrm{sg}}$ in \eqref{def-W}. Let the Assumptions~\ref{ASM:domain}-\ref{ASM:Double well potential} be satisfied. Then, there exists an energy-variational solution $(v,S,\varphi,\mu,E)$ of type $\mathcal{K}$ to system~\eqref{sys:two phase} with $\gamma=0$ in the sense of Definition~\ref{defi: EVS} with $E(0)=\mathcal{E}(v_{0},S_{0},\varphi_{0})$ and where the regularity weight is given by
\begin{equation}
    \mathcal{K}(\tilde{S}):=\frac{k_{\Omega}^{2}}{\nu_{1}}\lVert \tilde{S} \rVert_{L^{\infty}}^{2}
        \label{regularity weight for non-regularized}
\end{equation}
for all $(\tilde{v},\tilde{S},\tilde{\varphi},\tilde{\mu})\in\mathfrak{T}$. Here, the constant $k_{\Omega}>0$ is the constant from Korn's inequality and $\nu_{1}$ is the constant from Assumption~\ref{ASM:material paramaters}. Moreover, the phase-field variable $\varphi$ takes values in $(-1,1)$ a.e. in $\Omega\times(0,T)$.
\end{theorem}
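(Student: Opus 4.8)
The plan is to obtain the energy-variational solution for $\gamma=0$ as a limit of the energy-variational solutions $(v_\gamma,S_\gamma,\varphi_\gamma,\mu_\gamma,E_\gamma)$ of type $\mathcal K\equiv0$ for the regularized system, which exist by Theorem~\ref{exitence evs regulairzed system}. Since the system~\eqref{sys:two phase} with logarithmic potential was already analyzed at the level of dissipative solutions in~\cite{arxiv:2509.25508} and the limit $\gamma\to0$ carried out there, I would reuse the compactness part of \cite[Theorem~5.1]{arxiv:2509.25508} and concentrate on the two genuinely new points: the construction and convergence of the auxiliary energy $E$, and — the crux — the limit passage in the last two terms of the Zaremba--Jaumann derivative, which is exactly what forces the regularity weight~\eqref{regularity weight for non-regularized}. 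The conceptual advantage exploited here is that, in contrast to the dissipative-solution setting, no Gronwall argument is needed, so the limit rests purely on weak lower semicontinuity.

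\textbf{Uniform bounds and compactness.} Testing~\eqref{EVS inequality} for the regularized solutions with all test functions equal to zero, using $\mathcal P(\varphi_\gamma;0)=0$, $E_\gamma\ge\mathcal E(v_\gamma,S_\gamma,\varphi_\gamma)$, $E_\gamma(0)=\mathcal E(v_0,S_0,\varphi_0)$ and Young's and Korn's inequality to absorb $\langle f,v_\gamma\rangle_{H^1}$, yields the $\gamma$-independent bounds: $v_\gamma$ in $L^\infty_{\mathrm{loc}}(L^2_{\mathrm{div}})\cap L^2_{\mathrm{loc}}(H^1_{0,\mathrm{div}})$, $S_\gamma$ in $L^\infty_{\mathrm{loc}}(L^2_{\mathrm{sym,Tr}})$, $\sqrt\gamma\,\nabla S_\gamma$ in $L^2_{\mathrm{loc}}(L^2)$ (hence $\gamma\nabla S_\gamma\to0$ in $L^2_{\mathrm{loc}}(L^2)$), $\varphi_\gamma$ in $L^\infty_{\mathrm{loc}}(H^1)\cap L^2_{\mathrm{loc}}(H^2)$ with $\abs{\varphi_\gamma}\le1$, $\mu_\gamma$ in $L^2_{\mathrm{loc}}(H^1)$, $\int_0^{T'}\mathcal P(\varphi_\gamma;S_\gamma)\dd{\tau}$ bounded, and $E_\gamma$ in $BV_{\mathrm{loc}}([0,T])$. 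Banach--Alaoglu and Helly's selection principle for $(E_\gamma)$, together with Aubin--Lions (using the weak momentum and Cahn--Hilliard formulations, valid for $\gamma>0$ by the Remark after Theorem~\ref{exitence evs regulairzed system}, to bound $\partial_t(\rho_\gamma v_\gamma)$ and $\partial_t\varphi_\gamma$), then provide a subsequence with the analogues of~\eqref{conv-sols} and~\eqref{conv-E-BV}, in particular $\varphi_\gamma\to\varphi$ strongly in $L^2_{\mathrm{loc}}(H^1)$, $v_\gamma\to v$ strongly in $L^2_{\mathrm{loc}}(L^2)$ and $E_\gamma\to E$ a.e.\ and in $L^p_{\mathrm{loc}}$. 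Testing $\int_0^{T'}\zeta E_\gamma$ against $\zeta\ge0$ and using weak lower semicontinuity of $\mathcal E$ gives $E\ge\mathcal E(v,S,\varphi)$ a.e., so that~\eqref{EVS-reg} holds.

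\textbf{Limit passage — the critical term (main obstacle).} I would add the non-negative quantity $\int_s^t\mathcal K(\tilde S)\bigl(E_\gamma-\mathcal E(v_\gamma,S_\gamma,\varphi_\gamma)\bigr)\dd{\tau}\ge0$, with $\mathcal K$ as in~\eqref{regularity weight for non-regularized}, to the right-hand side of the $\gamma>0$ inequality~\eqref{EVS inequality}, which preserves it. Splitting $2\nu(\varphi_\gamma)=2\nu_1+2(\nu(\varphi_\gamma)-\nu_1)$ with $\nu(\varphi_\gamma)-\nu_1\ge0$ and moving the elastic part $-\mathcal K(\tilde S)\mathcal E_{\mathrm{el}}(S_\gamma)$ of the added term to the left, the left-hand side comes to contain
\begin{equation*}
\int_s^t\!\int_\Omega 2\nu_1\abs{\sym{\nabla v_\gamma}}^2-\bigl(S_\gamma\skw{\nabla v_\gamma}-\skw{\nabla v_\gamma}S_\gamma\bigr):\tilde S+\tfrac{1}{2}\mathcal K(\tilde S)\abs{S_\gamma}^2\dd{x}\dd{\tau}\,.
\end{equation*}
By Young's inequality, $\abs{\bigl(S\skw{\nabla v}-\skw{\nabla v}S\bigr):\tilde S}\le C\|\tilde S\|_{L^\infty}\abs{S}\,\abs{\skw{\nabla v}}$, and Korn's inequality bounds $\|\skw{\nabla v}\|_{L^2}$ by a constant proportional to $k_\Omega$ times $\|\sym{\nabla v}\|_{L^2}$; one checks that for the precise value of the weight in~\eqref{regularity weight for non-regularized} the displayed expression is a non-negative, strongly continuous quadratic form on $L^2(s,t;H^1_{0,\mathrm{div}}(\Omega)\times L^2_{\mathrm{sym,Tr}}(\Omega))$, hence convex, hence weakly lower semicontinuous. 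This is the step that both pins down $\mathcal K$ and constitutes the main difficulty, since the integration-by-parts identity~\eqref{trick-S-skw} used in the $\gamma>0$ case is no longer available. The leftover summand $\int_s^t\!\int_\Omega 2(\nu(\varphi_\gamma)-\nu_1)\abs{\sym{\nabla v_\gamma}}^2$ is lower semicontinuous by the strong convergence of $\varphi_\gamma$; the terms $\gamma\abs{\nabla S_\gamma}^2\ge0$ are simply dropped on the left and $\gamma\nabla S_\gamma\threedotsbin\nabla\tilde S\to0$; all remaining products are of type (strongly convergent)$\times$(weakly convergent) — e.g.\ $\rho_\gamma v_\gamma\otimes v_\gamma$, $v_\gamma\cdot\nabla\varphi_\gamma$, $\nabla\varphi_\gamma\otimes\nabla\varphi_\gamma$, $\eta(\varphi_\gamma)S_\gamma$, $m(\varphi_\gamma)\nabla\mu_\gamma$ — and pass directly.

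\textbf{Conclusion.} Taking $\liminf$ on the left and $\limsup$ on the right — using once more weak lower semicontinuity of $\mathcal E_{\mathrm{kin}}$ and $\mathcal E_{\mathrm{pf}}$, which on the right combine with $E_\gamma\to E$ to reproduce $\int_s^t\mathcal K(\tilde S)(E-\mathcal E(v,S,\varphi))\dd{\tau}$, and $\langle f,v_\gamma\rangle_{H^1}\to\langle f,v\rangle_{H^1}$ — and then invoking Lemma~\ref{Lem: equivalence weak form} to pass from the form tested against $\phi\ge0$ to the pointwise inequality for a.e.\ $s<t$ including $s=0$, establishes~\eqref{EVS inequality} with $\gamma=0$ and the weight~\eqref{regularity weight for non-regularized}. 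The attainment of the initial data, including $E(0)=\mathcal E(v_0,S_0,\varphi_0)$, follows exactly as in Step~4 of the proof of Theorem~\ref{exitence evs regulairzed system}. Finally, $\varphi\in(-1,1)$ a.e.\ in $\Omega\times(0,T)$ follows from the logarithmic structure: Proposition~\ref{recover strong GT} gives $\mu=-\Delta\varphi+W_{\mathrm{dw}}'(\varphi)+W_{\mathrm{sg}}'(\varphi)$ with $\mu\in L^2_{\mathrm{loc}}(H^1)$, so $W_{\mathrm{sg}}'(\varphi)\in L^2_{\mathrm{loc}}(L^2)$, which by~\eqref{assumption on singular potential} prevents $\varphi$ from reaching $\pm1$ on a set of positive measure, as in~\cite[Theorems~4.4 and~5.1]{arxiv:2509.25508}.
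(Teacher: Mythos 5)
Your proposal is correct and follows essentially the same route the paper intends: the paper's proof is just a one-line reference to the $\gamma\to 0$ limit of the companion preprint, but the strategy it invokes — uniform bounds from the EVS energy inequality with zero test functions, Helly for the auxiliary energy, and convexification of the Zaremba–Jaumann terms $(S\skw{\nabla v}-\skw{\nabla v}S):\tilde{S}$ via the weight $\mathcal{K}(\tilde{S})=\tfrac{k_\Omega^2}{\nu_1}\|\tilde{S}\|_{L^\infty}^2$ so that the quadratic form $2\nu_1\abs{\sym{\nabla v}}^2-(S\skw{\nabla v}-\skw{\nabla v}S):\tilde{S}+\tfrac12\mathcal{K}(\tilde{S})\abs{S}^2$ is nonnegative, hence convex, hence weakly lower semicontinuous — is exactly the one spelled out in the paper's introduction. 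You correctly identify that the integration-by-parts device~\eqref{trick-S-skw} available for $\gamma>0$ is lost once $\nabla S_\gamma$ is no longer $L^2$-controlled, that this is what forces the nontrivial regularity weight, and that the $(-1,1)$-confinement of $\varphi$ follows from $W_{\mathrm{sg}}'(\varphi)\in L^2_{\mathrm{loc}}(L^2)$ via Proposition~\ref{recover strong GT} and the blow-up in~\eqref{assumption on singular potential}.

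=== Begin note ===
One small presentational inaccuracy: after you move $\mathcal{K}(\tilde{S})\mathcal{E}_{\mathrm{el}}(S_\gamma)$ to the left, the right-hand side carries $\mathcal{K}(\tilde{S})\bigl(E_\gamma-\mathcal{E}_{\mathrm{kin}}(\varphi_\gamma,v_\gamma)-\mathcal{E}_{\mathrm{pf}}(\varphi_\gamma)\bigr)$, not the full $E_\gamma-\mathcal{E}$; one passes to the limit there (weak lower semicontinuity of $\mathcal{E}_{\mathrm{kin}}$, $\mathcal{E}_{\mathrm{pf}}$ gives $\limsup$ of that bracket $\le E-\mathcal{E}_{\mathrm{kin}}-\mathcal{E}_{\mathrm{pf}}$) and only afterwards moves $\tfrac12\mathcal{K}(\tilde{S})\|S\|_{L^2}^2$ back to the right to reassemble $\mathcal{K}(\tilde{S})\bigl(E-\mathcal{E}(v,S,\varphi)\bigr)$. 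Your sketch glosses this, but the bookkeeping is sound. Also note that for divergence-free $v\in H^1_0$ one actually has $\|\skw{\nabla v}\|_{L^2}=\|\sym{\nabla v}\|_{L^2}$; the $k_\Omega$ in~\eqref{regularity weight for non-regularized} reflects the paper's normalization of Korn's inequality in terms of the full $H^1$-norm, as in the discrete estimate in the proof of Theorem~\ref{exitence evs regulairzed system}. Neither point affects the validity of your argument.
=== End note ===
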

\begin{proof}
    This theorem can be shown by repeating the arguments of the proof of~\cite[Theorem 5.1]{arxiv:2509.25508}.
\end{proof}
\begin{remark}\label{EVS satisfies strong TG law}
    From Proposition~\ref{recover strong GT}, one can see that an energy-variational solution obtained from Theorem~\ref{exitence evs non-regulairzed system} also satisfies
    \begin{equation}
        \mu=-\Delta\varphi+W^{\prime}(\varphi).
    \end{equation}
    Moreover, from Propositions \ref{recover weak momentum balance} and~\ref{recover weak CH}, one can deduce that also the weak formulations~\eqref{weak formualtion: momentum} and~\eqref{weak formualtion: CH} of the momentum balance and the phase-field evolution law are satisfied.
\end{remark}

\section{Limit passage from a logarithmic to a double-obstacle 
potential for  system \eqref{sys:two phase} with $\gamma=0$} \label{sec:5}
In Section~\ref{Sec:singular}, we established the existence of energy-variational solutions for the geodynamical two-phase system \eqref{sys:two phase} with a logarithmic potential satisfying Assumption~\ref{ASM:singular potential} both in the regularized setting with stress diffusion $\gamma>0$ and in the non-regularized setting without stress diffusion $\gamma=0$. However, both for analytical and modeling reasons, the non-smooth double-obstacle potential is often preferred: it confines the phase-field variable $\varphi$ to the interval $[-1,1]$ by taking the value $+\infty$ in $\mathbb{R}\backslash[-1,1],$ but allows $\varphi$ to attain the pure phases located in $-1$ and $1$. Therefore, in this Section, we investigate the non-regularized system with double-obstacle potentials satisfying Assumption~\ref{ASM:Double well potential} and prove the existence of energy-variational solutions and dissipative solutions by means of variational convergence using an approximation of the obstacle potential by logarithmic potentials. To do so, we first discuss the variational convergence of the logarithmic to the double-obstacle phase-field energy in the sense of Mosco-convergence as well as important compactness results for the the limit passage in Section \ref{Sec:Mosco}. Subsequently we perform the limit passage $\alpha\searrow0$ in the setting of energy-variational solutions in Section \ref{sec: alpha limit evs} and carry out the corresponding limit passage in the setting of dissipative solutions in Section \ref{Sec:dissipsol-alpha0}.
\subsection{Mosco convergence and compactness}
\label{Sec:Mosco}
For $\alpha>0$, we define a logarithmic potential as 
\begin{equation}
    W_{\mathrm{sg},\alpha}(\varphi):=\alpha\left((1+\alpha+\varphi)\ln(1+\alpha+\varphi)+(1+\alpha-\varphi)\ln(1+\alpha-\varphi)\right).
\end{equation}
In this way, the singularities of $W_{\mathrm{sg},\alpha}$ are located in $1+\alpha$ and $-1-\alpha$. 
Moreover, we introduce the modified phase-field potential
\begin{equation}\label{modified potential}
    {W}_{\alpha}(\varphi):=W_{\mathrm{dw}}(\varphi)+ W_{\mathrm{sg},\alpha}(\varphi).
\end{equation}
Notice that $W_{\alpha}$ satisfies property \eqref{assumption on singular potential} of Assumption \ref{ASM:singular potential} when replacing the interval $(-1,1)$ by the interval $(-1-\alpha,1+\alpha)$. 
Thus, in the following, we first consider system~\eqref{sys:two phase} with the phase-field potential $W_{\alpha}$ as defined in~\eqref{modified potential}.
Then, for all $\alpha>0$,  Theorem~\ref{exitence evs non-regulairzed system} allows us to conclude that there exists an energy-variational solution $(v_{\alpha},S_{\alpha},\varphi_{\alpha},\mu_{\alpha},E_{\alpha})$ of system~\eqref{sys:two phase} for the non-regularized system $\gamma=0$ with potential $W_\alpha$ of type $\mathcal{K}$, where the regularity weight $\mathcal{K}$ is given by~\eqref{regularity weight for non-regularized}, i.e., $(v_{\alpha},S_{\alpha},\varphi_{\alpha},\mu_{\alpha},E_{\alpha})$ satisfies
\begin{subequations}
\label{EVS-alpha}
\begin{equation}\label{EVS inequality alpha}
\begin{aligned}
    &\left(E_{\alpha}(\tau)-\langle \rho_{\alpha} v_{\alpha},\tilde{v}\rangle_{L^{2}}-\langle S_{\alpha},\tilde{S} \rangle_{L^{2}}-\langle \varphi_{\alpha},\tilde{\varphi} \rangle_{L^{2}}\right)\Big|_{s}^{t}
        \\
    +&\int_{s}^{t}\int_{\Omega}2\nu(\varphi_{\alpha})\abs{\sym{\nabla v_{\alpha}}}^{2}+\gamma\abs{\nabla S_{\alpha}}^{2}+m(\varphi_{\alpha})\abs{\nabla\mu_{\alpha}}^{2}\dd{x}\dd{\tau}+\int_{s}^{t}\mathcal{P}(\varphi_{\alpha};S_{\alpha})-\mathcal{P}(\varphi_{\alpha};\tilde{S})\dd{\tau}
        \\
    +&\int_{s}^{t}\int_{\Omega}
    \rho_{\alpha} v_{\alpha}\cdot\partial_{t}\tilde{v}
    +\rho_{\alpha} v_{\alpha}\otimes v_{\alpha}:\nabla\tilde{v}
    +v_{\alpha}\otimes J_{\alpha}:\nabla\tilde{v}\dd{x}\dd{\tau}
    \\
    -&\int_{s}^{t}\int_{\Omega}2\nu(\varphi_{\alpha})\sym{\nabla v_{\alpha}}:\sym{\nabla\tilde{v}}
    +\eta(\varphi_{\alpha})S_{\alpha}:\sym{\nabla\tilde{v}}
    -\nabla\varphi_{\alpha}\otimes\nabla\varphi_{\alpha}:\nabla\tilde{v}\dd{x}\dd{\tau}
        \\
    +&\int_{s}^{t}\int_{\Omega}S_{\alpha}:\partial_{t}\tilde{S}
    +S_{\alpha}\otimes v_{\alpha}\threedotsbin\nabla\tilde{S}
    -\left( S_{\alpha}\skw{\nabla v_{\alpha}} - \skw{\nabla v_{\alpha}}S_{\alpha}\right):\tilde{S}
    +\eta(\varphi_{\alpha})\sym{\nabla v_{\alpha}}:\tilde{S}
    \dd{x}\dd{\tau}
    \\
    +&\int_{s}^{t}\int_{\Omega}\varphi_{\alpha}\partial_{t}\tilde{\varphi}
    -(v_{\alpha}\cdot\nabla\varphi_{\alpha})\tilde{\varphi}
    -m(\varphi_{\alpha})\nabla \mu_{\alpha}:\nabla\tilde{\varphi}\dd{x}\dd{\tau}
        \\
    -&\int_{s}^{t}\int_{\Omega}\mu_{\alpha}\tilde{\mu}
    -(-\Delta\varphi_{\alpha}
    +W_{\mathrm{dw}}^{\prime}(\varphi_{\alpha})
    +\beta_{\alpha})\tilde{\mu}\dd{x}\dd{\tau}
        \\
    \leq&\int_{s}^{t}\mathcal{K}(\tilde{v},\tilde{S},\tilde{\varphi},\tilde{\mu})\left(E_{\alpha}(\tau)-\mathcal{E}_{\alpha}(v_{\alpha},S_{\alpha},\varphi_{\alpha})\right)\dd{\tau}+\int_{s}^{t}\langle f,v_{\alpha}-\tilde{v}\rangle_{H^{1}}\dd{\tau}
\end{aligned}
\end{equation}
where
\begin{equation}
    \mathcal{E}_{\alpha}(v,S,\varphi):=\int_{\Omega}
    \frac{\rho(\varphi)}{2}\abs{v}^{2}
    +\frac{1}{2}\abs{S}^{2}
    +\frac{1}{2}\abs{\nabla\varphi}^{2}
    +W_{\mathrm{dw}}(\varphi)
    +W_{\mathrm{sg},\alpha}(\varphi)\dd{x}\,,
\end{equation}
as well as
\begin{equation}
\label{EVS-alpha-beta} 
    \beta_{\alpha}= W_{\mathrm{sg},\alpha}^{\prime}(\varphi_{\alpha}).
\end{equation}  
\end{subequations}
To pass to the limit $\alpha\searrow0$ in \eqref{EVS-alpha} we can mainly repeat the arguments of Theorem~\ref{exitence evs non-regulairzed system} and~\cite[Theorem 5.1]{arxiv:2509.25508}. Only for the term $\beta_{\alpha}$ we have to pay special attention and need argue differently, due to the fact  that the derivative of the logarithmic potential blows up at $\pm(1+\alpha)$. To overcome this difficulty, in view of \eqref{EVS-alpha-beta}, we would like to employ weak-strong closedness of subdifferentials under Mosco-convergence.  For this, we introduce for any $T^{\prime}\in(0,T)$ and $\alpha>0$ the following time-integrated functionals
\begin{equation}\label{space time energy: singular}
\begin{aligned}
\mathcal{E}_{\mathrm{sg},\alpha}(\varphi)&:=
    \begin{cases}
        \int_{0}^{T^{\prime}}\int_{\Omega}W_{\mathrm{sg},\alpha}(\varphi)\dd{x}\dd{\tau}&\text{ for }\varphi\in\dom{\mathcal{E}_{\mathrm{sg},\alpha}},
        \\
        +\infty&\text{ otherwise},
    \end{cases}\\
    \text{where }\;\dom{\mathcal{E}_{\mathrm{sg},\alpha}}&:=\left\{ \varphi\in L^{2}(0,T^{\prime};L^{2}(\Omega)):\abs{\varphi}\leq1+\alpha\text{ a.e. in }\Omega\times(0,T^{\prime}) \right\},
\end{aligned}
\end{equation}
as well as
\begin{equation}\label{space time energy: indicator}
\begin{aligned}
        \mathcal{I}_{K}(\varphi)&:=\begin{cases}
        0&\text{ for }\varphi\in K,\\
        \infty&\text{ for }\varphi\notin K,
    \end{cases}\\
 \text{where }\;   K&:=\left\{\varphi\in L^{2}(0,T^{\prime};L^{2}(\Omega)):\abs{\varphi}\leq1\text{ a.e. in }\Omega\times(0,T^{\prime})\right\}.
\end{aligned}    
\end{equation}
With the next lemma we verify that Mosco-convergence of the functionals $(\mathcal{E}_{\mathrm{sg},\alpha})_\alpha$ to $\mathcal{I}_K$. 
\begin{lem}\label{M-conv:singular potential}
Let $\mathcal{E}_{\mathrm{sg},\alpha}$ and $\mathcal{I}_{K}$ be given as~\eqref{space time energy: singular} and~\eqref{space time energy: indicator}.
The functional $\mathcal{E}_{\mathrm{sg},\alpha}$ converges to $\mathcal{I}_{K}$ in the sense of Mosco in $L^{2}(0,T^{\prime};L^{2}(\Omega))$, i.e., $\mathcal{E}_{\mathrm{sg},\alpha} \xrightarrow{\mathrm{M}} \mathcal{I}_{K}$ in $L^{2}(0,T^{\prime};L^{2}(\Omega))$. More precisely, the following two properties hold:
\begin{subequations}
\begin{enumerate}
\item[\textup{(M1)}]{liminf-inequality:} For every sequence $(\varphi_\alpha)_\alpha$ with  $\varphi_{\alpha} \rightharpoonup \varphi$ in $L^{2}(0,T^{\prime};L^{2}(\Omega))$, there holds:
\begin{equation}\label{Mosco: liminf}
    \liminf_{\alpha \searrow 0} \mathcal{E}_{\mathrm{sg},\alpha}(\varphi_{\alpha}) \geq \mathcal{I}_{K}(\varphi).    
\end{equation}
\item[\textup{(M2)}]{Strong convergence of recovery sequences:} For every $\varphi\in L^{2}(0,T^{\prime};L^{2}(\Omega))$ there exists a sequence $(\varphi_{\alpha})_{\alpha}\subseteq L^{2}(0,T^{\prime};L^{2}(\Omega))$ s.t. $\varphi_{\alpha} \to \varphi$ in $L^{2}(0,T^{\prime};L^{2}(\Omega))$ and
\begin{equation}\label{Mosco: limsup}
    \limsup_{\alpha \searrow 0}\mathcal{E}_{\mathrm{sg},\alpha}(\varphi_{\alpha}) \leq \mathcal{I}_{K}(\varphi).
\end{equation}
\end{enumerate}
\end{subequations}
\end{lem}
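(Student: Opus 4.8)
The plan is to verify the two properties (M1) and (M2) separately, using only two elementary pointwise facts about $W_{\mathrm{sg},\alpha}$. Writing $f(t):=t\ln t$, which is convex on $(0,\infty)$ with tangent line $t\mapsto t-1$ at $t=1$, one gets $f(1+\alpha+\varphi)+f(1+\alpha-\varphi)\ge(\alpha+\varphi)+(\alpha-\varphi)=2\alpha$ for every $\varphi$ in the domain of $W_{\mathrm{sg},\alpha}$, hence $W_{\mathrm{sg},\alpha}(\varphi)\ge 2\alpha^{2}\ge0$, so that $\mathcal{E}_{\mathrm{sg},\alpha}\ge0$ on all of $L^{2}(0,T^{\prime};L^{2}(\Omega))$; in particular no normalization of the potential is needed. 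Second, if $\abs{\varphi}\le1$ then the arguments of both logarithms lie in $[\alpha,2+\alpha]\subset(0,3)$ for $\alpha\in(0,1)$, on which $f$ is bounded, whence $0\le W_{\mathrm{sg},\alpha}(\varphi)\le C\alpha$ with a constant $C>0$ independent of $\alpha$; integrating in space--time gives the uniform estimate $0\le\mathcal{E}_{\mathrm{sg},\alpha}(\varphi)\le C\alpha\,T^{\prime}\abs{\Omega}$ for all $\varphi\in K$.

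For (M2), if $\varphi\notin K$ then $\mathcal{I}_{K}(\varphi)=+\infty$ and the inequality is vacuous, so it suffices to treat $\varphi\in K$. In that case I take the constant recovery sequence $\varphi_{\alpha}\equiv\varphi$, which trivially converges strongly in $L^{2}(0,T^{\prime};L^{2}(\Omega))$; since $\abs{\varphi}\le1\le1+\alpha$, it belongs to $\dom{\mathcal{E}_{\mathrm{sg},\alpha}}$, and the uniform bound above gives $\limsup_{\alpha\searrow0}\mathcal{E}_{\mathrm{sg},\alpha}(\varphi)\le\lim_{\alpha\searrow0}C\alpha\,T^{\prime}\abs{\Omega}=0=\mathcal{I}_{K}(\varphi)$. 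The point to emphasize is that no mollification is required here precisely because the logarithmic contribution collapses uniformly on $[-1,1]$ as $\alpha\searrow0$.

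For (M1), let $\varphi_{\alpha}\rightharpoonup\varphi$ in $L^{2}(0,T^{\prime};L^{2}(\Omega))$. If $\liminf_{\alpha\searrow0}\mathcal{E}_{\mathrm{sg},\alpha}(\varphi_{\alpha})=+\infty$ there is nothing to prove, so assume this $\liminf$ equals some $L<\infty$ and pass to a (not relabelled) subsequence along which $\mathcal{E}_{\mathrm{sg},\alpha}(\varphi_{\alpha})\to L$. Then $\mathcal{E}_{\mathrm{sg},\alpha}(\varphi_{\alpha})<\infty$ for all small $\alpha$, hence $\varphi_{\alpha}\in\dom{\mathcal{E}_{\mathrm{sg},\alpha}}$, i.e.\ $\abs{\varphi_{\alpha}}\le1+\alpha$ a.e.\ in $\Omega\times(0,T^{\prime})$. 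Now fix $\delta>0$. For every $\alpha<\delta$ the function $\varphi_{\alpha}$ lies in the set $K_{\delta}:=\{\psi\in L^{2}(0,T^{\prime};L^{2}(\Omega)):\abs{\psi}\le1+\delta\text{ a.e.}\}$, which is convex and strongly closed, hence weakly closed; therefore the weak limit $\varphi$ belongs to $K_{\delta}$, i.e.\ $\abs{\varphi}\le1+\delta$ a.e. Letting $\delta\searrow0$ yields $\abs{\varphi}\le1$ a.e., so $\varphi\in K$ and $\mathcal{I}_{K}(\varphi)=0$. Finally $\mathcal{E}_{\mathrm{sg},\alpha}\ge0$ forces $L\ge0=\mathcal{I}_{K}(\varphi)$, which is the desired liminf-inequality; together with (M2) this gives $\mathcal{E}_{\mathrm{sg},\alpha}\xrightarrow{\mathrm{M}}\mathcal{I}_{K}$.

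The argument is essentially elementary, and no single step is a serious obstacle; the only things that require a little care are getting the sign of $W_{\mathrm{sg},\alpha}$ right (handled by the tangent-line estimate) and observing that the recovery sequence may be taken constant. The reason this lemma is the right statement to isolate is that it provides exactly the input needed for the limit passage $\alpha\searrow0$ in \eqref{EVS-alpha}: combined with the weak--strong closedness of subdifferentials under Mosco-convergence, it lets one pass to the limit in the inclusion $\beta_{\alpha}=W_{\mathrm{sg},\alpha}^{\prime}(\varphi_{\alpha})\in\partial\mathcal{E}_{\mathrm{sg},\alpha}(\varphi_{\alpha})$ and identify $\beta\in\partial\mathcal{I}_{K}(\varphi)=\partial I_{[-1,1]}(\varphi)$, so I would state and use it in that form.
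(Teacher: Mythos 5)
Your proof is correct and follows essentially the same strategy as the paper: a constant recovery sequence for (M2), and weak closedness of a convex constraint set for (M1). Two minor variants are worth noting. For (M1), the paper passes to the truncation $\tilde\varphi_\alpha:=\max\{-1,\min\{1,\varphi_\alpha\}\}$, shows $\lVert\tilde\varphi_\alpha-\varphi_\alpha\rVert_{L^2}\le C\alpha\to0$ so that $\tilde\varphi_\alpha\rightharpoonup\varphi$ with $\tilde\varphi_\alpha\in K$, and then invokes weak closedness of $K$; you instead observe that $\varphi_\alpha\in K_\delta$ for all $\alpha<\delta$ and let $\delta\searrow0$, which is marginally cleaner since it avoids modifying the sequence. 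More substantively, you explicitly establish $W_{\mathrm{sg},\alpha}\ge0$ via the tangent-line inequality $t\ln t\ge t-1$, whereas the paper silently uses $\mathcal{E}_{\mathrm{sg},\alpha}\ge0$ without justification (and the sign is not obvious since $t\ln t<0$ on $(0,1)$); making that step explicit is a genuine improvement in rigor, even if the mathematical content is elementary.
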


\begin{proof}
    To show (M1), consider a sequence 
    $\varphi_{\alpha} \rightharpoonup \varphi$ in $L^{2}(0,T^{\prime};L^{2}(\Omega))$. If $\liminf_{\alpha \searrow 0} \mathcal{E}_{\mathrm{sg},\alpha}(\varphi_{\alpha})=+\infty$, there is nothing to show. Hence, we assume without loss of generality that $\sup_{\alpha}\mathcal{E}_{\mathrm{sg},\alpha}(\varphi_{\alpha})<\infty$. By definition of $\mathcal{E}_{\mathrm{sg},\alpha}$, we have $\abs{\varphi_{\alpha}}\leq1+\alpha$ a.e.\ in $\Omega\times(0,T^{\prime})$ for each $\alpha$. Now, we define
    \begin{equation*}
        \tilde{\varphi}_{\alpha}:=\max\{-1,\min\{1,\varphi_{\alpha}\}\}.
    \end{equation*}
    Notice that $\tilde{\varphi}_{\alpha}\in K$ and 
    \begin{equation*}
        \abs{\tilde{\varphi}_{\alpha}-\varphi_{\alpha}}\leq\alpha\text{ a.e. in }\Omega\times(0,T^{\prime}).
    \end{equation*}
    Hence, we derive
    \begin{equation*}
        \lVert \tilde{\varphi}_{\alpha}-\varphi_{\alpha}\rVert_{L^{2}(0,T^{\prime};L^{2}(\Omega))}\leq C\alpha\to0\quad\text{as }\alpha\to0.
    \end{equation*}
    Since $\varphi_{\alpha}\rightharpoonup\varphi$ in $L^{2}(0,T^{\prime};L^{2}(\Omega))$, we also have that  $\tilde{\varphi}_{\alpha}\rightharpoonup\varphi$ in $L^{2}(0,T^{\prime};L^{2}(\Omega))$. Further observe the set $K$ is closed and convex in $L^{2}(0,T^{\prime};L^{2}(\Omega))$, and, therefore, weakly closed. Hence, $\tilde{\varphi}_{\alpha}\in K$ for all $\alpha$ implies $\varphi\in K$. Consequently, we obtain
    \begin{equation*}
        \liminf_{\alpha \searrow 0} \mathcal{E}_{\mathrm{sg},\alpha}(\varphi_{\alpha}) \geq 0=\mathcal{I}_{K}(\varphi).
    \end{equation*}
    To show (M2), let $\varphi\in L^{2}(0,T^{\prime};L^{2}(\Omega))$.  In fact, it is possible to take the constant sequence $\varphi_{\alpha}\equiv\varphi$ as a recovery sequence: If  $\varphi\in K$ there holds $\abs{\varphi}\leq1$ a.e.\ in $\Omega\times(0,T^{\prime})$, and then we have
    \begin{equation*}
    \begin{aligned}
        \mathcal{I}_{K}(\varphi)=0\leq \mathcal{E}_{\mathrm{sg},\alpha}(\varphi)&=\alpha\int_{0}^{T^{\prime}}\int_{\Omega}(1+\alpha+\varphi)\ln{(1+\alpha+\varphi)}+(1+\alpha-\varphi)\ln{(1+\alpha-\varphi})\dd{x}\dd{\tau}
        \\
        &\leq \alpha((\alpha+2)\ln(\alpha+2)+\alpha\ln(\alpha))\to0=\mathcal{I}_{K}(\varphi)\,,
    \end{aligned}
    \end{equation*}
    If $\varphi\notin K$, there exists a constant $c>0$ such that  $\mathcal{L}^4(\{\abs{\varphi}\geq1+c\})>0$ with $\mathcal{L}^4$ the Legesgue-measure in $\Omega\times[0,T']$. Hence, we have 
    \begin{equation*}
        \mathcal{E}_{\mathrm{sg},\alpha}(\varphi)=+\infty=\mathcal{I}_{K}(\varphi)
    \end{equation*}
    for all $\alpha<c,$ so that \eqref{Mosco: limsup} is satisfied. This finishes the proof.
\end{proof}

\begin{corollary}\label{M-conv:subdifferential}
    Suppose $\beta_{\alpha} \rightharpoonup \beta$ in $L^{2}(0,T^{\prime};L^{2}(\Omega))$ and $\varphi_{\alpha} \to \varphi$ in $L^{2}(0,T^{\prime};L^{2}(\Omega))$ with
\begin{equation*}
    \beta_{\alpha}\in \partial \mathcal{E}_{\mathrm{sg},\alpha}(\varphi_{\alpha})\text{ for all }\alpha,    
\end{equation*}
    then
\begin{equation*}
    \beta\in\partial\mathcal{I}_{K}(\varphi).
\end{equation*}
\end{corollary}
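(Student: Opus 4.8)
The plan is to run the classical argument showing that Mosco-convergence of a family of proper, convex, lower semicontinuous functionals entails Kuratowski convergence of the associated subdifferential graphs, carried out here directly from properties (M1) and (M2) of Lemma~\ref{M-conv:singular potential}. Since each $\mathcal{E}_{\mathrm{sg},\alpha}$ is proper, convex and lower semicontinuous on $L^{2}(0,T^{\prime};L^{2}(\Omega))$ — the logarithmic integrand is convex and the domain constraint $|\varphi|\le 1+\alpha$ is convex and closed — the assumption $\beta_{\alpha}\in\partial\mathcal{E}_{\mathrm{sg},\alpha}(\varphi_{\alpha})$ is equivalent to the subgradient inequality
\[
\mathcal{E}_{\mathrm{sg},\alpha}(\psi)\ \ge\ \mathcal{E}_{\mathrm{sg},\alpha}(\varphi_{\alpha})+\langle\beta_{\alpha},\psi-\varphi_{\alpha}\rangle_{L^{2}(0,T^{\prime};L^{2}(\Omega))}\qquad\text{for all }\psi\in L^{2}(0,T^{\prime};L^{2}(\Omega))\,,
\]
which in particular forces $\mathcal{E}_{\mathrm{sg},\alpha}(\varphi_{\alpha})<\infty$, i.e.\ $|\varphi_{\alpha}|\le 1+\alpha$ a.e.\ in $\Omega\times(0,T^{\prime})$.

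First I would fix an arbitrary competitor $\psi\in L^{2}(0,T^{\prime};L^{2}(\Omega))$ and use (M2) to pick a recovery sequence $\psi_{\alpha}\to\psi$ strongly in $L^{2}(0,T^{\prime};L^{2}(\Omega))$ with $\limsup_{\alpha\searrow0}\mathcal{E}_{\mathrm{sg},\alpha}(\psi_{\alpha})\le\mathcal{I}_{K}(\psi)$. Inserting $\psi_{\alpha}$ for $\psi$ in the subgradient inequality gives
\[
\mathcal{E}_{\mathrm{sg},\alpha}(\psi_{\alpha})\ \ge\ \mathcal{E}_{\mathrm{sg},\alpha}(\varphi_{\alpha})+\langle\beta_{\alpha},\psi_{\alpha}-\varphi_{\alpha}\rangle_{L^{2}(0,T^{\prime};L^{2}(\Omega))}\,.
\]
Now I pass to the limit $\alpha\searrow0$: on the left-hand side I use the $\limsup$-bound from (M2); on the right-hand side I apply the liminf-inequality (M1) to $\mathcal{E}_{\mathrm{sg},\alpha}(\varphi_{\alpha})$, using that $\varphi_{\alpha}\to\varphi$ strongly, hence also weakly, and I note that $\beta_{\alpha}\rightharpoonup\beta$ weakly together with $\psi_{\alpha}-\varphi_{\alpha}\to\psi-\varphi$ strongly implies $\langle\beta_{\alpha},\psi_{\alpha}-\varphi_{\alpha}\rangle\to\langle\beta,\psi-\varphi\rangle$. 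Combining these via $\liminf(a_{\alpha}+b_{\alpha})\ge\liminf a_{\alpha}+\lim b_{\alpha}$ when $(b_{\alpha})_{\alpha}$ converges yields
\[
\mathcal{I}_{K}(\psi)\ \ge\ \mathcal{I}_{K}(\varphi)+\langle\beta,\psi-\varphi\rangle_{L^{2}(0,T^{\prime};L^{2}(\Omega))}\qquad\text{for all }\psi\in L^{2}(0,T^{\prime};L^{2}(\Omega))\,.
\]
Choosing here $\psi=0\in K$ shows $\mathcal{I}_{K}(\varphi)\le-\langle\beta,\varphi\rangle<\infty$, so $\varphi\in K=\dom{\mathcal{I}_{K}}$ and hence $\mathcal{I}_{K}(\varphi)=0$, and the displayed inequality is then precisely the definition of $\beta\in\partial\mathcal{I}_{K}(\varphi)$.

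I do not expect any serious obstacle; this is essentially a direct corollary of Lemma~\ref{M-conv:singular potential}. The few points needing a little care are: the weak-times-strong convergence of the duality pairing in the Hilbert space $L^{2}(0,T^{\prime};L^{2}(\Omega))$; the correct $\liminf$/$\limsup$ bookkeeping on the right-hand side of the tested subgradient inequality; and checking that the subdifferential of $\mathcal{I}_{K}$ is understood in the same $L^{2}(0,T^{\prime};L^{2}(\Omega))$-duality as that of $\mathcal{E}_{\mathrm{sg},\alpha}$, so that the limiting variational inequality really characterises $\partial\mathcal{I}_{K}(\varphi)$. None of these is delicate.
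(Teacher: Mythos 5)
Your argument is correct, but it takes a genuinely different route from the paper. The paper goes through Legendre--Fenchel conjugation: it invokes a theorem (cited as \cite{zbMATH03399971}) guaranteeing that Mosco convergence $\mathcal{E}_{\mathrm{sg},\alpha}\xrightarrow{\mathrm{M}}\mathcal{I}_K$ implies Mosco convergence of the conjugates $\mathcal{E}_{\mathrm{sg},\alpha}^{*}\xrightarrow{\mathrm{M}}\mathcal{I}_K^{*}$, writes the subdifferential condition via the Fenchel equality $\langle\beta_\alpha,\varphi_\alpha\rangle=\mathcal{E}_{\mathrm{sg},\alpha}(\varphi_\alpha)+\mathcal{E}_{\mathrm{sg},\alpha}^{*}(\beta_\alpha)$, passes to the $\liminf$ on \emph{both} the primal and the dual functional using (M1) and its dual counterpart, and then uses weak--strong convergence of the pairing to land on $\langle\beta,\varphi\rangle\geq\mathcal{I}_K(\varphi)+\mathcal{I}_K^{*}(\beta)$, which by Fenchel equivalence characterises $\beta\in\partial\mathcal{I}_K(\varphi)$. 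You instead work directly on the subgradient inequality at a fixed competitor $\psi$, use the (M2) recovery sequence $\psi_\alpha\to\psi$ on the left, the $\liminf$-inequality (M1) at $\varphi_\alpha$ together with weak--strong convergence of $\langle\beta_\alpha,\psi_\alpha-\varphi_\alpha\rangle$ on the right, and pass to the limit termwise. This avoids any reference to conjugate functionals or the duality-stability theorem, and is thus more elementary and self-contained; the paper's route is shorter once that black-box theorem is available. Both are standard demonstrations that Mosco convergence yields graph convergence of subdifferentials, and both deliver exactly the stated conclusion.

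One tiny slip: with $\psi=0$ the estimate reads $0\geq\mathcal{I}_K(\varphi)+\langle\beta,0-\varphi\rangle$, i.e.\ $\mathcal{I}_K(\varphi)\leq\langle\beta,\varphi\rangle$, not $\leq-\langle\beta,\varphi\rangle$; the sign is harmless since you only use finiteness of the right-hand side to force $\varphi\in K$ and $\mathcal{I}_K(\varphi)=0$. (Alternatively this is immediate from $|\varphi_\alpha|\leq 1+\alpha$ a.e.\ and the strong $L^2$ convergence, which already gives $|\varphi|\leq1$ a.e.)
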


\begin{proof}
    To show the corollary, we consider the Legendre-Fenchel conjugates
    \begin{subequations}
    \begin{align}
        \mathcal{E}_{\mathrm{sg},\alpha}^{*}(\beta_{\alpha})
        :=&
        \sup\left\{
        \langle \beta_{\alpha},\varphi_{\alpha} \rangle_{L^{2}(0,T^{\prime};L^{2}(\Omega))}-\mathcal{E}_{\mathrm{sg},\alpha}(\varphi_{\alpha})
        :\varphi_{\alpha}\in L^{2}(0,T^{\prime};L^{2}(\Omega)) 
        \right\}\,,
        \\
        \mathcal{I}_{K}^{*}(\beta)
        :=&
        \sup\left\{
        \langle \beta,\varphi \rangle_{L^{2}(0,T^{\prime};L^{2}(\Omega))}
        -\mathcal{I}_{K}(\varphi)
        :\varphi\in L^{2}(0,T^{\prime};L^{2}(\Omega))
        \right\}
        \,.
    \end{align}
    \end{subequations}
    From~\cite[Theorem 1]{zbMATH03399971}, we have
    \begin{equation}
        \mathcal{E}_{\mathrm{sg},\alpha}^{*}\xrightarrow{\mathrm{M}} \mathcal{I}_{K}^{*}.
    \end{equation}
    By Fenchel equivalence, cf.~\cite{zbMATH03058046}, we have
    \begin{equation}\label{Fenchel equivalence}
        \langle \beta_{\alpha},\varphi_{\alpha}\rangle_{L^{2}(0,T^{\prime};L^{2}(\Omega))}
        \geq
        \mathcal{E}_{\mathrm{sg},\alpha}(\varphi_{\alpha})
        +\mathcal{E}_{\mathrm{sg},\alpha}^{*}(\beta_{\alpha}).
    \end{equation}
    \begin{subequations}
    From (M1), we have
    \begin{align}
        \liminf_{\alpha \searrow 0} \mathcal{E}_{\mathrm{sg},\alpha}(\varphi_{\alpha}) 
        &\geq \mathcal{I}_{K}(\varphi),
        \label{Mosco liminf: origin}\\
        \liminf_{\alpha \searrow 0} \mathcal{E}_{\mathrm{sg},\alpha}^{*}(\beta_{\alpha}) &\geq \mathcal{I}_{K}^{*}(\beta).
        \label{Mosco liminf: dual}
    \end{align}
    Moreover, since we assume that $\beta_{\alpha} \rightharpoonup \beta$ and $\varphi_{\alpha} \to \varphi$ in $L^{2}(0,T^{\prime};L^{2}(\Omega))$, it holds
    \begin{align}
        \lim_{\alpha \searrow 0}\langle \beta_{\alpha},\varphi_{\alpha}\rangle_{L^{2}(0,T^{\prime};L^{2}(\Omega))}
        =\langle \beta,\varphi\rangle_{L^{2}(0,T^{\prime};L^{2}(\Omega))}.
        \label{Mosco lim: weak-strong}
    \end{align}
    \end{subequations}
    Inserting~\eqref{Mosco liminf: origin}-\eqref{Mosco lim: weak-strong} into~\eqref{Fenchel equivalence} yields
    \begin{equation}
        \langle \beta,\varphi\rangle_{L^{2}(0,T^{\prime};L^{2}(\Omega))}
        \geq
        \mathcal{I}_{K}(\varphi)
        +\mathcal{I}_{K}^{*}(\beta)\,,
    \end{equation}
    which is equivalent to
    \begin{equation}
        \beta\in\partial\mathcal{I}_{K}(\varphi)
    \end{equation}
    by Fenchel equivalence. This finishes our proof.
\end{proof}

\begin{remark}\label{subdifferential integrand}
    With the help of~\cite[Proposition 2.53]{zbMATH05948485}, one can see that 
    \begin{equation*}
        \partial\mathcal{I}_{K}(\varphi)=\left\{ \xi\in L^{2}(0,T^{\prime};L^{2}(\Omega)):\xi(x,t)\in\partial {I}_{[-1,1]}(\varphi(x,t))\text{ for a.e. }(x,t)\in \Omega\times(0,T^{\prime})\right\}.
    \end{equation*}
\end{remark}
\begin{lem}[$\alpha$-uniform estimates]\label{est:alpha uniform}
    There is a constant $C>0$ such that the energy-variational solutions $(v_{\alpha},S_{\alpha},\varphi_{\alpha},\mu_{\alpha}, E_\alpha)_\alpha$ obtained from Theorem~\ref{exitence evs non-regulairzed system} for system \eqref{sys:two phase} with $\gamma=0$ and the phase-field potentials $(W_\alpha)_\alpha$ satisfy the following $\alpha$-uniform estimates for all $0<T^{\prime}<T$: 
    \begin{subequations}
       \begin{align}
           \|E_\alpha\|_{L^\infty(0,T')}+ \lVert v_{\alpha}\rVert_{L^{\infty}(0,T^{\prime};L^{2}(\Omega))}
                +\lVert S_{\alpha}\rVert_{L^{\infty}(0,T^{\prime};L^{2}(\Omega))}
                +\lVert \nabla\varphi_{\alpha}\rVert_{L^{\infty}(0,T^{\prime};L^{2}(\Omega))}&
                \nonumber\\
                +\lVert \sym{\nabla v_{\alpha}}\rVert_{L^{2}(0,T^{\prime};L^{2}(\Omega))}
                +\lVert \nabla\mu_{\alpha}\rVert_{L^{2}(0,T^{\prime};L^{2}(\Omega))}
                &\leq C,
                \label{est:modified total energy}\\
                \sup_{[0,T^{\prime}]}\int_{\Omega}F(\varphi_{\alpha}(t))\dd{x}
                +
                \int_{0}^{T^{\prime}}\int_{\Omega}
                \abs{\Delta\varphi_{\alpha}}^{2}
                \dd{x}\dd{t}
                &\leq C,
                \label{est:laplacian}\\
                \int_{0}^{T^{\prime}}\int_{\Omega}\abs{\beta_{\alpha}}^{2}\dd{x}\dd{t}
                &\leq C.
                \label{est:derivative of singular potential}
            \end{align}
        Above in \eqref{est:laplacian},  $F\in C^2(\mathbb{R})$  is a non-negative function with the properties 
         $F(0)=F^{\prime}(0)=0$ and $F^{\prime\prime}(s)=1/m(s)$ for all $s\in\mathbb{R}$.  
        \end{subequations}
\end{lem}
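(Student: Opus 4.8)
The plan is to derive the $\alpha$-uniform estimates \eqref{est:modified total energy}--\eqref{est:derivative of singular potential} by testing the energy-variational inequality \eqref{EVS inequality alpha} with the trivial test function and then bootstrapping through the Gibbs--Thomson relation. \textbf{First}, I would insert $\tilde v\equiv 0$, $\tilde S\equiv 0$, $\tilde\varphi\equiv 0$, $\tilde\mu\equiv 0$ into \eqref{EVS inequality alpha}. Since $\mathcal{K}$ as in \eqref{regularity weight for non-regularized} satisfies $\mathcal{K}(0)=0$ and $\mathcal{P}(\varphi_\alpha;0)=0$ by Assumption \ref{ASM:Dissipation potential}, and since $\gamma=0$, this collapses to the energy--dissipation estimate
\begin{equation*}
E_\alpha(t)+\int_0^t\!\!\int_\Omega 2\nu(\varphi_\alpha)|\sym{\nabla v_\alpha}|^2+m(\varphi_\alpha)|\nabla\mu_\alpha|^2\dd x\dd\tau\leq E_\alpha(0)+\int_0^t\langle f,v_\alpha\rangle_{H^1}\dd\tau
\end{equation*}
for a.e.\ $s=0<t$, using $E_\alpha(0)=\mathcal{E}_\alpha(v_0,S_0,\varphi_0)$. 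Because $|\varphi_0|\leq 1$, the value $\mathcal{E}_\alpha(v_0,S_0,\varphi_0)$ is bounded uniformly in $\alpha$ (the only $\alpha$-dependent term $W_{\mathrm{sg},\alpha}(\varphi_0)\le \alpha((2+\alpha)\ln(2+\alpha)+\alpha\ln\alpha)\to 0$). Then Korn's inequality, the lower bound $\nu\ge\nu_1$, $m\ge m_1$, and Young's/Gronwall's inequality applied to the $\langle f,v_\alpha\rangle$ term (using $f\in L^2_{\mathrm{loc}}(H^{-1})$) yield \eqref{est:modified total energy}; the bound on $E_\alpha$ in $L^\infty(0,T')$ and hence on $\|v_\alpha\|_{L^\infty L^2}$, $\|S_\alpha\|_{L^\infty L^2}$, $\|\nabla\varphi_\alpha\|_{L^\infty L^2}$ follows since $E_\alpha\ge\mathcal E_\alpha(v_\alpha,S_\alpha,\varphi_\alpha)$ and each energy contribution is non-negative.

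\textbf{Second}, for \eqref{est:laplacian} and \eqref{est:derivative of singular potential} I would exploit that, by Remark \ref{EVS satisfies strong TG law} (the $\gamma=0$ analogue) together with Proposition \ref{recover strong GT}, the energy-variational solution satisfies the strong Gibbs--Thomson law $\mu_\alpha=-\Delta\varphi_\alpha+W_{\mathrm{dw}}'(\varphi_\alpha)+\beta_\alpha$ a.e., with $\beta_\alpha=W_{\mathrm{sg},\alpha}'(\varphi_\alpha)$ from \eqref{EVS-alpha-beta}. The monotonicity property $W_{\mathrm{sg},\alpha}''\ge 0$ (which holds on $(-1-\alpha,1+\alpha)$, where $\varphi_\alpha$ actually lives by Theorem \ref{exitence evs non-regulairzed system}) makes $\beta_\alpha\varphi_\alpha\ge 0$ and, more usefully, $W_{\mathrm{sg},\alpha}'$ monotone. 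The standard trick is to test the Gibbs--Thomson identity with $\beta_\alpha=W_{\mathrm{sg},\alpha}'(\varphi_\alpha)$ itself (justified since $\varphi_\alpha\in L^2(H^2)$ and, by the logarithmic structure, $W_{\mathrm{sg},\alpha}'(\varphi_\alpha)\in L^2$ for each fixed $\alpha$), integrate over $\Omega\times(0,T')$, and integrate by parts:
\begin{equation*}
\int_0^{T'}\!\!\int_\Omega |\beta_\alpha|^2\dd x\dd t+\int_0^{T'}\!\!\int_\Omega W_{\mathrm{sg},\alpha}''(\varphi_\alpha)|\nabla\varphi_\alpha|^2\dd x\dd t=\int_0^{T'}\!\!\int_\Omega \big(\mu_\alpha-W_{\mathrm{dw}}'(\varphi_\alpha)\big)\beta_\alpha\dd x\dd t.
\end{equation*}
Since $W_{\mathrm{sg},\alpha}''\ge 0$ the second term on the left is non-negative and can be dropped; on the right, $\mu_\alpha$ is controlled in $L^2(H^1)$ by \eqref{est:modified total energy} (noting $\mu_\alpha-(\mu_\alpha)_\Omega$ is controlled, and the mean value must be handled separately), $W_{\mathrm{dw}}'(\varphi_\alpha)$ is bounded since $W_{\mathrm{dw}}\in C^2$ and $|\varphi_\alpha|\le 1+\alpha$; absorbing the $\beta_\alpha$ factor with Young's inequality gives \eqref{est:derivative of singular potential}. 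Feeding this back into $\|\Delta\varphi_\alpha\|_{L^2}\le\|\mu_\alpha\|_{L^2}+\|W_{\mathrm{dw}}'(\varphi_\alpha)\|_{L^2}+\|\beta_\alpha\|_{L^2}$ yields the $\Delta\varphi_\alpha$ bound in \eqref{est:laplacian}. The $\sup_{[0,T']}\int_\Omega F(\varphi_\alpha)\dd x$ bound follows from differentiating $t\mapsto\int_\Omega F(\varphi_\alpha(t))$, using $F''=1/m$ and the weak Cahn--Hilliard equation \eqref{weak formualtion: CH} tested with $F'(\varphi_\alpha)$, which gives $\frac{d}{dt}\int_\Omega F(\varphi_\alpha)+\int_\Omega\nabla\mu_\alpha\cdot\nabla\varphi_\alpha/m(\varphi_\alpha)\cdot(\ldots)$ — more directly, the quantity $\int_\Omega F(\varphi_\alpha)$ is controlled by $\|\nabla\mu_\alpha\|_{L^2L^2}\|\nabla\varphi_\alpha\|_{L^\infty L^2}$ plus the convection term, all already bounded, and $F(\varphi_0)$ is bounded since $|\varphi_0|\le 1$.

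\textbf{The main obstacle} I anticipate is making the testing of the Gibbs--Thomson relation with $\beta_\alpha$ rigorous and, in particular, controlling the \emph{mean value} $(\mu_\alpha)_\Omega$ of the chemical potential, which is not directly bounded by the energy--dissipation estimate (that only controls $\nabla\mu_\alpha$). The standard fix is to integrate the Gibbs--Thomson identity over $\Omega$ to get $(\mu_\alpha)_\Omega=\frac{1}{|\Omega|}\int_\Omega W_{\mathrm{dw}}'(\varphi_\alpha)+\beta_\alpha\dd x$ (the $-\Delta\varphi_\alpha$ term integrates to zero by the Neumann boundary condition), and then use the conservation of mass $\frac{1}{|\Omega|}\int_\Omega\varphi_\alpha=\frac{1}{|\Omega|}\int_\Omega\varphi_0\in(-1,1)$ together with the monotonicity and blow-up of $W_{\mathrm{sg},\alpha}'$ to obtain an $\alpha$-uniform $L^1$-bound on $\beta_\alpha$ (a Kenmochi--type / Miranville--Zelik argument: if $\int_\Omega\beta_\alpha$ were large, $\varphi_\alpha$ would be pushed uniformly against one obstacle, contradicting the fixed mean). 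This $L^1$-in-space bound, combined with Poincaré and the gradient bound, upgrades to the needed $L^2$-control of the full $\mu_\alpha$, closing the argument. Everything else is routine once this mean-value estimate is in place. Since all constants here depend only on the data $v_0,S_0,\varphi_0,f$ and the fixed parameters (through Korn's constant, $\nu_1$, $m_1$, $m_2$, $\|W_{\mathrm{dw}}\|_{C^2[-2,2]}$) and \emph{not} on $\alpha$, the estimates are uniform as claimed.
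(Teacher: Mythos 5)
Your proof is correct in its essentials and identifies the one genuinely non-routine point, namely the mean-value control of $\mu_\alpha$ via a Kenmochi/Miranville--Zelik type lower bound on $W_{\mathrm{sg},\alpha}'(\varphi_\alpha)(\varphi_\alpha-\varphi_{\alpha,\Omega})$, which is exactly what the paper does (with the paper's precise inequality $W_{\mathrm{sg},\alpha}'(s)(s-\varphi_{\alpha,\Omega})\ge C_1|W_{\mathrm{sg},\alpha}'(s)|-C_0$ on $(-1-\alpha,1+\alpha)$, uniform in $\alpha$ because $\varphi_{\alpha,\Omega}=\varphi_{0,\Omega}\in(-1,1)$ is conserved). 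Where you diverge is in the order and technique for \eqref{est:laplacian} and \eqref{est:derivative of singular potential}. The paper tests the weak Cahn--Hilliard evolution \eqref{weak formualtion: CH} with $F'(\varphi_\alpha)$, exploits the cancellation $m(\varphi_\alpha)\nabla F'(\varphi_\alpha)=\nabla\varphi_\alpha$ to land on $-\int\nabla\mu_\alpha\cdot\nabla\varphi_\alpha=\int\mu_\alpha\Delta\varphi_\alpha$, substitutes the Gibbs--Thomson law, and then integrates $\int W_\alpha'(\varphi_\alpha)\Delta\varphi_\alpha$ by parts to discard the sign-definite $-\int W_{\mathrm{sg},\alpha}''(\varphi_\alpha)|\nabla\varphi_\alpha|^2\le 0$; this delivers the $\sup_t\int F(\varphi_\alpha)$ bound and the $L^2L^2$ bound on $\Delta\varphi_\alpha$ simultaneously, with no need for the $\mu_\alpha$ mean-value issue at that stage. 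Only afterwards does the paper establish the $L^1$ bound on $\beta_\alpha$, the $L^2$ bound on $\mu_\alpha$ via Poincar\'e, and finally \eqref{est:derivative of singular potential} from the pointwise inequality $|\beta_\alpha|^2\le 3(|\Delta\varphi_\alpha|^2+|W_{\mathrm{dw}}'(\varphi_\alpha)|^2+|\mu_\alpha|^2)$. You instead propose to obtain $\|\beta_\alpha\|_{L^2}$ first, by pairing the Gibbs--Thomson identity with $\beta_\alpha$ itself, and only then deduce $\|\Delta\varphi_\alpha\|_{L^2}$ via the triangle inequality. That route is a valid variant, but (i) it is circular-looking unless you stress, as you did, that $\beta_\alpha\in L^2$ is qualitative regularity of the fixed-$\alpha$ solution, and (ii) the integration by parts $-\int\Delta\varphi_\alpha\,\beta_\alpha=\int W_{\mathrm{sg},\alpha}''(\varphi_\alpha)|\nabla\varphi_\alpha|^2$ requires $\beta_\alpha\in H^1(\Omega)$ for a.e.\ $t$, which is not a priori known since $W_{\mathrm{sg},\alpha}''$ blows up as $\varphi_\alpha\to\pm(1+\alpha)$; one has to regularize (Yosida-type) to make this licit. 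The paper has an analogous hidden step in $\int W_\alpha'(\varphi_\alpha)\Delta\varphi_\alpha=-\int W_\alpha''(\varphi_\alpha)|\nabla\varphi_\alpha|^2$, so neither presentation is fully explicit about it, but the paper's route via $F'$ gives $\Delta\varphi_\alpha$ without ever invoking $\|\mu_\alpha\|_{L^2}$, which is cleaner. Two small inaccuracies in your write-up: the convection term $\int v_\alpha\cdot\nabla\varphi_\alpha\,F'(\varphi_\alpha)=\int v_\alpha\cdot\nabla F(\varphi_\alpha)$ vanishes identically (solenoidal $v_\alpha$ with Dirichlet boundary data), it is not merely bounded; and Gronwall is not needed for \eqref{est:modified total energy} — Young's inequality and absorption into the dissipation already close that estimate.
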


\begin{proof}
        \eqref{est:modified total energy} follows directly by choosing $\tilde{v}\equiv0$, $\tilde{S}\equiv0$, $\tilde{\varphi}\equiv0$ and $\tilde{\mu}\equiv0$ in \eqref{EVS inequality} and by exploiting the assumptions on the material parameters and the given data. To derive \eqref{est:laplacian}, first notice that Proposition~\ref{recover weak CH} implies that $(v_{\alpha},S_{\alpha},\varphi_{\alpha},\mu_{\alpha}, E_\alpha)$ satisfies~\eqref{weak formualtion: CH} for each $\alpha>0$. Hence, with the help of~\eqref{est:modified total energy}, we have that $\partial_{t}\varphi_{\alpha}$ is bounded in $L^{2}(0,T^{\prime};H^{-1}(\Omega))$. Moreover, from Remark~\ref{EVS satisfies strong TG law}, we obtain that $(v_{\alpha},S_{\alpha},\varphi_{\alpha},\mu_{\alpha}, E_\alpha)$  satisfies~\eqref{strong TG law} with $W_\alpha$. 
        
        Now, let $F\in C^2(\mathbb{R})$ be a non-negative function satisfying  $F(0)=F^{\prime}(0)=0$ and $F^{\prime\prime}(s)=1/m(s)$ for all $s\in\mathbb{R}$. Observe that $F^{\prime}(\varphi_{\alpha})$ is non-negative and $F^{\prime}(\varphi_{\alpha})$ is an admissible test function for~\eqref{weak formualtion: CH}. Then, testing~\eqref{weak formualtion: CH} with $F^{\prime}(\varphi_{\alpha})$ and integration by parts in time gives 
        \begin{equation}\label{tested by F}
            \begin{aligned}
            &\int_{0}^{T^{\prime}}
            \langle \partial_{t}\varphi_{\alpha},F^{\prime}(\varphi_{\alpha})\rangle_{H^{1}}
            \dd{\tau}
            +\int_{0}^{T^{\prime}}\int_{\Omega}
            v_{\alpha}\cdot\nabla\varphi_{\alpha}F^{\prime}(\varphi_{\alpha})
            \dd{x}\dd{\tau}
            \\=&-\int_{0}^{T^{\prime}}\int_{\Omega}m(\varphi_{\alpha})\nabla\mu_{\alpha}\cdot\nabla F^{\prime}(\varphi_{\alpha})\dd{x}\dd{\tau}
            \\
            =&-\int_{0}^{T^{\prime}}\int_{\Omega}
            \nabla \mu_{\alpha}\cdot\nabla\varphi_{\alpha}
            \dd{x}\dd{\tau}
            \\
            =&\int_{0}^{T^{\prime}}\int_{\Omega}
            W_{\alpha}^{\prime}(\varphi_{\alpha})\Delta\varphi_{\alpha}
            -\abs{\Delta\varphi_{\alpha}}^{2}
            \dd{x}\dd{\tau}\,,
        \end{aligned}
        \end{equation}
     where we also used \eqref{strong TG law} to arrive at the last line.
        Observe that the left-hand side of~\eqref{tested by F} gives
        \begin{align}
            \int_{0}^{T^{\prime}}
            \langle \partial_{t}\varphi_{\alpha},F^{\prime}(\varphi_{\alpha})\rangle_{H^{1}}
            \dd{\tau}
            &=\int_{\Omega}F(\varphi_{\alpha}(T^{\prime}))\dd{x}
            -\int_{\Omega}F(\varphi_{0})\dd{x},
            \\
            \int_{0}^{T^{\prime}}\int_{\Omega}
            v_{\alpha}\cdot\nabla\varphi_{\alpha}F^{\prime}(\varphi_{\alpha})
            \dd{x}
            &=\int_{0}^{T^{\prime}}\int_{\Omega}
            v_{\alpha}\cdot\nabla F(\varphi_{\alpha})
            \dd{x}
            =0.
        \end{align}
        Moreover, we estimate the first term on the right-hand side of~\eqref{tested by F} as  
        \begin{equation}
        \begin{aligned}
            \int_{0}^{T^{\prime}}\int_{\Omega}
            W_{\alpha}^{\prime}(\varphi_{\alpha})\Delta\varphi_{\alpha}
            \dd{x}\dd{\tau}
            &=\int_{0}^{T^{\prime}}\int_{\Omega}
                W_{\mathrm{dw}}^{\prime}(\varphi_{\alpha})\Delta\varphi_{\alpha}
            \dd{x}\dd{\tau}
            +\int_{0}^{T^{\prime}}\int_{\Omega}
                W_{\mathrm{sg},\alpha}^{\prime}(\varphi_{\alpha})\Delta\varphi_{\alpha}
            \dd{x}\dd{\tau}
            \\
            &=-\int_{0}^{T^{\prime}}\int_{\Omega}
                W_{\mathrm{dw}}^{\prime\prime}(\varphi_{\alpha})\abs{\nabla\varphi_{\alpha}}^{2}
            \dd{x}\dd{\tau}
            -\int_{0}^{T^{\prime}}\int_{\Omega}
                W_{\mathrm{sg},\alpha}^{\prime\prime}(\varphi_{\alpha})\abs{\nabla\varphi_{\alpha}}^{2}
            \dd{x}\dd{\tau}
            \\
            &\leq-\int_{0}^{T^{\prime}}\int_{\Omega}
                W_{\mathrm{dw}}^{\prime\prime}(\varphi_{\alpha})\abs{\nabla\varphi_{\alpha}}^{2}
            \dd{x}\dd{\tau}
            \leq \int_{0}^{T^{\prime}}\int_{\Omega}
                |W_{\mathrm{dw}}^{\prime\prime}(\varphi_{\alpha})|\abs{\nabla\varphi_{\alpha}}^{2}
            \dd{x}\dd{\tau}.
        \end{aligned}
        \end{equation}
        Therefore, we obtain
        \begin{equation}\label{est with F}
            \begin{aligned}
            \int_{\Omega}F(\varphi_{\alpha}(T^{\prime}))\dd{x}
            +\int_{0}^{T^{\prime}}\int_{\Omega}\abs{\Delta\varphi_{\alpha}}^{2}\dd{x}\dd{\tau}
            \leq C\left( 
            \int_{\Omega}F(\varphi_{0})\dd{x}
            +\int_{0}^{T^{\prime}}\int_{\Omega}\abs{W_{\mathrm{dw}}^{\prime\prime}(\varphi_{\alpha})}\abs{\nabla\varphi_{\alpha}}^{2}\dd{x}\dd{\tau}
            \right)
        \end{aligned}
        \end{equation}
        which implies \eqref{est:laplacian}.  
        
        Now, to derive \eqref{est:derivative of singular potential}, we test~\eqref{strong TG law}, where $W=W_\alpha=W_\mathrm{dw}+W_{\mathrm{sg},\alpha},$  with the test function  $\varphi_{\alpha}-\varphi_{\alpha,\Omega},$ where $\varphi_{\alpha,\Omega}$ as in \eqref{mean value in Omega}, and integrate over $\Omega$ to get
        \begin{equation}
            \begin{aligned}
            \int_{\Omega}\mu_{\alpha}(\varphi_{\alpha}-\varphi_{\alpha,\Omega})\dd{x}
            =&
            \int_{\Omega}W_{\mathrm{dw}}^{\prime}
            (\varphi_{\alpha})(\varphi_{\alpha}-\varphi_{\alpha,\Omega})
            \dd{x}
            \nonumber\\
            &+\int_{\Omega}
            W_{\mathrm{sg},\alpha}^{\prime}(\varphi_{\alpha})(\varphi_{\alpha}-\varphi_{\alpha,\Omega})
            \dd{x}
            -\int_{\Omega}\Delta\varphi_{\alpha}(\varphi_{\alpha}-\varphi_{\alpha,\Omega})\dd{x}\,.
        \end{aligned}
        \end{equation}
        Above relation holds true for a.e.\ $t\in(0,T^{\prime})$. Since $\lim_{s\to\pm(1+\alpha)}W_{\mathrm{sg},\alpha}^{\prime}(s)=\pm\infty$, we show that 
        \begin{equation}
            W_{\mathrm{sg},\alpha}^{\prime}(s)(s-\varphi_{\alpha,\Omega})\geq C_{1}|W_{\mathrm{sg},\alpha}^{\prime}(s)|-C_{0}   
        \end{equation}
        for all $s\in(-1-\alpha,1+\alpha)$, using the fact that $\varphi_{\alpha,\Omega}\in(-1-\alpha+c,1+\alpha-c)$ for some small $c>0$, see also~\cite[Lemma 4.2]{zbMATH06210388}. Hence, we derive
        \begin{equation*}
            \int_{\Omega}\abs{W_{\mathrm{sg},\alpha}^{\prime}(\varphi_{\alpha})}\dd{x}
            \leq C\left(\int_{\Omega}W_{\mathrm{sg},\alpha}^{\prime}(\varphi_{\alpha})(\varphi_{\alpha}-\varphi_{\alpha,\omega})\dd{x}+1\right)
        \end{equation*}
        for a.e.\ $t\in(0,T^{\prime})$. Moreover, we have
        \begin{align*}
            \abs{\int_{\Omega}\mu_{\alpha}(\varphi_{\alpha}-\varphi_{\alpha,\Omega})\dd{x}}
            &=\abs{\int_{\Omega}(\mu_{\alpha}-\mu_{\alpha,\Omega})\varphi_{\alpha}\dd{x}}
            \leq\int_{\Omega}\abs{\nabla\mu_{\alpha}}^{2}\dd{x},
            \\
            \abs{\int_{\Omega}\Delta\varphi_{\alpha}(\varphi_{\alpha}-\varphi_{\alpha,\Omega})\dd{x}}
            &\leq\int_{\Omega}\abs{\nabla\varphi_{\alpha}}^{2}\dd{x}
        \end{align*}
        for a.e.\ $t\in(0,T^{\prime})$, with the help of Poincar\'e inequality, see~\cite[Chapter 5.8.1]{zbMATH05681750} for details. Therefore, we obtain
        \begin{equation}
            \begin{aligned}
            \int_{\Omega}\abs{W_{\mathrm{sg},\alpha}^{\prime}(\varphi_{\alpha})}\dd{x}
            &\leq
            C\left(
            \int_{\Omega}\abs{\nabla\mu_{\alpha}}^{2}\dd{x}
            +\int_{\Omega}\abs{\nabla\varphi_{\alpha}}^{2}\dd{x}
            +\int_{\Omega}\abs{W_{\mathrm{dw}}^{\prime}(\varphi_{\alpha})(\varphi_{\alpha}-\varphi_{\alpha,\Omega})}\dd{x}
            +1\right)
            \nonumber\\
            &\leq C\left(\int_{\Omega}\abs{\nabla\mu_{\alpha}}^{2}\dd{x}
            +1\right)
        \end{aligned}
        \end{equation}
        for a.e.\ $t\in(0,T^{\prime})$, using that $\abs{\varphi_{\alpha}}<(1+\alpha)$. Additionally, integrating \eqref{strong TG law} over $\Omega$ implies
        \begin{equation*}
        \begin{aligned}
            \abs{\int_{\Omega}\mu_{\alpha}\dd{x}}
            &\leq
            \int_{\Omega}\abs{W_{\mathrm{dw}}^{\prime}(\varphi_{\alpha})}\dd{x}
            +\int_{\Omega}\abs{W_{\mathrm{sg},\alpha}^{\prime}(\varphi_{\alpha})}\dd{x}
            +\abs{\int_{\Omega}\Delta\varphi_{\alpha}\dd{x}}
            \\
            &\leq C\left(\int_{\Omega}\abs{\nabla\mu_{\alpha}}^{2}\dd{x}+1\right)
        \end{aligned}
        \end{equation*}
        for a.e. $t\in(0,T^{\prime})$. Hence, we derive
        \begin{align}
            \int_{\Omega}\abs{\mu_{\alpha}(t)}^{2}\dd{x}
            \leq
            C\left(\int_{\Omega}\abs{\nabla\mu_{\alpha}(t)}^{2}\dd{x}+1\right)
        \end{align}
        for a.e.\ $t\in(0,T^{\prime})$, with the help of Poincar\'e inequality. Now, from~\eqref{strong TG law}, we have
        \begin{align*}
            \abs{W_{\mathrm{sg},\alpha}^{\prime}(\varphi_{\alpha})}^{2}
            \leq 
            \abs{\Delta\varphi_{\alpha}}^{2}
            +\abs{W_{\mathrm{dw}}^{\prime}(\varphi_{\alpha})}^{2}
            +\abs{\mu_{\alpha}}^{2}.
        \end{align*}
        Integrating over space and time yields
        \begin{align}
            \int_{0}^{T^{\prime}}\int_{\Omega}\abs{W_{\mathrm{sg},\alpha}^{\prime}(\varphi_{\alpha})}^{2}\dd{x}\dd{\tau}
            \leq C\left( \int_{0}^{T^{\prime}}\int_{\Omega}\abs{\nabla\mu_{\alpha}}^{2}\dd{x}\dd{\tau}+1\right).
        \end{align}
       Therefore, we arrive at~\eqref{est:derivative of singular potential} with the help of~\eqref{est:modified total energy}.
\end{proof}
\subsection{Limit passage $\alpha\searrow 0$ in the setting of energy-variational solutions}\label{sec: alpha limit evs} 
In this Section, we perform the limit passage $\alpha \searrow 0$ to obtain the existence of energy-variational solutions for system~\eqref{sys:two phase} with a double-obstacle potential $W_\mathrm{dw}+I_{[-1,1]}$.
\begin{theorem}[Limit passage in the setting of energy-variational solutions]\label{exitence evs double well}
         Let $\gamma=0$ and let the phase-field potential be given by  $W=W_{\mathrm{dw}}+I_{[-1,1]}$ as in \eqref{def-W}. Let the  Assumptions~\ref{ASM:domain}-\ref{ASM:Double well potential} be satisfied. Then there exists an energy-variational solution $(v,S,\varphi,\mu,E)$ of type $\mathcal{K}$, in the sense of Definition~\ref{defi: EVS} with $E(0)=\mathcal{E}(v_{0},S_{0},\varphi_{0})$, for the non-regularized system~\eqref{sys:two phase} with $\gamma=0$ where $\mathcal{K}$ is given as~\eqref{regularity weight for non-regularized}. Moreover, $\abs{\varphi}\leq1$ a.e.\ in $\Omega\times[0,T)$.
\end{theorem}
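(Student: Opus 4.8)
The plan is to pass to the limit $\alpha\searrow0$ in the family $(v_\alpha,S_\alpha,\varphi_\alpha,\mu_\alpha,E_\alpha)_\alpha$ of energy-variational solutions furnished by Theorem~\ref{exitence evs non-regulairzed system} for the potentials $W_\alpha=W_{\mathrm{dw}}+W_{\mathrm{sg},\alpha}$, which satisfy~\eqref{EVS inequality alpha}, exploiting precisely the structural advantage of the energy-variational formulation: every term that cannot be passed to the limit as a product of a weakly and a strongly convergent factor is handled either by weak lower semicontinuity of a convexified functional or by a closedness property of a subdifferential, so that no Gr\"onwall-type argument is required.

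\textbf{Step 1: compactness.} From the $\alpha$-uniform bounds of Lemma~\ref{est:alpha uniform}, together with the weak momentum balance~\eqref{weak formualtion: momentum} and the weak Cahn--Hilliard law~\eqref{weak formualtion: CH} (available here by Propositions~\ref{recover weak momentum balance} and~\ref{recover weak CH}, since $\mathcal{K}$ from~\eqref{regularity weight for non-regularized} depends only on $\tilde S$), one extracts, along a not relabelled subsequence and for every $0<T'<T$ followed by a diagonal argument in $T'$, the convergences $v_\alpha\rightharpoonup v$ in $L^2(0,T';H^1)$ and $\overset{*}{\rightharpoonup}v$ in $L^\infty(0,T';L^2_{\mathrm{div}})$, $S_\alpha\overset{*}{\rightharpoonup}S$ in $L^\infty(0,T';L^2_{\mathrm{sym,Tr}})$, $\varphi_\alpha\overset{*}{\rightharpoonup}\varphi$ in $L^\infty(0,T';H^1)$ and $\rightharpoonup\varphi$ in $L^2(0,T';H^2)$, $\mu_\alpha\rightharpoonup\mu$ in $L^2(0,T';H^1)$, $\beta_\alpha\rightharpoonup\beta$ in $L^2(0,T';L^2)$, and $E_\alpha\overset{*}{\rightharpoonup}E$ in $BV_{\mathrm{loc}}([0,T])$ with $E_\alpha\to E$ in $L^p_{\mathrm{loc}}([0,T))$ for all $p\in[1,\infty)$ and a.e.\ pointwise. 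Using $\partial_t\varphi_\alpha$ bounded in $L^2(0,T';H^{-1})$ and a negative-order bound on $\partial_t(\rho_\alpha v_\alpha)$ from~\eqref{weak formualtion: momentum}, as in the proof of~\cite[Theorem 5.1]{arxiv:2509.25508}, the Aubin--Lions lemma upgrades this to the strong convergences $\varphi_\alpha\to\varphi$ in $L^2(0,T';H^1)$ and $v_\alpha\to v$ in $L^2(0,T';L^2)$; passing to a further subsequence, $\varphi_\alpha\to\varphi$ a.e., so $|\varphi|\le1$ a.e.\ in $\Omega\times[0,T)$ and $\rho(\varphi_\alpha),\nu(\varphi_\alpha),\eta(\varphi_\alpha),m(\varphi_\alpha),W_{\mathrm{dw}}'(\varphi_\alpha)$ converge a.e.\ and boundedly to the corresponding expressions in $\varphi$.

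\textbf{Step 2: identification of $\beta$.} By Proposition~\ref{recover strong GT} the $\alpha$-solution satisfies the strong Gibbs--Thomson law $\mu_\alpha=-\Delta\varphi_\alpha+W_{\mathrm{dw}}'(\varphi_\alpha)+\beta_\alpha$ with $\beta_\alpha=W_{\mathrm{sg},\alpha}'(\varphi_\alpha)$, and $|\varphi_\alpha|<1+\alpha$ a.e., so $\beta_\alpha\in\partial\mathcal{E}_{\mathrm{sg},\alpha}(\varphi_\alpha)$ in $L^2(0,T';L^2(\Omega))$ for the time-integrated functional~\eqref{space time energy: singular}, while $\beta_\alpha$ is bounded there by~\eqref{est:derivative of singular potential}. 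Since $\varphi_\alpha\to\varphi$ strongly and $\beta_\alpha\rightharpoonup\beta$ weakly in $L^2(0,T';L^2(\Omega))$, Corollary~\ref{M-conv:subdifferential} together with Remark~\ref{subdifferential integrand} yields $\beta\in\partial\mathcal{I}_K(\varphi)$, i.e.\ $\beta(x,t)\in\partial I_{[-1,1]}(\varphi(x,t))$ for a.e.\ $(x,t)$. This is the step in which the blow-up of $W_{\mathrm{sg},\alpha}'$ at $\pm(1+\alpha)$ is absorbed into the obstacle constraint, replacing the pointwise estimates on $W'$ used in the logarithmic case.

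\textbf{Step 3: limit passage and conclusion.} We test~\eqref{EVS inequality alpha} against $\phi\in C_0^\infty([0,T'))$ with $\phi\ge0$, bringing it into the form treated by Lemma~\ref{Lem: equivalence weak form}, and let $\alpha\searrow0$. All linear terms and all terms pairing a weakly $L^2$-convergent factor with a strongly $L^2$-convergent one or with a fixed smooth factor pass directly; this covers $\rho_\alpha v_\alpha\otimes v_\alpha$, $v_\alpha\otimes J_\alpha$, $\nabla\varphi_\alpha\otimes\nabla\varphi_\alpha$, $(v_\alpha\cdot\nabla\varphi_\alpha)$, $\eta(\varphi_\alpha)S_\alpha$, $m(\varphi_\alpha)\nabla\mu_\alpha$, $S_\alpha\otimes v_\alpha$, $\eta(\varphi_\alpha)\sym{\nabla v_\alpha}$, and in the Gibbs--Thomson term $\Delta\varphi_\alpha$, $W_{\mathrm{dw}}'(\varphi_\alpha)$ and $\beta_\alpha$, whose limit is $\mu=-\Delta\varphi+W_{\mathrm{dw}}'(\varphi)+\beta$ with $\beta\in\partial I_{[-1,1]}(\varphi)$ from Step~2; the $\gamma$-terms are absent since $\gamma=0$. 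The contributions $2(\nu(\varphi_\alpha)-\nu_1)|\sym{\nabla v_\alpha}|^2$, $m(\varphi_\alpha)|\nabla\mu_\alpha|^2$, and the kinetic and phase-field parts of $\mathcal{K}(\tilde S)\mathcal{E}_\alpha(v_\alpha,S_\alpha,\varphi_\alpha)$ are handled by weak lower semicontinuity, writing e.g.\ $\sqrt{2(\nu(\varphi_\alpha)-\nu_1)}\,\sym{\nabla v_\alpha}\rightharpoonup\sqrt{2(\nu(\varphi)-\nu_1)}\,\sym{\nabla v}$ in $L^2$, by the strong convergence of $v_\alpha$, $\nabla\varphi_\alpha$ and $W_{\mathrm{dw}}(\varphi_\alpha)$, and by $W_{\mathrm{sg},\alpha}(\varphi_\alpha)\ge0$ with Lemma~\ref{M-conv:singular potential}\,(M1) and $|\varphi|\le1$ giving $\liminf_\alpha\int_0^{T'}\!\int_\Omega W_{\mathrm{sg},\alpha}(\varphi_\alpha)\ge0$; the plastic terms by lower semicontinuity and $\varphi$-continuity of $\mathcal{P}$ from Assumption~\ref{ASM:Dissipation potential}. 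The only product of two merely weakly $L^2$-convergent factors is $(S_\alpha\skw{\nabla v_\alpha}-\skw{\nabla v_\alpha}S_\alpha):\tilde S$; this is not passed to the limit on its own, but, after moving $\int_s^t\mathcal{K}(\tilde S)\mathcal{E}_\alpha(v_\alpha,S_\alpha,\varphi_\alpha)\,\dd{\tau}$ to the left-hand side, is grouped with $2\nu_1|\sym{\nabla v_\alpha}|^2$ and $\mathcal{K}(\tilde S)\tfrac12\|S_\alpha\|_{L^2}^2$ into a functional which, by the choice~\eqref{regularity weight for non-regularized} of $\mathcal{K}$ and Korn's inequality on $H^1_{0,\mathrm{div}}(\Omega)$, is a nonnegative convex quadratic in $(\nabla v_\alpha,S_\alpha)$ for a.e.\ $t$, hence weakly lower semicontinuous on $L^2$ in space and time — the convexification already used in~\cite{ALREVS} and~\cite{arxiv:2509.25508}. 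Collecting these and noting $\mathcal{K}(\tilde S)E_\alpha\to\mathcal{K}(\tilde S)E$ and $\int\langle f,v_\alpha-\tilde v\rangle_{H^1}\to\int\langle f,v-\tilde v\rangle_{H^1}$, the $\phi$-tested~\eqref{EVS inequality alpha} passes in the limit to the $\phi$-tested~\eqref{EVS inequality}; the inequality $E\ge\mathcal{E}(v,S,\varphi)$ a.e.\ follows from $E_\alpha\to E$ in $L^1$, weak lower semicontinuity of $\mathcal{E}$ and $|\varphi|\le1$. Applying Lemma~\ref{Lem: equivalence weak form} recovers~\eqref{EVS inequality} for a.e.\ $s<t$ and identifies $E\in BV_{\mathrm{loc}}([0,T])$; the attainment of the initial data with $E(0)=\mathcal{E}(v_0,S_0,\varphi_0)$ — using $\mathcal{E}_\alpha(v_0,S_0,\varphi_0)=\mathcal{E}(v_0,S_0,\varphi_0)+\int_\Omega W_{\mathrm{sg},\alpha}(\varphi_0)\,\dd{x}\to\mathcal{E}(v_0,S_0,\varphi_0)$ exactly as in the proof of Lemma~\ref{M-conv:singular potential}\,(M2) — follows as in Step~4 of the proof of Theorem~\ref{exitence evs regulairzed system}. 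I expect the main obstacle to be precisely the passage to the limit in the objective-rate product $(S_\alpha\skw{\nabla v_\alpha}-\skw{\nabla v_\alpha}S_\alpha):\tilde S$, resolved through the regularity-weight convexification, together with the identification of $\beta$ via weak--strong closedness of subdifferentials under Mosco convergence; the remaining terms are routine bookkeeping with the compactness of Step~1.
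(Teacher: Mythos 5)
Your proposal is correct and follows essentially the same route as the paper: extract compactness from Lemma~\ref{est:alpha uniform}, identify $\beta$ through the Mosco-convergence machinery of Lemma~\ref{M-conv:singular potential} and Corollary~\ref{M-conv:subdifferential}, and then pass to the limit in the $\phi$-tested inequality. The paper's own proof of Theorem~\ref{exitence evs double well} is very compressed at the final stage --- after establishing the convergences and identifying $\beta$ it simply asserts that ``the limit passage is a direct consequence of the convergence results'' and defers to the arguments of~\cite[Theorem 5.1]{arxiv:2509.25508} --- whereas you spell out the handling of the Zaremba--Jaumann product $(S_\alpha \skw{\nabla v_\alpha}-\skw{\nabla v_\alpha}S_\alpha):\tilde S$ via the Korn-weighted convexification with $\mathcal{K}(\tilde S)=\tfrac{k_\Omega^2}{\nu_1}\|\tilde S\|_{L^\infty}^2$, which is exactly the mechanism sketched in the paper's introduction and used in the cited proof, so this is a faithful filling-in of the deferred step rather than a different argument. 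One very minor imprecision: the quadratic functional you form from $2\nu_1|\sym{\nabla v_\alpha}|^2$, the cross term, and $\tfrac{\mathcal{K}(\tilde S)}{2}\|S_\alpha\|_{L^2}^2$ is nonnegative (hence convex and weakly l.s.c.) only after integrating in space, since Korn's inequality on $H^1_{0,\mathrm{div}}(\Omega)$ is needed to control $\skw{\nabla v_\alpha}$ by $\sym{\nabla v_\alpha}$; calling it a ``nonnegative convex quadratic in $(\nabla v_\alpha,S_\alpha)$ for a.e.\ $t$'' should be read as a statement about the spatial functional at fixed $t$, not a pointwise one --- but this is evidently what you intend, and the conclusion of weak lower semicontinuity in $L^2$ in space and time is sound.
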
 
\begin{proof}
        \begin{subequations}From Lemma~\ref{est:alpha uniform}, we obtain
        \begin{align}
            \lVert v_{\alpha}\rVert_{L^{2}(0,T^{\prime};H_{0,\mathrm{div}}^{1}(\Omega))}
            +\lVert v_{\alpha}\rVert_{L^{\infty}(0,T^{\prime};L_{\mathrm{div}}^{2}(\Omega))}&\leq C,
            \label{BND:v}\\
            \lVert S_{\alpha}\rVert_{L^{\infty}(0,T^{\prime};L_{\mathrm{sym,Tr}}^{2}(\Omega))}
            &\leq C,
            \label{BND:S}\\
            \lVert \varphi_{\alpha}\rVert_{L^{\infty}(0,T^{\prime};H^{1}(\Omega))}&\leq C,
            \label{BND:phase}\\
            \lVert \mu_{\alpha}\rVert_{L^{2}(0,T^{\prime};H^{1}(\Omega))}&\leq C,
            \label{BND:flux}
        \end{align}
        for all $0<T^{\prime}<T$. Moreover, by~\eqref{est:laplacian} and the homogeneous Neumann boundary condition $\vec{n}\cdot\nabla\varphi_{\alpha}|_{\partial\Omega}=0$, we can further derive
        \begin{align}
            \lVert \varphi_{\alpha}\rVert_{L^{2}(0,T^{\prime};H^{2}(\Omega))}\leq C 
        \end{align}
        \end{subequations}
        also thanks to \cite[Theorem 3.1.3.3]{zbMATH05960425}.
        \begin{subequations}
        Therefore, by a classical diagonalization argument, we can extract a not relabeled subsequence and a limit quadruplet $(v,S,\varphi,\mu)$ such that
        \begin{align}
            v_{\alpha}&\rightharpoonup v\text{ in }L^{2}(0,T^{\prime};H_{0,\mathrm{div}}^{1}(\Omega)),
            \label{WK CONV:v}\\
            v_{\alpha}&\overset{*}{\rightharpoonup} v\text{ in }L^{\infty}(0,T^{\prime};L_{\mathrm{div}}^{2}(\Omega)),
            \\
            S_{\alpha}&\overset{*}{\rightharpoonup} S\text{ in } L^{\infty}(0,T^{\prime};L_{\mathrm{sym,Tr}}^{2}(\Omega)),
            \label{WK CONV:S}\\
            \varphi_{\alpha}&\rightharpoonup \varphi\text{ in }L^{2}(0,T^{\prime};H^{2}(\Omega)),
            \label{WK CONV:phase}\\
            \varphi_{\alpha}&\overset{*}{\rightharpoonup}\varphi\text{ in }L^{\infty}(0,T^{\prime};H^{1}(\Omega)),
            \\
            \mu_{\alpha}&\rightharpoonup\mu\text{ in }L^{2}(0,T^{\prime};H^{1}(\Omega)),
            \label{WK CONV:flux}
        \end{align}
        for all $0<T^{\prime}<T$. Furthermore, since~\eqref{weak formualtion: momentum} and~\eqref{weak formualtion: CH} are satisfied, we conclude the following strong convergence results
        \begin{align}
            v_{\alpha}&\to v\text{ in }L^{2}(0,T^{\prime};L^{2}(\Omega)),
            \label{STR CONV:v}\\
            \varphi_{\alpha}&\to\varphi\text{ in }L^{2}(0,T^{\prime};H^{1}(\Omega)),
            \label{STR CONV:S}\\
            \varphi_{\alpha}&\to\varphi\text{ a.e.\ in }\Omega\times[0,T)\label{AE CONV:phase},
        \end{align}
        with the help of Aubin-Lions Lemma. Since $\abs{\varphi_{\alpha}}<1+\alpha$ a.e.\ in $\Omega\times[0,T)$, we deduce that $\abs{\varphi}\leq1$ a.e.\ in $\Omega\times[0,T)$.

        Moreover, for the energy functions $(E_\alpha)_\alpha$ we have for all $\alpha>0$
        \begin{equation}
        \label{E-alpha-Linfty}
            E_{\alpha}\Big|_{0}^{t}\leq C\int_{0}^{t}\lVert f\rVert_{H^{-1}}^{2}\dd{\tau}
        \end{equation}
        by a similar argument as in the proof of Theorem~\ref{exitence evs regulairzed system}. Thus, for each $\alpha>0$ the function $E_{\alpha}$ can be viewed as the sum of a  monotonically  decreasing function $E_{\alpha}(t)-C\int_{0}^{t}\lVert f\rVert_{H^{-1}}^{2}\dd{\tau}$ and a bounded  monotonically increasing function $C\int_{0}^{t}\lVert f\rVert_{H^{-1}}^{2}\dd{\tau}$. In addition, the sequence of initial energies $(E_{\alpha}(0))_\alpha$ is uniformly bounded. Therefore, $(E_{\alpha})_\alpha$ is uniformly bounded in $BV([0,T^{\prime}])$ and hence we conclude the existence of a not relabelled subsequence and of a limit energy function $E\in BV([0,T^{\prime}])$ such that 
        \begin{equation}
            E_{\alpha}\overset{*}{\rightharpoonup} E\text{ in }BV([0,T^{\prime}]). 
        \end{equation} 
        Moreover, by the compact embedding of $BV$ spaces, we further have
        \begin{align*}
            E_{\alpha}&\to E\text{ in }L^{1}([0,T^{\prime}]),
            \\
            E_{\alpha}(t)&\to E(t)\text{ for a.e.\ }t\in[0,T^{\prime}]
        \end{align*}
        along a further, not relabelled subsequence. 
        In addition, thanks to the uniform $L^\infty$-bound \eqref{E-alpha-Linfty}, we can further improve the strong $L^1$-convergence to 
        \begin{equation}\label{STR CONV:Upper bound energy}
            E_{\alpha}\to E\text{ in }L^{p}([0,T^{\prime}]) \quad\text{ for any }p\in[1,\infty)\,.
        \end{equation} 
        By following the arguments in the proof of Theorem~\ref{exitence evs regulairzed system}, we conclude that
        \begin{equation*}
            E\geq\mathcal{E}(v,S,\varphi)
        \end{equation*}
        a.e. in $(0,T)$.

        Furthermore, from the uniform bound \eqref{est:derivative of singular potential} on $(\beta_\alpha)_\alpha$, we derive that 
        \begin{equation}\label{WK CONV:subdifferential}
            \beta_\alpha\rightharpoonup\beta\text{ in }L^{2}(0,T^{\prime};L^{2}(\Omega)).
        \end{equation}
        Recall that we have $\varphi_{\alpha}\to\varphi$ in $L^{2}(0,T^{\prime};L^{2}(\Omega))$. Hence, we obtain $\beta\in\partial\mathcal{I}_{K}(\varphi)$ thanks to Corollary~\ref{M-conv:subdifferential}. Notice that $\beta(x,t)\in\partial{I}_{[-1,1]}(\varphi(x,t))$ for a.e.\ $(x,t)\in\Omega\times(0,T)$ by  Remark~\ref{subdifferential integrand}. 
        \end{subequations}
        Therefore, the limit passage is a direct consequence of the convergence results \eqref{WK CONV:v}-\eqref{WK CONV:subdifferential}. One can also follow the idea of the proof of~\cite[][Theorem 5.1]{arxiv:2509.25508}.
\end{proof}
\begin{remark}
    From Proposition~\ref{recover strong GT}, one can see that an energy-variational solution obtained from Theorem~\ref{exitence evs double well} also satisfies
    \begin{equation}
        \mu(x,t)=-\Delta\varphi(x,t)+W_{\mathrm{dw}}^{\prime}(\varphi(x,t))+\beta(x,t)\quad\text{a.e.\ in }\Omega\times[0,T']\,,
    \end{equation}
     where $\beta\in\partial\mathcal{I}_K$. 
    Moreover, from Proposition~\ref{recover weak momentum balance} and~\ref{recover weak CH}, one can see the weak formulations~\eqref{weak formualtion: momentum} and~\eqref{weak formualtion: CH} are satisfied.
\end{remark}
\begin{remark}\label{rm: on evs with regularized system}
    In the case of the regularized system~\eqref{sys:two phase} with $\gamma>0$, one can verify the existence of energy-variational solutions with a double-obstacle potential by following the same idea of Lemma~\ref{est:alpha uniform} and Theorem~\ref{exitence evs double well}.
\end{remark}
\subsection{Limit passage $\alpha\searrow 0$ in the setting of dissipative solutions}
\label{Sec:dissipsol-alpha0}
In Section~\ref{sec: alpha limit evs}, we established the existence of energy-variational solutions by performing limit passage $\alpha \searrow 0$. Although Proposition~\ref{relative energy-dissipation inequality for double well} ensures that every energy-variational solution is also a dissipative solution, in this section, we additionally carry out the limit passage $\alpha \searrow 0$ directly within the framework of dissipative solutions. The convergence results thus obtained in this work are summarized in Remark \ref{Rem-final-conv}. 
\begin{theorem}[Limit passage in the setting of dissipative solutions]\label{exitence ds double well} 
Let $\gamma=0$ and the phase-field potential be given by $W=W_{\mathrm{dw}}+I_{[-1,1]}$ in \eqref{def-W}. Let the  Assumptions~\ref{ASM:domain}-\ref{ASM:Double well potential} be satisfied. Assume further that $W_{\mathrm{dw}}\in C^{3}(\mathbb{R})$. Then there exists a dissipative solution $(v,S,\varphi,\mu)$ of type $\mathcal{K}$, in the sense of Definition~\ref{defi:ds double well}, for the non-regularized system~\eqref{sys:two phase} with $\gamma=0,$ where $\mathcal{K}$ is given as~\eqref{regularity weight for non-regularized}. Moreover, there holds $\abs{\varphi}\leq1$ a.e.\ in $\Omega\times[0,T)$.
\end{theorem}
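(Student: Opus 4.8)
The plan is to carry over the limit passage $\alpha\searrow0$ performed for energy-variational solutions in Theorem~\ref{exitence evs double well} to the level of the relative energy-dissipation estimate~\eqref{defi: ds estimate}, so that the limit object satisfies Definition~\ref{defi:ds double well}. For each $\alpha>0$ I fix a dissipative solution $(v_\alpha,S_\alpha,\varphi_\alpha,\mu_\alpha)$ of type $\mathcal{K}$ from~\eqref{regularity weight for non-regularized} of system~\eqref{sys:two phase} with $\gamma=0$ and the logarithmic-type potential $W_\alpha=W_{\mathrm{dw}}+W_{\mathrm{sg},\alpha}$; such a solution is supplied by Theorem~\ref{exitence evs non-regulairzed system} together with the obvious analogue, on the interval $(-1-\alpha,1+\alpha)$, of Proposition~\ref{relative energy-dissipation inequality for EVS}, and it satisfies $\mu_\alpha=-\Delta\varphi_\alpha+W_{\mathrm{dw}}'(\varphi_\alpha)+\beta_\alpha$ with $\beta_\alpha=W_{\mathrm{sg},\alpha}'(\varphi_\alpha)$. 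Since every test triple with $|\tilde\varphi|\le1$ is admissible for the $W_\alpha$-problem, each $(v_\alpha,S_\alpha,\varphi_\alpha,\mu_\alpha)$ obeys~\eqref{defi: ds estimate} with $W=W_\alpha$ for all such triples. Using the $\alpha$-uniform bounds of Lemma~\ref{est:alpha uniform}, a diagonal extraction produces, exactly as in the proof of Theorem~\ref{exitence evs double well}, weak(-$*$) limits $v_\alpha\rightharpoonup v$, $S_\alpha\overset{*}{\rightharpoonup}S$, $\varphi_\alpha\rightharpoonup\varphi$, $\mu_\alpha\rightharpoonup\mu$, the strong convergences $v_\alpha\to v$ in $L^2(0,T';L^2(\Omega))$ and $\varphi_\alpha\to\varphi$ in $L^2(0,T';H^1(\Omega))$ and a.e., and $\beta_\alpha\rightharpoonup\beta$ in $L^2(0,T';L^2(\Omega))$; Corollary~\ref{M-conv:subdifferential} and Remark~\ref{subdifferential integrand} then give $\beta(x,t)\in\partial I_{[-1,1]}(\varphi(x,t))$ a.e., and in particular $|\varphi|\le1$ a.e.

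For the limit passage in~\eqref{defi: ds estimate} I would exploit that $\mathcal{K}$ in~\eqref{regularity weight for non-regularized} depends only on $\tilde S$ and not on $\alpha$, so $\exp(\int_s^t\mathcal{K})$ is a fixed bounded smooth weight: writing $g_\alpha(t):=\exp(-\int_0^t\mathcal{K})\,\mathcal{R}(v_\alpha(t),S_\alpha(t),\varphi_\alpha(t)\,|\,\tilde v(t),\tilde S(t),\tilde\varphi(t))$ and putting the remaining integrand (the $\mathcal{A}_0$-pairing, $\mathcal{P}(\varphi_\alpha;S_\alpha)-\mathcal{P}(\varphi_\alpha;\tilde S)$, and $\mathcal{W}^{(\mathcal{K})}_0$ from~\eqref{defi: relative dissipation}) times $\exp(-\int_0^s\mathcal{K})$ into $h_\alpha$, estimate~\eqref{defi: ds estimate} becomes $g_\alpha(t)+\int_0^t h_\alpha\le g_\alpha(0)$. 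The $\alpha$-uniform estimates bound $(g_\alpha)$ in $L^\infty(0,T')$ and $(h_\alpha)$ in $L^1(0,T')$ (hence, by Lemma~\ref{Lem: equivalence weak form}, $(g_\alpha)$ uniformly in $BV([0,T'])$, so that $g_\alpha\to g$ in $L^1$ along a subsequence). On the right-hand side, $g_\alpha(0)$ (built with $W_{\alpha,\kappa}$) converges to the double-obstacle relative energy, since $|\varphi_0|\le1$ forces $W_{\mathrm{sg},\alpha}(\varphi_0)\to0$ and $W_{\mathrm{sg},\alpha}'(\tilde\varphi(0))\to0$ uniformly on $[-1,1]$. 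For the left-hand side I would argue lower semicontinuity termwise: $\liminf_\alpha g_\alpha(t)$ bounded below by the double-obstacle value at $t$ by convexity of the relative kinetic/elastic/phase-field energies combined with weak-in-time continuity of $v_\alpha,S_\alpha,\varphi_\alpha$ (from the uniform bounds on $\partial_t(\rho_\alpha v_\alpha),\partial_tS_\alpha,\partial_t\varphi_\alpha$ implied by~\eqref{weak formualtion: momentum}, \eqref{weak formualtion: CH}, \eqref{strong TG law}); the dissipation integrals $\int m(\varphi_\alpha)|\nabla\mu_\alpha-\nabla\tilde\mu_\alpha|^2$, $\int2\nu(\varphi_\alpha)|\sym{\nabla v_\alpha}-\sym{\nabla\tilde v}|^2$ together with the term $-2(S_\alpha-\tilde S)\skw{\nabla v_\alpha-\nabla\tilde v}:\tilde S$ and the contribution $\mathcal{K}(\tilde S)\,\mathcal{R}(v_\alpha,S_\alpha,\varphi_\alpha|\cdots)$ by the convexification built into~\eqref{regularity weight for non-regularized} (the quadratic form $2\nu_1|\sym{\nabla v}|^2+(S\skw{\nabla v}-\skw{\nabla v}S):\tilde S+\tfrac{k_\Omega^2}{2\nu_1}\|\tilde S\|_{L^\infty}^2\,|S|^2$ being nonnegative, hence weakly lower semicontinuous); $\liminf_\alpha\mathcal{P}(\varphi_\alpha;S_\alpha)\ge\mathcal{P}(\varphi;S)$ in the time-integrated sense by joint lower semicontinuity and convexity of the integrand of~\eqref{integral form of dissipation potential} (Ioffe), using $\varphi_\alpha\to\varphi$ strongly and $S_\alpha\rightharpoonup S$ weakly; and the remaining cross terms of~\eqref{defi: relative dissipation}, the term $-\mathcal{P}(\varphi_\alpha;\tilde S)$ (by continuity of $P$ in the phase variable, Assumption~\ref{ASM:Dissipation potential}, and dominated convergence), and the $\mathcal{A}_0$-pairing converging with equality, since each is a product of a strongly convergent factor (built from $v_\alpha,\varphi_\alpha,\nabla\varphi_\alpha,\nu(\varphi_\alpha),\rho_\alpha,m(\varphi_\alpha)$ or smooth data) and a weakly convergent one (built from $S_\alpha$ or $\nabla\mu_\alpha$).

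The point I expect to be the main obstacle — and which is absent in the energy-variational formulation, because there $\tilde\mu$ enters as a free test function — is the $\alpha$-dependence of the system operator~\eqref{defi: system operator} and of~\eqref{defi: relative dissipation} through $W_{\mathrm{sg},\alpha}'(\tilde\varphi)$ and $W_{\mathrm{sg},\alpha}''(\tilde\varphi)$: one has $W_{\mathrm{sg},\alpha}'\to0$ uniformly on $[-1,1]$, whereas $W_{\mathrm{sg},\alpha}''$ is only uniformly bounded on $[-1,1]$ and tends to $0$ merely on $(-1,1)$, not at $\pm1$. I would resolve this using the standard fact that a test function $\tilde\varphi\in C_0^\infty$ satisfies $\nabla\tilde\varphi=0$, $D^2\tilde\varphi=0$, $\ldots$ almost everywhere on the level sets $\{\tilde\varphi=\pm1\}$: this first gives $\nabla\tilde\mu_\alpha=W_{\mathrm{dw}}''(\tilde\varphi)\nabla\tilde\varphi+W_{\mathrm{sg},\alpha}''(\tilde\varphi)\nabla\tilde\varphi-\nabla\Delta\tilde\varphi\to W_{\mathrm{dw}}''(\tilde\varphi)\nabla\tilde\varphi-\nabla\Delta\tilde\varphi$ strongly (dominated convergence, using $W_{\mathrm{sg},\alpha}''\to0$ on $\{|\tilde\varphi|<1\}$ and the uniform $L^\infty([-1,1])$ bound), $\tilde\mu_\alpha\to-\Delta\tilde\varphi+W_{\mathrm{dw}}'(\tilde\varphi)$ strongly, and $\mathcal{A}_0^{(3)}(\tilde v,\tilde\varphi)$ as well as $\divge{m(\tilde\varphi)\nabla\tilde\mu_\alpha}$ vanish a.e.\ on $\{|\tilde\varphi|=1\}$, so that the factors $W_{\mathrm{sg},\alpha}''(\tilde\varphi)(\varphi_\alpha-\tilde\varphi)$ occurring in the third slot of~\eqref{defi: ds estimate} and in~\eqref{defi: relative dissipation} only contribute on $\{|\tilde\varphi|<1\}$, where they tend to zero. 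Granting these convergences, the left-hand side of~\eqref{defi: ds estimate} is lower semicontinuous, so passing to $\liminf_\alpha$ in $g_\alpha(t)+\int_0^t h_\alpha\le g_\alpha(0)$ yields the double-obstacle estimate~\eqref{defi: ds estimate} for a.e.\ $t\in(0,T')$ including $s=0$, while the attainment of $(v_0,S_0,\varphi_0)$ and $|\varphi|\le1$ are obtained as in the proof of Theorem~\ref{exitence evs regulairzed system}. (Alternatively, the statement follows at once by combining Theorem~\ref{exitence evs double well} with Proposition~\ref{relative energy-dissipation inequality for double well}; the point of the direct argument above is that it needs only lower semicontinuity, Mosco-convergence and compactness, with no Gr\"onwall step.)
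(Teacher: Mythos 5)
Your \emph{direct} route is genuinely different from the paper's. Where you keep the test function $\tilde\varphi$ fixed (with $|\tilde\varphi|\le1$) and appeal to the Stampacchia-type fact that all derivatives of $\tilde\varphi$ vanish a.e.\ on the level sets $\{\tilde\varphi=\pm1\}$, the paper instead \emph{shrinks} the test function to $\tilde\varphi_\alpha:=(1-\alpha^\theta)\tilde\varphi$ with a fixed $\theta\in(0,\tfrac12)$, which keeps $|\tilde\varphi_\alpha|\le1-\alpha^\theta$ and thus bounded away from $\pm(1+\alpha)$. This is what lets the paper show the \emph{uniform} convergences $W_{\mathrm{sg},\alpha}^{(k)}(\tilde\varphi_\alpha)\to0$ for $k=1,2,3$: the crucial computation is $|W_{\mathrm{sg},\alpha}'''(\tilde\varphi_\alpha)|\le C\alpha/(\alpha+\alpha^\theta)^2\le C\alpha^{1-2\theta}\to0$, which is exactly why $\theta<\tfrac12$ is required. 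Your (correct) alternative paragraph — combine Theorem~\ref{exitence evs double well} with Proposition~\ref{relative energy-dissipation inequality for double well} — is also the content of the paper's Remark~\ref{Rem-final-conv}.

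The direct argument as you wrote it, however, has a genuine gap exactly at the place you flagged as the main obstacle, because you only track $W_{\mathrm{sg},\alpha}'$ and $W_{\mathrm{sg},\alpha}''$ but the term $\divge{m(\tilde\varphi)\nabla\tilde\mu_\alpha}$ (hence $\mathcal{A}_0^{(3)}$, which is paired against $-\Delta(\varphi_\alpha-\tilde\varphi)+\dots$, not only against $W''$-weighted factors) involves $\Delta\tilde\mu_\alpha$ and therefore the \emph{third} derivative through $W_{\mathrm{sg},\alpha}'''(\tilde\varphi)\,|\nabla\tilde\varphi|^2$. Unlike $W_{\mathrm{sg},\alpha}''$, the third derivative is \emph{not} uniformly bounded on $[-1,1]$: $W_{\mathrm{sg},\alpha}'''(\pm1)\sim\pm1/\alpha\to\pm\infty$. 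The level-set observation only kills this on $\{|\tilde\varphi|=1\}$; on $\{|\tilde\varphi|<1\}$ you still need a dominating function for dominated convergence. The ``uniform $L^\infty([-1,1])$ bound'' you invoke applies to $W_{\mathrm{sg},\alpha}''$ only and does not cover $W_{\mathrm{sg},\alpha}'''$. The argument can be saved, but this requires an additional ingredient you did not state: from $|W_{\mathrm{sg},\alpha}'''(s)|\le 1/(1-|s|)$ on $(-1,1)$ together with the elementary $C^{1,1}$ estimate $|\nabla\tilde\varphi|^2\le 2\|D^2\tilde\varphi\|_{L^\infty}\,(1-|\tilde\varphi|)$ (valid since $\tilde\varphi$ is smooth and $|\tilde\varphi|\le1$) one obtains the bounded majorant $|W_{\mathrm{sg},\alpha}'''(\tilde\varphi)|\,|\nabla\tilde\varphi|^2\le 2\|D^2\tilde\varphi\|_{L^\infty}$, after which dominated convergence yields $\divge{m(\tilde\varphi)\nabla\tilde\mu_\alpha}\to\divge{m(\tilde\varphi)\nabla\tilde\mu}$ in $L^2$. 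Without this inequality the direct approach does not close; the shrinking trick $\tilde\varphi_\alpha=(1-\alpha^\theta)\tilde\varphi$ used in the paper avoids the issue entirely by making all three derivatives vanish uniformly.

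Two smaller inaccuracies: the time-discrete existence (Lemma~\ref{Existence of solution to the time discrete problem}/Theorem~\ref{exitence evs regulairzed system} in this paper) is not what supplies the $\alpha$-family of dissipative solutions — the paper cites the earlier preprint's Theorem~5.1 for that — and your sketch of lower semicontinuity of $g_\alpha(t)$ ``termwise by convexity and weak continuity in time'' glosses over that the paper itself does not re-prove this but invokes the limit-passage machinery already established in [arxiv:2509.25508] for all terms except the $\divge{m(\tilde\varphi)\nabla\tilde\mu_\alpha}$-contribution, which is the only one it treats in detail.
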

\begin{proof}
Without loss of generality, assume that $0<\alpha<1$. By~\cite[][Theorem 5.1]{arxiv:2509.25508}, there exists a dissipative solution $(v_{\alpha},S_{\alpha},\varphi_{\alpha},\mu_{\alpha})$ of type $\mathcal{K}$, in the sense of Definition~\ref{defi:ds singular}, where $\mathcal{K}$ is given as in \eqref{regularity weight for non-regularized}. Moreover, for each $\alpha>0,$ the dissipative solution $(v_{\alpha},S_{\alpha},\varphi_{\alpha},\mu_{\alpha})$ satisfies
\begin{equation}
    \mu_{\alpha}=-\Delta\varphi_{\alpha}+W_{\mathrm{dw}}^{\prime}(\varphi_{\alpha})+W_{\mathrm{sg},\alpha}^{\prime}(\varphi_{\alpha}).
\end{equation}
Following the arguemnts of the proofs of  Lemma~\ref{est:alpha uniform} and Theorem~\ref{exitence evs double well}, we conclude the existence of a not relabelled subsequence and of  a limit quadruplet $(v,S,\varphi,\mu)$ such that
    \begin{align}
        v_{\alpha}&\rightharpoonup v\text{ in }L^{2}(0,T^{\prime};H_{0,\mathrm{div}}^{1}(\Omega)),
            \label{WK CONV DS:v}\\
        v_{\alpha}&\overset{*}{\rightharpoonup} v\text{ in }L^{\infty}(0,T^{\prime};L_{\mathrm{div}}^{2}(\Omega)),
            \\
        S_{\alpha}&\overset{*}{\rightharpoonup} S\text{ in } L^{\infty}(0,T^{\prime};L_{\mathrm{sym,Tr}}^{2}(\Omega)),
            \label{WK CONV DS:S}\\
        \varphi_{\alpha}&\rightharpoonup \varphi\text{ in }L^{2}(0,T^{\prime};H^{2}(\Omega)),
            \label{WK CONV DS:phase}\\
        \varphi_{\alpha}&\overset{*}{\rightharpoonup}\varphi\text{ in }L^{\infty}(0,T^{\prime};H^{1}(\Omega)),
            \\
        \mu_{\alpha}&\rightharpoonup\mu\text{ in }L^{2}(0,T^{\prime};H^{1}(\Omega)),
            \label{WK CONV DS:flux}\\
        v_{\alpha}&\to v\text{ in }L^{2}(0,T^{\prime};L^{2}(\Omega)),
            \label{STR CONV DS:v}\\
        \varphi_{\alpha}&\to\varphi\text{ in }L^{2}(0,T^{\prime};H^{1}(\Omega)),
            \label{STR CONV DS:S}\\
        \varphi_{\alpha}&\to\varphi\text{ a.e.\ in }\Omega\times[0,T)
            \label{AE CONV DS:phase}.
        \end{align}
Since $\abs{\varphi_{\alpha}}<1+\alpha$ a.e.\ in $\Omega\times[0,T)$ for each $\alpha>0$, we deduce that also $\abs{\varphi}\leq1$ a.e.\ in $\Omega\times[0,T)$.

In addition, we have
 \begin{equation}\label{WK CONV DS:subdifferential}
    W_{\mathrm{sg},\alpha}^{\prime}(\varphi_{\alpha})\rightharpoonup\beta\text{ in }L^{2}(0,T^{\prime};L^{2}(\Omega))\,,
\end{equation}
where $\beta\in\partial\mathcal{I}_{K}(\varphi)$ and $\beta(x,t)\in\partial{I}_{[-1,1]}(\varphi(x,t))$ for a.e.\  $(x,t)\in\Omega\times(0,T)$ by Remark~\ref{subdifferential integrand}. 

Let $\tilde{v}\in C_{0,\mathrm{div}}^{\infty}(\Omega\times[0,T))$, $\tilde{S}\in C_{0,\mathrm{sym,Tr}}^{\infty}(\Omega\times[0,T))$ and $\tilde{\varphi}\in C_{0}^{\infty}(\Omega\times[0,T))$ with $\abs{\tilde{\varphi}}\leq1$ be suitable test functions. We define $\tilde{\varphi}_{\alpha}$ as
\begin{equation}\label{STR CONV tilde varphi}
    \tilde{\varphi}_{\alpha}:=(1-\alpha^{\theta})\tilde{\varphi}
\end{equation}
for some fixed $0<\theta<\frac{1}{2}$. Notice that $\tilde{\varphi}_{\alpha}\in C_{0}^{\infty}(\Omega\times[0,T))$ and, since $\alpha\in(0,1),$ there also holds $\tilde{\varphi}_{\alpha}\in[-1+\alpha^{\theta},1-\alpha^{\theta}]\subseteq(-1-\alpha,1+\alpha)$ a.e. in $\Omega\times[0,T)$.  Hence, $(\tilde{v},\tilde{S},\tilde{\varphi}_{\alpha})$ are admissible test functions for Definition~\ref{defi:ds singular}.

Additionally, there holds  
\begin{equation*}
    \abs{\tilde{\varphi}-\tilde{\varphi}_{\alpha}}\leq \alpha^{\theta}\abs{\tilde{\varphi}}.
\end{equation*}
Therefore, with the choice $0<\theta<\frac{1}{2}$, we conclude that 
\begin{equation}\label{strong convergence of test function tilde varphi}
    \tilde{\varphi}_{\alpha}\to\tilde\varphi\text{ in }W^{k,p}(\Omega\times(0,T))
\end{equation}
for all $1\leq p\leq\infty$ and all $k\in\mathbb{N}$. 

Now, we would like to deduce a convergence result for the chemical potentials $(\tilde\mu_\alpha)_\alpha,$ where 
\begin{equation}
    \tilde{\mu}_{\alpha}=-\Delta\tilde{\varphi}_{\alpha}+W_{\mathrm{dw}}^{\prime}(\tilde\varphi_{\alpha})+W_{\mathrm{sg},\alpha}^{\prime}(\tilde\varphi_{\alpha}).
\end{equation}
For this, we in particular check the convergence of the derivatives of the logarithmic  potentials $(W_{\mathrm{sg},\alpha}(\tilde\varphi_\alpha))_\alpha$ up to order three.  
For the first derivatives, we have
\begin{equation}\label{uniform convergence of first derivative}
    \begin{aligned}
        \abs{ W_{\mathrm{sg},\alpha}^{\prime}(\tilde{\varphi}_{\alpha})}
        &=\abs{\alpha\ln(1+\alpha+\tilde{\varphi}_{\alpha})-\alpha\ln(1+\alpha-\tilde{\varphi}_{\alpha})}
        \\
        &\leq \alpha(C+\abs{\ln(\alpha+\alpha^{\theta})}) 
        \to0
    \end{aligned}
\end{equation}
uniformly on $\Omega\times(0,T)$.

Next, for the second derivatives, we derive
\begin{equation}\label{uniform convergence of second derivative}
    \begin{aligned}
        \abs{ W_{\mathrm{sg},\alpha}^{\prime\prime}(\tilde{\varphi_{\alpha}})}
        =&\abs{\frac{\alpha}{1+\alpha+\tilde{\varphi}_{\alpha}}+\frac{\alpha}{1+\alpha-\tilde{\varphi}_{\alpha}}}
        \\
        \leq& C\frac{\alpha}{\alpha+\alpha^{\theta}}
        \to0.
    \end{aligned}
\end{equation}
uniformly on $\Omega\times(0,T)$.

In addition, for the third derivative, we have
\begin{equation}\label{uniform convergence of third derivative}
    \begin{aligned}
        \abs{ W_{\mathrm{sg},\alpha}^{\prime\prime\prime}(\tilde{\varphi}_{\alpha})}
        =&\abs{\frac{\alpha}{(1+\alpha-\tilde{\varphi}_{\alpha})^{2}}-\frac{\alpha}{(1+\alpha+\tilde{\varphi}_{\alpha})^{2}}}
        \\
        \leq& C\frac{\alpha}{(\alpha+\alpha^{\theta})^{2}}\to0
    \end{aligned}
\end{equation}
uniformly on $\Omega\times(0,T)$, since $0<\theta<\frac{1}{2}$. Therefore, also in view of the convergence results~\eqref{STR CONV tilde varphi}, we now conclude  
\begin{equation}\label{strong convergence of test function tilde mu}
\begin{aligned}
    \tilde{\mu}_{\alpha}=-\Delta\tilde{\varphi}_{\alpha}+W_{\mathrm{dw}}^{\prime}(\tilde{\varphi}_{\alpha})+ W_{\mathrm{sg},\alpha}^{\prime}(\tilde{\varphi}_{\alpha})
    \to-\Delta\tilde{\varphi}+W_{\mathrm{dw}}^{\prime}(\tilde{\varphi})=\tilde{\mu}\text{ in }W^{2,p}(\Omega\times(0,T))
\end{aligned}
\end{equation}
for all $1\leq p<\infty$.

Following the arguments of the proof of~\cite[Theorem 5.1]{arxiv:2509.25508}, thanks to the convergence results \eqref{WK CONV DS:v}-\eqref{AE CONV DS:phase}, \eqref{strong convergence of test function tilde varphi}, and \eqref{strong convergence of test function tilde mu}, the limit passage $\alpha \searrow 0$ can be performed in all terms of \eqref{EVS inequality alpha}, except for the following term:  
\begin{equation}\label{term with tested chemi}
    \divge{m(\tilde{\varphi}_{\alpha})\nabla\tilde{\mu}_{\alpha}}\left(-\Delta(\varphi_{\alpha}-\tilde{\varphi}_{\alpha})
    +W_{\mathrm{dw}}^{\prime\prime}(\tilde{\varphi}_{\alpha})(\varphi_{\alpha}-\tilde{\varphi}_{\alpha})
    +W_{\mathrm{sg},\alpha}^{\prime\prime}(\tilde{\varphi}_{\alpha})(\varphi_{\alpha}-\tilde{\varphi}_{\alpha})\right)\,.
\end{equation}
Instead, to pass to the limit in this term in \eqref{term with tested chemi}, notice that the convergence results \eqref{WK CONV DS:phase} and \eqref{strong convergence of test function tilde varphi}-\eqref{strong convergence of test function tilde mu} give 
\begin{equation*}
\begin{aligned}
    &\int_{0}^{T^{\prime}}\phi\int_{\Omega}
    \divge{m(\tilde{\varphi}_{\alpha})\nabla\tilde{\mu}_{\alpha}}\left(-\Delta(\varphi_{\alpha}-\tilde{\varphi}_{\alpha})
    +W_{\mathrm{dw}}^{\prime\prime}(\tilde{\varphi}_{\alpha})(\varphi_{\alpha}-\tilde{\varphi}_{\alpha})
    +W_{\mathrm{sg},\alpha}^{\prime\prime}(\tilde{\varphi}_{\alpha})(\varphi_{\alpha}-\tilde{\varphi}_{\alpha})\right)
    \dd{x}\dd{t}\\
    \to&
    \int_{0}^{T^{\prime}}\phi\int_{\Omega}
    \divge{m(\tilde{\varphi})\nabla\tilde{\mu}}\left(-\Delta(\varphi-\tilde{\varphi})
    +W_{\mathrm{dw}}^{\prime\prime}(\tilde{\varphi})(\varphi-\tilde{\varphi})
    \right)
    \dd{x}\dd{t}
\end{aligned}
\end{equation*}
for all test function $\tilde{\varphi}\in C_{0}^{\infty}(\Omega\times[0,T))$ with $\abs{\tilde{\varphi}}\leq1$ also in view of Assumption~\ref{ASM:mobility}-\ref{ASM:Double well potential}. This finishes our proof.
\end{proof}
\begin{remark}\label{rm: on ds with regularized system}
    In the case of the regularized two-phase system~\eqref{sys:two phase} with $\gamma>0$, one can show the existence of dissipative solutions with a double-obstacle potential by following the same arguments as for the proofs of Lemma~\ref{est:alpha uniform} and Theorem~\ref{exitence ds double well}.
\end{remark}
\begin{remark}
\label{Rem-final-conv}
Combining Theorems~\ref{exitence evs double well}, Remark~\ref{rm: on evs with regularized system}, Theorem~\ref{exitence ds double well}, and Remark~\ref{rm: on ds with regularized system} together with Proposition~\ref{relative energy-dissipation inequality for EVS} and Proposition~\ref{relative energy-dissipation inequality for double well}, we arrive at the following commutative diagram: Both for the regularized two-phase system~\eqref{sys:two phase} with $\gamma>0$ and for the non-regularized two-phase systems~\eqref{sys:two phase} with $\gamma=0$, we have the following convergence results for and between the setting of energy-variational solutions (E.V.S) and the setting of dissipative solutions (D.S.):  
\begin{center}
\begin{tikzcd}[column sep=6em, row sep=6em]
\text{(E.V.S.)$_\alpha$} \arrow[r, "\text{Proposition}~\ref{relative energy-dissipation inequality for EVS}"] \arrow[d, "{\alpha \searrow 0}"] & \text{(D.S.)$_\alpha$} \arrow[d,"{\alpha \searrow 0}"] \\
\text{(E.V.S.)} \arrow[r, "\text{Proposition}~\ref{relative energy-dissipation inequality for double well}"] & \text{(D.S.)} 
\end{tikzcd}
\end{center}
We note that the energy-variational solution is  simpler than the dissipative one as it  incorporates less terms and only requires weaker regularity assumptions of the phase-field potential. This can for instance be seen in the limit passage $\alpha\searrow 0$, which requires significantly more care in the dissipative solution framework. 
\end{remark}

\paragraph{Acknowledgements: }F.C.\ and M.T.\ are grateful for the financial support by Deutsche Forschungsgemeinschaft (DFG) within CRC 1114 \emph{Scaling Cascades in Complex Systems}, Project-Number 235221301, Project B09 \emph{Materials with Discontinuities on Many Scales} as well as Project C09 \emph{Dynamics of Rock Dehydration on Multiple Scales}. 

\bibliographystyle{alpha}  
\addcontentsline{toc}{section}{References}
\bibliography{Refs}

@article{zbMATH06286084,
 author = {Abels, Helmut and Depner, Daniel and Garcke, Harald},
 title = {On an incompressible {Navier}-{Stokes}/{Cahn}-{Hilliard} system with degenerate mobility},
 fjournal = {Annales de l'Institut Henri Poincar{\'e}. Analyse Non Lin{\'e}aire},
 journal = {Ann. Inst. Henri Poincar{\'e}, Anal. Non Lin{\'e}aire},
 issn = {0294-1449},
 volume = {30},
 number = {6},
 pages = {1175--1190},
 year = {2013},
 language = {English},
 doi = {10.1016/j.anihpc.2013.01.002},
 keywords = {76T99,35Q30,35Q35,76D03,76D05,76D27,76D45},
 zbMATH = {6286084},
 Zbl = {1347.76052}
}

@article{ALREVS,
 author = {Agosti, Abramo and Lasarzik, Robert and Rocca, Elisabetta},
 title = {Energy-variational solutions for viscoelastic fluid models},
 fjournal = {Advances in Nonlinear Analysis},
 journal = {Adv. Nonlinear Anal.},
 issn = {2191-9496},
 volume = {13},
 pages = {35},
 note = {Id/No 20240056},
 year = {2024},
 language = {English},
 doi = {10.1515/anona-2024-0056},
 keywords = {35Q35,35A15,76A10,35D30},
 zbMATH = {7971344}
}

@misc{arxiv:2509.25508 ,
      title={Analysis of a {Cahn}--{Hilliard} model for viscoelastoplastic two-phase flows}, 
      author={Fan Cheng and Robert Lasarzik and Marita Thomas},
      year={2025},
      eprint={2509.25508},
      archivePrefix={arXiv},
      primaryClass={math.AP},
      url={https://arxiv.org/abs/2509.25508}, 
      note={arXiv preprint: 2509.25508}
}

@article{zbMATH07834723,
 author = {Eiter, Thomas and Lasarzik, Robert},
 title = {Existence of energy-variational solutions to hyperbolic conservation laws},
 fjournal = {Calculus of Variations and Partial Differential Equations},
 journal = {Calc. Var. Partial Differ. Equ.},
 issn = {0944-2669},
 volume = {63},
 number = {4},
 pages = {40},
 note = {Id/No 103},
 year = {2024},
 language = {English},
 doi = {10.1007/s00526-024-02713-9},
 keywords = {35L65,35L45,35A15,35Q31,76B03,76N10},
 zbMATH = {7834723},
 Zbl = {1537.35232}
}

@article{zbMATH06210388,
 author = {Abels, Helmut and Depner, Daniel and Garcke, Harald},
 title = {Existence of weak solutions for a diffuse interface model for two-phase flows of incompressible fluids with different densities},
 fjournal = {Journal of Mathematical Fluid Mechanics},
 journal = {J. Math. Fluid Mech.},
 issn = {1422-6928},
 volume = {15},
 number = {3},
 pages = {453--480},
 year = {2013},
 language = {English},
 doi = {10.1007/s00021-012-0118-x},
 keywords = {76T99,35Q30,35Q35,76D03,76D05,76D27,76D45},
 zbMATH = {6210388},
 Zbl = {1273.76421}
}

@article{zbMATH03399971,
 author = {Mosco, Umberto},
 title = {On the continuity of the {Young}-{Fenchel} transform},
 fjournal = {Journal of Mathematical Analysis and Applications},
 journal = {J. Math. Anal. Appl.},
 issn = {0022-247X},
 volume = {35},
 pages = {518--535},
 year = {1971},
 language = {English},
 doi = {10.1016/0022-247X(71)90200-9},
 keywords = {46E99,46B10},
 zbMATH = {3399971},
 Zbl = {0253.46086}
}

@article{zbMATH03058046,
 author = {Fenchel, W.},
 title = {On conjugate convex functions},
 fjournal = {Canadian Journal of Mathematics},
 journal = {Can. J. Math.},
 issn = {0008-414X},
 volume = {1},
 pages = {73--77},
 year = {1949},
 language = {English},
 doi = {10.4153/CJM-1949-007-x},
 zbMATH = {3058046},
 Zbl = {0038.20902}
}

@article{moresi_mantle_2002,
	title = {Mantle {Convection} {Modeling} with {Viscoelastic}/{Brittle} {Lithosphere}: {Numerical} {Methodology} and {Plate} {Tectonic} {Modeling}},
	volume = {159},
	issn = {1420-9136},
	url = {https://doi.org/10.1007/s00024-002-8738-3},
	doi = {10.1007/s00024-002-8738-3},
	abstract = {—The earth's tectonic plates are strong, viscoelastic shells which make up the outermost part of a thermally convecting, predominantly viscous layer. Brittle failure of the lithosphere occurs when stresses are high. In order to build a realistic simulation of the planet's evolution, the complete viscoelastic/brittle convection system needs to be considered. A particle-in-cell finite element method is demonstrated which can simulate very large deformation viscoelasticity with a strain-dependent yield stress. This is applied to a plate-deformation problem. Numerical accuracy is demonstrated relative to analytic benchmarks, and the characteristics of the method are discussed.},
	number = {10},
	journal = {{Pure} and {Applied} {Geophysics}},
	author = {Moresi, L. and Dufour, F. and Mühlhaus, H.-B.},
	month = aug,
	year = {2002},
	pages = {2335--2356}
}

@article{gerya_robust_2007,
	series = {Computational {Challenges} in the {Earth} {Sciences}},
	title = {Robust characteristics method for modelling multiphase visco-elasto-plastic thermo-mechanical problems},
	volume = {163},
	issn = {0031-9201},
	url = {https://www.sciencedirect.com/science/article/pii/S0031920107000969},
	doi = {10.1016/j.pepi.2007.04.015},
	abstract = {We have extended our previous 2D method [Gerya, T.V., Yuen, D.A., 2003. Characteristics-based marker-in-cell method with conservative finite-differences schemes for modeling geological flows with strongly variable transport properties. Phys. Earth Planet. Interiors 140, 295–320], which is a combination of conservative finite-differences with marker-in-cell techniques to include the effects of visco-elasto-plastic rheology, self-gravitation and a self-consistently derived evolving curvilinear planetary surface. This code is called I2ELVIS and can solve a new class of computationally challenging problems in geodynamics, such as shear localization with large strains, crustal intrusion emplacement of magmas, bending of realistic visco-elasto-plastic plates and core-formation by vigorous shell tectonics activities related to a global Rayleigh–Taylor instability of a metal layer formed around silicate-rich lower density (primordial) core during planetary accretion. We discuss in detail the computational strategy required the rheological constraints to be satisfied at each time step and spatial location. We show analytical benchmarks and examples drawn from comparing between numerical and analogue experiments in structural geology, subducting slab bending with a visco-elasto-plastic rheology and equilibrium spherical configurations from self-gravitation. We have also tested possibilities of future applications by addressing 3D geometries based on multigrid method and including inertial effects in the momentum equation with tracers in order to simulate meteoritic impact events and eventually earthquake instabilities.},
	number = {1},
	urldate = {2025-11-14},
	journal = {Physics of the Earth and Planetary Interiors},
	author = {Gerya, Taras V. and Yuen, David A.},
	month = aug,
	year = {2007},
	keywords = {Free curvilinear surface, Inertial effects, Numerical algorithm, Self-gravitation, Visco-elasto-plasticity},
	pages = {83--105}
}

@article{herrendorfer_invariant_2018,
	title = {An {Invariant} {Rate}‐ and {State}‐{Dependent} {Friction} {Formulation} for {Viscoeastoplastic} {Earthquake} {Cycle} {Simulations}},
	volume = {123},
	issn = {2169-9313, 2169-9356},
	url = {https://agupubs.onlinelibrary.wiley.com/doi/10.1029/2017JB015225},
	doi = {10.1029/2017JB015225},
	abstract = {Abstract
            We present a 2‐D numerical modeling approach for simulating a wide slip spectrum in a viscoelastoplastic continuum. The key new model component is an invariant reformulation of the classical rate‐ and state‐dependent friction equations, which is designed for earthquake simulations along spontaneously evolving faults. Here we describe the methodology and demonstrate that it is accurate and stable in a setup consisting of a mature strike‐slip fault zone. We show that the nucleation and propagation of an earthquake are well resolved, as supported by a good agreement with various analytical approximations, including those of the nucleation and cohesive zone lengths. Results generally converge with respect to grid size, time step, and other numerical parameters. The convergence rate with respect to grid size depends on the internodal averaging scheme, is influenced by wave reflections, and deteriorates for inclined faults. The simulated slip spectrum, ranging from stable sliding at the loading rate to periodic aseismic slip to periodic seismic slip as a function of nucleation size, is in general agreement with the literature. In this simple setup, dynamic pressure does not play a significant role. By analyzing the role of viscous deformation, we identify and confirm by our simulations a theoretical viscosity threshold below which earthquakes cannot nucleate. This threshold is shown to depend on the reference strength of rate‐ and state‐dependent friction and the loading strain rate, which is in agreement with previous work on the brittle‐ductile transition.
          , 
            Key Points
            
              
                
                  We present an invariant rate‐ and state‐dependent friction formulation for simulations in a continuum mechanics framework
                
                
                  Numerical accuracy and stability is demonstrated for seismic slip on a straight fault
                
                
                  Based on our simulations, we derive a viscosity threshold, below which no earthquakes are generated},
	language = {en},
	number = {6},
	urldate = {2025-11-14},
	journal = {Journal of Geophysical Research: Solid Earth},
	author = {Herrendörfer, Robert and Gerya, Taras and Van Dinther, Ylona},
	month = jun,
	year = {2018},
	pages = {5018--5051}
}

@book{zbMATH00928933,
 author = {Lions, Pierre-Louis},
 title = {Mathematical topics in fluid mechanics. {Vol}. 1: {Incompressible} models},
 fseries = {Oxford Lecture Series in Mathematics and its Applications},
 series = {Oxf. Lect. Ser. Math. Appl.},
 volume = {3},
 isbn = {0-19-851487-5},
 year = {1996},
 publisher = {Oxford: Clarendon Press},
 language = {English},
 keywords = {76-02,76D05,76B47,35Q30,35Q35},
 zbMATH = {928933},
 Zbl = {0866.76002}
}

@article{zbMATH05575962,
 author = {Abels, Helmut},
 title = {Existence of weak solutions for a diffuse interface model for viscous, incompressible fluids with general densities},
 fjournal = {Communications in Mathematical Physics},
 journal = {Commun. Math. Phys.},
 issn = {0010-3616},
 volume = {289},
 number = {1},
 pages = {45--73},
 year = {2009},
 language = {English},
 doi = {10.1007/s00220-009-0806-4},
 keywords = {76R50,35Q35},
 zbMATH = {5575962},
 Zbl = {1165.76050}
}

@article{zbMATH07488956,
 author = {Eiter, Thomas and Hopf, Katharina and Mielke, Alexander},
 title = {Leray-Hopf solutions to a viscoelastoplastic fluid model with nonsmooth stress-strain relation},
 fjournal = {Nonlinear Analysis. Real World Applications},
 journal = {Nonlinear Anal., Real World Appl.},
 issn = {1468-1218},
 volume = {65},
 pages = {30},
 note = {Id/No 103491},
 year = {2022},
 language = {English},
 doi = {10.1016/j.nonrwa.2021.103491},
 keywords = {35Q30,76D05,76A10,35Q35,76D03},
 zbMATH = {7488956},
 Zbl = {1481.35310}
}

@article{zbMATH07600529,
 author = {Eiter, Thomas and Hopf, Katharina and Lasarzik, Robert},
 title = {Weak-strong uniqueness and energy-variational solutions for a class of viscoelastoplastic fluid models},
 fjournal = {Advances in Nonlinear Analysis},
 journal = {Adv. Nonlinear Anal.},
 issn = {2191-9496},
 volume = {12},
 pages = {31},
 note = {Id/No 20220274},
 year = {2023},
 language = {English},
 doi = {10.1515/anona-2022-0274},
 keywords = {35Q35,35Q86,35K61,76A10,35A01,35A02,35D30,35D35},
 zbMATH = {7600529},
 Zbl = {1504.35313}
}

@article{zbMATH05960942,
 author = {Colli, Pierluigi and Frigeri, Sergio and Grasselli, Maurizio},
 title = {Global existence of weak solutions to a nonlocal {Cahn}-{Hilliard}-{Navier}-{Stokes} system},
 fjournal = {Journal of Mathematical Analysis and Applications},
 journal = {J. Math. Anal. Appl.},
 issn = {0022-247X},
 volume = {386},
 number = {1},
 pages = {428--444},
 year = {2012},
 language = {English},
 doi = {10.1016/j.jmaa.2011.08.008},
 keywords = {35Q35,35D30,76D05,76D50,35R09},
 zbMATH = {5960942},
 Zbl = {1241.35155}
}

@article{stressdiffusion,
    author = {Málek, Josef and Průša, Vít and Skřivan, Tomáš and Süli, Endre},
    title = {Thermodynamics of viscoelastic rate-type fluids with stress diffusion},
    journal = {Physics of Fluids},
    volume = {30},
    number = {2},
    pages = {023101},
    year = {2018},
    month = {02},
    abstract = {We propose thermodynamically consistent models for viscoelastic fluids with a stress diffusion term. In particular, we derive variants of compressible/incompressible Maxwell/Oldroyd-B models with a stress diffusion term in the evolution equation for the extra stress tensor. It is shown that the stress diffusion term can be interpreted either as a consequence of a nonlocal energy storage mechanism or as a consequence of a nonlocal entropy production mechanism, while different interpretations of the stress diffusion mechanism lead to different evolution equations for the temperature. The benefits of the knowledge of the thermodynamical background of the derived models are documented in the study of nonlinear stability of equilibrium rest states. The derived models open up the possibility to study fully coupled thermomechanical problems involving viscoelastic rate-type fluids with stress diffusion.},
    issn = {1070-6631},
    doi = {10.1063/1.5018172},
    url = {https://doi.org/10.1063/1.5018172},
    eprint = {https://pubs.aip.org/aip/pof/article-pdf/doi/10.1063/1.5018172/13408215/023101_1_online.pdf},
}

@article{zbMATH07680741,
 author = {Lasarzik, Robert and Reiter, Maximilian E. V.},
 title = {Analysis and numerical approximation of energy-variational solutions to the {Ericksen}-{Leslie} equations},
 fjournal = {Acta Applicandae Mathematicae},
 journal = {Acta Appl. Math.},
 issn = {0167-8019},
 volume = {184},
 pages = {44},
 note = {Id/No 11},
 year = {2023},
 language = {English},
 doi = {10.1007/s10440-023-00563-9},
 keywords = {35D30,35A35,35Q35,65M60,76A15},
 zbMATH = {7680741},
 Zbl = {1512.35153}
}

@misc{envarMeas,
 author = {Lasarzik, Robert},
 title = {Energy-variational structure in evolution equations},
 year = {2025},
 howpublished = {Preprint, {arXiv}:2503.11438 [math.{AP}] (2025)},
 keywords = {35D99,35R35,35R45,35Q35,35Q74,76T06},
 url = {https://arxiv.org/abs/2503.11438},
 arXiv = {arXiv:2503.11438}
}

@article{lasarzik,
 author = {Lasarzik, Robert},
 title = {On the existence of energy-variational solutions in the context of multidimensional incompressible fluid dynamics},
 fjournal = {Mathematical Methods in the Applied Sciences},
 journal = {Math. Methods Appl. Sci.},
 issn = {0170-4214},
 volume = {47},
 number = {6},
 pages = {4319--4344},
 year = {2024},
 language = {English},
 doi = {10.1002/mma.9816},
 keywords = {35Q35,35Q30,76D05,35Q31,35D30},
 zbMATH = {7861251},
 Zbl = {1547.35554}
}

@book{zbMATH05681750,
 author = {Evans, Lawrence C.},
 title = {Partial differential equations},
 edition = {2nd ed.},
 fseries = {Graduate Studies in Mathematics},
 series = {Grad. Stud. Math.},
 issn = {1065-7339},
 volume = {19},
 isbn = {978-0-8218-4974-3; 978-1-4704-6942-9; 978-1-4704-1144-2},
 year = {2010},
 publisher = {Providence, RI: American Mathematical Society (AMS)},
 language = {English},
 keywords = {35-01,49-01},
 zbMATH = {5681750},
 Zbl = {1194.35001}
}

@book{zbMATH05948485,
 author = {Barbu, Viorel and Precupanu, Teodor},
 title = {Convexity and optimization in {Banach} spaces.},
 edition = {4th updated and revised ed.},
 fseries = {Springer Monographs in Mathematics},
 series = {Springer Monogr. Math.},
 issn = {1439-7382},
 isbn = {978-94-007-2246-0; 978-94-007-2247-7},
 year = {2012},
 publisher = {Dordrecht: Springer},
 language = {English},
 doi = {10.1007/978-94-007-2247-7},
 keywords = {49-01,49J27,49K27,46A55,49J40,49J45,49N15,49K35,90C25,47H05,49J35},
 zbMATH = {5948485},
 Zbl = {1244.49001}
}

@article{zbMATH05191618,
 author = {Abels, Helmut and Wilke, Mathias},
 title = {Convergence to equilibrium for the {Cahn}-{Hilliard} equation with a logarithmic free energy},
 fjournal = {Nonlinear Analysis. Theory, Methods \& Applications. Series A: Theory and Methods},
 journal = {Nonlinear Anal., Theory Methods Appl., Ser. A, Theory Methods},
 issn = {0362-546X},
 volume = {67},
 number = {11},
 pages = {3176--3193},
 year = {2007},
 language = {English},
 doi = {10.1016/j.na.2006.10.002},
 keywords = {35B40,35K55,35Q99,47H05,47J35,80A22},
 zbMATH = {5191618},
 Zbl = {1121.35018}
}

@book{zbMATH07478418,
 author = {Ciarlet, Philippe G.},
 title = {Mathematical elasticity. {Volume} {I}: {Three}-dimensional elasticity},
 fseries = {Studies in Mathematics and its Applications},
 series = {Stud. Math. Appl.},
 issn = {0168-2024},
 volume = {20},
 isbn = {0-444-70259-8},
 year = {1988},
 publisher = {Amsterdam etc.: North-Holland},
 language = {English},
 keywords = {74B20,74Axx,74S30,74-02,74-01},
 zbMATH = {192847},
 Zbl = {0648.73014}
}

@book{zbMATH05960425,
 author = {Grisvard, Pierre},
 title = {Elliptic problems in nonsmooth domains},
 edition = {Reprint of the 1985 hardback ed.},
 fseries = {Classics in Applied Mathematics},
 series = {Class. Appl. Math.},
 volume = {69},
 isbn = {978-1-611972-02-3},
 year = {2011},
 publisher = {Philadelphia, PA: Society for Industrial {and} Applied Mathematics (SIAM)},
 language = {English},
 keywords = {35-02,35B65,35J25,46E35,47F05},
 zbMATH = {5960425},
 Zbl = {1231.35002}
}

\end{document}